\documentclass[11pt,oneside,reqno]{amsart}
\usepackage{amsmath}
\usepackage{amssymb}
\usepackage{graphicx}
\usepackage{tikz-cd}
\usepackage[T1]{fontenc}
\usepackage[utf8]{inputenc}

\usepackage[colorlinks=true,
            citecolor={red!60!black},
            linkcolor=blue,
            urlcolor=blue]{hyperref}

\usepackage[all]{xy}
\usepackage{xypic}
\usepackage{mathtools}

\usepackage{xspace}
\usepackage{bm}
\usepackage{amsmath}
\usepackage{amstext}
\usepackage{amsfonts}
\usepackage[mathscr]{euscript}
\usepackage{amscd}
\usepackage{latexsym}
\usepackage{amssymb}
\usepackage{enumerate}
\usepackage{xcolor}

\usepackage{anysize}
\marginsize{2.5cm}{2.5cm}{2.5cm}{2.5cm}

\theoremstyle{plain}
    \newtheorem{theorem}{Theorem}[section]
    \newtheorem{proposition}[theorem]{Proposition}
    \newtheorem{lemma}[theorem]{Lemma}

    \newtheorem{corollary}[theorem]{Corollary}
    
    \newtheorem{subsec}[theorem]{}
    
    \newtheorem*{thma}{Theorem A}
    \newtheorem*{thmb}{Theorem B}
    \newtheorem*{thmc}{Theorem C}
    \newtheorem*{thmd}{Theorem D}
    \newtheorem*{thme}{Theorem E}

\theoremstyle{definition}
    \newtheorem{construction}[theorem]{Construction}
    \newtheorem{definition}[theorem]{Definition}
    \newtheorem{example}[theorem]{Example}

\theoremstyle{remark}
        \newtheorem{remark}[theorem]{Remark}

\renewcommand{\thefigure}{\arabic{section}.\arabic{figure}}

\newenvironment{myeq}[1][]
{\stepcounter{figure}\begin{equation}\tag{\thefigure}{#1}}
{\end{equation}}

\newcommand{\hdim}{{\sf {hdim}}}

\newcommand{\secat}{{\sf {secat}}}

\newcommand{\TC}{{\sf {TC}}}

\newcommand{\cat}{{\sf {cat}}}

\newcommand{\RR}{{\mathbb {R}}}

\newcommand{\zcl}{{\sf {zcl}}}
\newcommand{\conn}{{\sf {conn}}}

\newcommand{\eat}[1]{}
\newcommand{\cp}{{\sf {cup}}}

\allowdisplaybreaks

\title[Generalized Milnor Manifold and Their TC]{On Generalized Milnor Manifolds\\
and Their Topological Complexity}

\author{Sarjick Bakshi}	
\address{Department of Mathematics and Statistics, IIT Kanpur, Kanpur 208016, India}
\email{sarjick91@gmail.com}

\author{Manas Mandal}
\address{Department of Mathematics and Statistics, IIT Kanpur, Kanpur 208016, India}
\email{manas.imsc@gmail.com}

\author{Subhankar Sau}
\address{Department of Mathematics and Statistics, IIT Kanpur, Kanpur 208016, India}
\email{subhankarsau18@gmail.com}
	
\subjclass{32J27, 14M15, 55M30}
\keywords{Milnor manifold, Cohomology ring, Flag manifold, K\"ahler manifold, Topological Complexity, Zero divisor cup length}
\thanks{ }

\begin{document}

\begin{abstract}
    We introduce generalized Milnor manifolds (GMM), extending the classical Milnor manifolds over $\mathbb{R}$, $\mathbb{C}$, and $\mathbb{H}$.
    We compute their integral cohomology algebras in the complex and quaternionic cases and their mod-$2$ cohomology algebras in the real case. We compare GMM with partial flag manifolds, investigate when they are homotopy equivalent, and obtain a necessary and sufficient condition in the complex and quaternionic cases. 
    We further prove that complex GMM are K\"ahler manifolds. As an application, we determine their higher topological complexities, obtaining exact values in the complex and quaternionic cases and bounds in the real case. Along the way, for a fibre bundle satisfying the Leray--Hirsch hypothesis, we establish a lower bound for the zero-divisor cup-length of the total space in terms of base and fibre.
\end{abstract}

\maketitle


\section{Introduction}

The classical \emph{Milnor manifolds} were introduced by Milnor \cite{Mil65} in 1965 in his study of the unoriented cobordism ring $\mathfrak{N}_*$.  
These manifolds arise as smooth hypersurfaces of codimension one in products of projective spaces, defined by
    \[
        \mathbb{C}H(m,n):= \big\{ (\mathbf{x},\mathbf{y}) \in \mathbb{C}P^m \times \mathbb{C}P^n \mid \textstyle\sum_{i=0}^{\min\{m,n\}} x_i y_i = 0 \big\}.
    \]
Since their introduction, Milnor manifolds have been studied extensively from various topological and geometric perspectives, see \cite{Mor99, Bro88, LJH22}.

In this paper, we introduce and study a natural generalization of these manifolds, including their real and quaternionic analogues. We call them \textit{generalized Milnor manifolds} (GMM).

Let $\mathbf{n} = (n_1, n_2, \ldots, n_\ell)$ be a weakly increasing sequence of natural numbers and $\mathbb F\in \{\mathbb R, \mathbb C, \mathbb H\}$. The generalized Milnor manifold associated to $\mathbf{n}$, denoted by $\mathbb{F}H(\mathbf{n})$, is defined as:
    \begin{equation*}
    \mathbb{F}H(\mathbf{n}) := 
    \Bigl\{ (\mathbf{x}^1, \mathbf{x}^2, \dots, \mathbf{x}^\ell) \in \prod_{t=1}^\ell \mathbb{F}P^{n_t} \;\Big|\; 
    \sum_{s=0}^{n_i} x^i_s x^j_s = 0 \;\text{for all } 1 \le i < j \le \ell \Bigr\},
    \end{equation*}
where $\mathbb{F}P^{n_t}$ is a $\mathbb F$-projective space of dimension $n_t$, and $\mathbf{x}^t=[x^t_0 : x^t_1 : \dots : x^t_{n_t}]\in\mathbb{F}P^{n_t}$.

Similar to the classical case, we establish the equivalence of two natural constructions of generalized Milnor manifolds: one via defining equations and another via iterated projectivizations (see Proposition~\ref{equivalence of definitions}).  
Using the latter description, we compute the cohomology algebra $H^*(\mathbb{F}H(\mathbf{n}); R)$ with coefficient ring $R=\mathbb{Z}_2$ when $\mathbb{F}=\mathbb{R}$ and $R=\mathbb{Z}$ when $\mathbb{F}=\mathbb{C}$ or $\mathbb{H}$.

\begin{thma}[Proposition~\ref{cohom of GMM}]\label{intro cohom of GMM}
    There is an isomorphism of graded algebras
        \[
        H^*(\mathbb{F}H(\mathbf{n});R)
        \cong 
        R[x_1,\dots,x_\ell]\big/\langle h_{n_1+1}(x_1),\, h_{n_2}(x_1,x_2),\, \dots,\, h_{n_\ell-\ell+2}(x_1,\dots,x_\ell)\rangle,
        \]
    where $\deg(x_i)=\mathrm{dim}_{\mathbb R}(\mathbb F)$ and the isomorphism is given by $c_1(\xi_i) \longmapsto -x_i.$
    Here $h_k(x_1,\dots,x_r)$ denotes the complete homogeneous symmetric polynomial of degree $k$ in the variables $x_1,\dots,x_r$.
\end{thma}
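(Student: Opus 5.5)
The plan is to use the iterated projectivization description from Proposition~\ref{equivalence of definitions}. I would realize $\mathbb{F}H(\mathbf{n})$ as the top of a tower
\[
\mathbb{F}H(\mathbf{n})=X_\ell\to X_{\ell-1}\to\cdots\to X_1=\mathbb{F}P^{n_1}.
\]
Here $X_k=\mathbb{F}H(n_1,\dots,n_k)$, and $X_k\to X_{k-1}$ is the projectivization of a vector bundle $E_k$ of rank $n_k-k+2$. Concretely, $E_k$ is the orthogonal complement, inside the trivial bundle $\underline{\mathbb F}^{n_k+1}$, of the tautological lines $\xi_1,\dots,\xi_{k-1}$ pulled back from the earlier factors (using conjugates where $\mathbb F$ requires it). These lines are mutually orthogonal by the defining equations, which gives
\[
E_k\oplus\xi_1\oplus\cdots\oplus\xi_{k-1}\cong\underline{\mathbb F}^{n_k+1}
\]
after identifying $\bar\xi_i$ with $\xi_i$ up to the sign convention. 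Let $\xi_k$ be the tautological line on $P(E_k)$ and put $x_k=-c_1(\xi_k)$. For $\mathbb F=\mathbb R$ take $w_1$ mod $2$; for $\mathbb H$ use the appropriate degree-$4$ characteristic class, so that $\deg x_k=\dim_{\mathbb R}\mathbb F$.

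I would then induct on $\ell$ using the projective bundle theorem (Leray--Hirsch). It gives
\[
H^*(P(E_k))\cong H^*(X_{k-1})[x_k]\Big/\Big\langle \textstyle\sum_j c_j(E_k)\,x_k^{\,r-j}\Big\rangle,\qquad r=\operatorname{rank}E_k=n_k-k+2,
\]
with the appropriate signs. The Whitney sum formula gives $c(E_k)=\prod_{i<k}(1-x_i)^{-1}$, so $c_j(E_k)=h_j(x_1,\dots,x_{k-1})$ up to sign. Substituting, the relation becomes
\[
\sum_{j=0}^{r}h_j(x_1,\dots,x_{k-1})\,x_k^{\,r-j}=h_r(x_1,\dots,x_k).
\]
Here I use the identity $h_r(x_1,\dots,x_k)=\sum_j h_j(x_1,\dots,x_{k-1})x_k^{r-j}$. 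This yields exactly the relation $h_{n_k-k+2}(x_1,\dots,x_k)$. For $k=1$ it is $h_{n_1+1}(x_1)=x_1^{n_1+1}$, and for $k=2$ it is $h_{n_2}(x_1,x_2)$, matching the statement. Coefficients are $\mathbb Z_2$ in the real case and $\mathbb Z$ otherwise, and Leray--Hirsch applies because the powers of $x_k$ restrict to a basis of $H^*$ of the fibre $\mathbb{F}P^{r-1}$.

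The main obstacle I expect is the bookkeeping of signs and conjugations, especially in the quaternionic case. There $\mathbb H$ is noncommutative, so the orthogonal complement has to be formed with respect to the correct (quaternionic Hermitian) pairing. The projective bundle formula must also be stated with the symplectic Pontryagin/Chern class of quaternionic line bundles, which is additive only for sums, so the Whitney formula should be applied to $E_k\oplus\bigoplus\xi_i$ using the splitting rather than to tensor products. In the complex case the defining equation $\sum x^i_s x^j_s=0$ is bilinear, not Hermitian. The complement is then really $\bar\xi_i^\perp$ in the Hermitian sense, and this conjugation is what produces the sign in $c_1(\xi_i)\mapsto -x_i$. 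I would check the signs on the base case $\mathbb{C}H(m,n)$ against Milnor's classical computation, and verify that the sign choice makes $c(E_k)=\prod(1-x_i)^{-1}$, so that complete homogeneous polynomials rather than signed ones appear.
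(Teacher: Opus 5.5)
Your proposal is correct and is essentially the paper's proof: induction up the tower of Construction~\ref{construction of GMM fibre bundle}, the projective-bundle relation $\sum_j c_j(E_k)x_k^{r-j}=0$, the Whitney formula $c(E_k)=\prod_{i<k}(1-x_i)^{-1}$ from the splitting of the trivial bundle, and the recurrence for $h_r$. Your index $r=n_k-k+2$ is the right one (the paper's displayed relation stops at $n_\ell-\ell+1$, an off-by-one that does not match its own statement), and you are right to use the symplectic class for $\mathbb H$, since $p_1$ of the underlying real bundle is twice a generator over $\mathbb Z$.

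The conjugation worry in your last paragraph is misplaced and should be dropped. In the tower the $E_k$ are Hermitian orthogonal complements, so $\xi_1\oplus\cdots\oplus\xi_{k-1}\oplus E_k$ is trivial with no conjugates, and $c(E_k)=\prod_{i<k}(1-x_i)^{-1}$ holds exactly, not just up to sign. The minus sign in $c_1(\xi_i)\mapsto -x_i$ is only the choice $x_i=-c_1(\xi_i)$. If you really replaced $\xi_i$ by $\bar\xi_i$, you would get $c(E_k)=\prod_{i<k}(1+x_i)^{-1}$ and alternating-sign relations instead of the $h_r$.
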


Our subsequent studies revolve around the following central observation (Proposition \ref{Prop_Kahler}):
    $$\text{\textit{The complex GMM are K\"ahler manifolds.}}$$
Using this observation, first we compare the GMM with partial flag manifolds, and completely characterize when a generalized Milnor manifold is homotopy equivalent to a partial flag manifold in complex and quaternionic cases. We also deal with the real case in Proposition \ref{prop_GMM=flag}.

In our notation, a partial flag manifold $\mathbb FG(\boldsymbol{\mu})$ for a sequence of natural numbers $\boldsymbol{\mu}=(m_1,\ldots,m_\wp)$ is the space consisting of all orthogonal decompositions $\mathbb F^m=V_1\oplus\cdots\oplus V_\wp$, where $\dim_{\mathbb F}(V_i)=m_i$ for $i=1,\ldots,\wp$. Now we present the result for complex and quaternionic cases.
\begin{thmb}[Proposition \ref{GMM neq flag}]
    Let $\mathbf{n} = (n_1, n_2, \ldots, n_\ell)$ and $\boldsymbol{\mu} = (m_1, m_2, \ldots, m_\wp)$ be two weakly increasing sequences of positive numbers and $\mathbb{F} \in \{\mathbb{C}, \mathbb{H}\}$. Then the generalized Milnor manifold $\mathbb{F}H(\mathbf{n})$ is homotopy equivalent to the partial flag manifold $\mathbb{F}G(\boldsymbol{\mu})$ if and only if:
    $$
        \ell = \wp - 1, \quad n_1 = n_2 = \cdots = n_\ell, \quad m_1 = m_2 = \cdots = m_{\wp-1} = 1, \quad \text{and} \quad m_\wp = n_\ell - \ell + 1.
    $$
\end{thmb}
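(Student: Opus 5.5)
The plan is to prove sufficiency by an explicit identification and necessity by comparing integral cohomology rings, using Theorem A for the GMM and the Borel presentation for partial flag manifolds.

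\emph{Sufficiency.} Suppose $n_1=\cdots=n_\ell=n$, $\wp=\ell+1$, $m_1=\cdots=m_\ell=1$ and $m_\wp=n-\ell+1$, so that $m=n+1$. A point of $\mathbb{F}H(\mathbf{n})$ is an $\ell$-tuple of lines in $\mathbb{F}^{n+1}$ that are pairwise orthogonal with respect to the relevant form. Adding the orthogonal complement of their span gives a point of $\mathbb{F}G(1,\dots,1,n-\ell+1)$. To make this a genuine identification, I will use the iterated projectivization description of Proposition~\ref{equivalence of definitions}: both spaces are iterated projective bundles $\mathbb{P}(\xi_1^\perp)$, $\mathbb{P}((\xi_1\oplus\xi_2)^\perp)$, and so on, over $\mathbb{F}P^n$. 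For the complex case this also needs the bilinear (versus Hermitian) form to be handled, which the bundle description already settles. Therefore the two spaces are diffeomorphic, and in particular homotopy equivalent.

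\emph{Necessity.} A homotopy equivalence induces a graded ring isomorphism on $H^*(-;\mathbb{Z})$. Theorem A gives
\[
H^*(\mathbb{F}H(\mathbf{n}))=\mathbb{Z}[x_1,\dots,x_\ell]/\langle h_{n_1+1}(x_1),\dots,h_{n_\ell-\ell+2}(x_1,\dots,x_\ell)\rangle.
\]
For the flag manifold,
\[
H^*(\mathbb{F}G(\boldsymbol{\mu}))=\bigl(\mathbb{Z}[t_1,\dots,t_m]^{S_{m_1}\times\cdots\times S_{m_\wp}}\bigr)/(\text{positive-degree symmetric functions}).
\]
I will compare the following invariants.

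(i) The rank of $H^{d}$, where $d=\dim_\mathbb{R}\mathbb{F}$, is $\ell$ for the GMM and $\wp-1$ for the flag manifold. This forces $\ell=\wp-1$.

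(ii) The top dimension, equivalently the real dimension. For the GMM it is $d\sum_i (n_i-i+1)$. For the flag manifold it is $d\sum_{i<j}m_im_j$.

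(iii) The Euler characteristic, equivalently the total rank. For the GMM it is $\prod_i (n_i-i+2)$. For the flag manifold it is the multinomial $m!/\prod m_i!$.

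(iv) Finer ring structure. The GMM ring is generated in degree $d$. The flag ring is generated in degree $d$ only if at most one $m_i$ exceeds $1$: otherwise the elementary symmetric function $e_2$ of a block of size at least $2$, taken modulo degree-$d$ products, gives a new generator. I will make this precise with rational coefficients. The subring generated by $H^d$ must be everything, and a dimension count of $H^{2d}$ gives $\binom{\ell+1}{2}$ minus the number of relations in degree $2d$, which must equal the number of degree-$2d$ classes of the flag manifold. Working this out should show that at most one $m_i$ is $\ge 2$. After reordering, all $m_i=1$ for $i<\wp$.

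With $\boldsymbol{\mu}=(1,\dots,1,k)$, the flag manifold is $\mathbb{F}H(n',\dots,n')$ with $n'=k+\ell-1$ by the sufficiency part. It remains to show that $H^*(\mathbb{F}H(\mathbf{n}))\cong H^*(\mathbb{F}H(n',\dots,n'))$ forces $n_i=n'$ for all $i$. I will do this with the Poincaré polynomial $\prod_i [n_i-i+2]_{q}$, where $q=t^d$ and $[a]_q=1+q+\cdots+q^{a-1}$. Factoring into cyclotomic polynomials, two such products agree only if the multisets $\{n_i-i+2\}$ coincide. Since $n_i-i+2$ is strictly decreasing when the $n_i$ are equal, and $n_i$ is weakly increasing, one still needs to exclude permutations of these values. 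For that I will use the ring structure: the degree of nilpotency of $x_1$ is $n_1+1$, while in the flag manifold every degree-$d$ class has nilpotency bounded below by the appropriate quantity.

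The main obstacle is the ring-theoretic step (iv) together with this final matching of the multisets. Comparing Poincaré polynomials alone may not distinguish all cases, so I expect to need height (nilpotency) invariants of degree-$d$ classes, or the Kähler/hard Lefschetz structure of Proposition~\ref{Prop_Kahler}, to pin down $n_1=\cdots=n_\ell$. The quaternionic case would follow by the same algebra with degrees multiplied by $4$.
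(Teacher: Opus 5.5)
Your outline follows the paper's route: the rank of $H^d$ gives $\ell=\wp-1$, a degree-$2d$ count forces $m_1=\cdots=m_{\wp-1}=1$, cyclotomic factorization matches the multisets, and heights of degree-$d$ classes fix the ordering. Your packaging of the second step through ``generated in degree $d$'' works. It gives $b_{2d}(\mathbb{F}H(\mathbf n))\le\binom{\ell+1}{2}$. If two blocks have size $\ge 2$, order the Grassmannian tower with such blocks first and last. Then every stage contributes at least one class in degree $2d$ and the first contributes two, so $b_{2d}(\mathbb{F}G(\boldsymbol\mu))\ge\binom{\ell}{2}+\ell+1$.

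The genuine gap is the final step, which you leave as ``bounded below by the appropriate quantity''. It is the only place where ring structure beyond Betti numbers enters, and it cannot be avoided: $\mathbb{F}H(k,k+2)$ and $\mathbb{F}G(1,1,k)$ have the same Poincar\'e polynomial. With $m=\sum m_i$, the statement you need is that every nonzero $x\in H^d(\mathbb{F}G(1^\ell,m-\ell))$ satisfies $x^{m-1}\neq 0$. Given this, the argument closes as follows:
\begin{itemize}
\item A ring isomorphism carries $x_1$, which has $x_1^{n_1+1}=0$, to such a class, so $n_1\ge m-1$.
\item The multiset equality gives $n_1\le m-1$.
\item Put $s_i=n_i-i+2$. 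Since $n_{i+1}\ge n_i$, we have $s_{i+1}\ge s_i-1$. The values $m,m-1,\dots,m-\ell+1$ are distinct, so $s_i=m-i+1$, i.e.\ $n_i=m-1$ for all $i$.
\end{itemize}

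The paper obtains the height bound as follows (Proposition~\ref{height lower bound}). Write $x=\sum_i a_ix_i$ and merge the blocks of $\boldsymbol\mu$ that carry equal coefficients. This gives block sizes $n'_1,\dots,n'_r$ with $r\ge 2$, since equal coefficients throughout would give $x=0$ because $\sum_i x_i=0$. The forgetful map $p\colon\mathbb{F}G(\boldsymbol\mu)\to\mathbb{F}G(n'_1,\dots,n'_r)$ is a flag bundle satisfying Leray--Hirsch, so $p^*$ is injective. Moreover $x=p^*(y)$ for a class $y$ with pairwise distinct coefficients, and $y$ is K\"ahler by Theorem~\ref{kahler class}. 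Hence $x^N\neq 0$ for $N=\dim_{\mathbb F}\mathbb{F}G(n'_1,\dots,n'_r)=\sum_{p<q}n'_pn'_q$. Lemma~\ref{min height}, applied to partitions of $m$ into at least two parts, shows $N\ge m-1$.

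Hard Lefschetz, which you suggest as an alternative, would not do this by itself. It only controls K\"ahler classes, whose height is the full dimension. The classes of minimal height are not K\"ahler: for $\ell\ge 2$, an example is the pullback of the hyperplane class along $\mathbb{F}G(1^\ell,m-\ell)\to\mathbb{F}P^{m-1}$. The reduction to a coarser flag manifold is the missing idea. For $\mathbb{F}=\mathbb{H}$ you must also transfer the bound, for example through the degree-doubling isomorphism between the rational cohomology rings of $\mathbb{C}G(\boldsymbol\mu)$ and $\mathbb{H}G(\boldsymbol\mu)$.
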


On a different note, we construct a family of generalized Milnor manifolds interpolating between two partial flag manifolds of type $\mathbb{F}G(1,\ldots,1,m)$. Then we address the question of maximum possible dimension of a partial flag manifold that admits a certain ``nice'' embedding into a given generalized Milnor manifold $\mathbb FH(\mathbf n)$ (Proposition \ref{Prop_interpolation}).

This K\"ahlerness of complex GMM also leads us to the study of their \textit{topological complexity} ($\TC$). 
In recent years, $\TC$ and related numerical homotopy invariants have attracted considerable attention following the pioneering work of Farber \cite{Far03}. 
Topological complexity is sectional category of certain path space fibrations. The higher topological complexity $\TC_r(X)$ is a higher analogue of Farber's topological complexity introduced by Rudyak \cite{Rud10} that measures the complexity of motion planning between $r$ prescribed states.
Since determining the exact value of $\TC_r$ is generally difficult, one often relies on  bounds using zero-divisor-cup-length in cohomology algebra and dimensional arguments.

In our article, using the K\"ahler structure of GMM and \cite[Corollary 3.15]{BGRT14},
we obtain:

\begin{thmc}[Theorem~\ref{Prop_higher TC of complex GMM}]
    Let $\mathbb{F}H(\mathbf{n})$ be a generalized Milnor manifold of $\mathbb{F}$-dimension $D$ and $r \geq 2$. Then for $\mathbb{F}=\mathbb{C}$ and $\mathbb{H}$,
        $$\TC_r(\mathbb{F}H(\mathbf{n})) = rD.$$
\end{thmc}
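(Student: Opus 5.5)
The plan is to squeeze $\TC_r$ between the zero-divisor cup-length from below and a dimensional bound from above. Let $X=\mathbb{F}H(\mathbf{n})$. It is a closed smooth manifold, realized as an iterated projective bundle: each stage is the projectivization of an $\mathbb F$-vector bundle over the previous stage (Proposition~\ref{equivalence of definitions}). Since the fibres $\mathbb{F}P^k$ are simply connected for $\mathbb F=\mathbb C,\mathbb H$, induction shows that $X$ is simply connected. Its real dimension is $dD$, where $d=\dim_{\mathbb R}\mathbb F\in\{2,4\}$. The standard upper bound $\TC_r(X)\le \frac{r\,\hdim(X)}{\conn(X)+1}$ for $(\conn X)$-connected spaces then applies.

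\textbf{Upper bound.} By Theorem A the cohomology is concentrated in degrees divisible by $d$, and it is torsion free. The iterated bundle is built from $\mathbb{F}P^k$'s, which admit CW structures with cells only in degrees divisible by $d$. Hence $X$ is $(d-1)$-connected: for $\mathbb H$ this follows because each fibre is $3$-connected and the bundles are inductively $3$-connected. The bound becomes $\TC_r(X)\le r\,dD/d=rD$. Alternatively, for $\mathbb C$ one can simply use $\TC_r\le r\cdot\dim_{\mathbb R}/2$ for simply connected spaces.

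\textbf{Lower bound, complex case.} By Proposition~\ref{Prop_Kahler}, $X$ is a compact Kähler manifold of complex dimension $D$. The cited result \cite[Corollary 3.15]{BGRT14} states that for a closed symplectic manifold $M^{2D}$ with $[\omega]^D\ne0$, one has $\zcl_r\ge rD$, via the zero-divisors $1\otimes\cdots\otimes\omega\otimes\cdots\otimes 1-\omega\otimes1\otimes\cdots\otimes1$. This gives $\TC_r(X)\ge rD$, which matches the upper bound.

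\textbf{Quaternionic case.} Here there is no Kähler structure, so I would argue algebraically. Theorem A gives the same presentation as in the complex case, with generators doubled in degree. Consequently $H^*(\mathbb{H}H(\mathbf n);\mathbb Z)\cong H^*(\mathbb{C}H(\mathbf n);\mathbb Z)$ with all degrees doubled. The Kähler class in the complex case is a positive combination $\omega=\sum a_ix_i$ with $\omega^D\ne0$ rationally. This is an algebraic statement, so the same element $\omega=\sum a_ix_i\in H^4$ satisfies $\omega^D\ne0$ in $H^{4D}(\mathbb{H}H(\mathbf n);\mathbb Q)$. The argument of BGRT is purely cohomological: it uses a class $u$ of even degree with $u^D\ne0$ in the top degree $\hdim$, and expands the product of the zero-divisors $(\bar u_j)^{D}$. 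Because $u$ has even degree, the expansion has no sign cancellation, and the term $u^D\otimes\cdots\otimes u^D$ survives. So it transfers verbatim and gives $\zcl_r\ge rD$ over $\mathbb Q$.

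\textbf{Main obstacle.} I expect the hardest step to be this transfer. One must ensure that rational nonvanishing of $\omega^D$ in the complex ring implies the same in the degree-doubled quaternionic ring, which is immediate once the rings are identified as graded-up-to-doubling. One must also check that the BGRT cohomological argument does not use symplecticity beyond $u^D\ne0$ with $u$ of even degree.
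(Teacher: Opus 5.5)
Your proposal is correct and follows essentially the paper's proof. The complex case uses the K\"ahler structure (Proposition~\ref{Prop_Kahler}) with \cite[Corollary 3.15]{BGRT14}, the quaternionic case transports the argument along the degree-doubling isomorphism of the presentations in Theorem~\ref{cohom of GMM}, and the upper bound $rD$ comes from the connectivity/dimension estimate as in Proposition~\ref{Prop_cat of GMM}. Your quaternionic transfer is more explicit than the paper's, which only asserts that the ``cup-lengths'' agree; one small point is that the K\"ahler class a priori has real coefficients, so you should either run the zero-divisor argument over $\mathbb{R}$ or replace it by a rational class $u$ with $u^D\neq 0$, which exists because $u\mapsto u^D$ is a nonzero polynomial map on $H^2$.
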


This result recovers and extends earlier computations of topological complexity for classical complex Milnor manifolds (corresponding to the case $\mathbf{n}=(n_1,n_2)$), as obtained by Daundkar \cite{ND25} and Daundkar--Singh \cite{DS25}. The K\"ahlerness of complex GMM provides a natural and effective framework for studying topological complexity in this broader setting.

The study of topological complexity of generalized real Milnor manifolds presents an additional challenge, as K\"ahler structures are unavailable in this setting. To overcome this difficulty, we rely on the following general observation.

\begin{thmd}[Proposition \ref{Prop_zcl in Leray Hirsch setup}]
    Let $p:E\to B$ be a fibre bundle with fibre $F$ such that $H^*(F;\mathbb K)$ is finite-dimensional and the inclusion of each fibre into $E$ induces a surjection in cohomology with coefficients in a field $\mathbb K$. Then 
        $$\zcl_r(F) + \zcl_r(B) \leq \zcl_r(E).$$
\end{thmd}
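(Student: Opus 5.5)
We would argue via the Leray--Hirsch theorem and an explicit product of zero-divisors. Recall that $\zcl_r(X)$ is the largest $k$ such that there are $u_1,\dots,u_k$ in the kernel of the $r$-fold cup product $\mu_r\colon H^*(X;\mathbb K)^{\otimes r}\to H^*(X;\mathbb K)$ with $u_1\cdots u_k\neq 0$ in $H^*(X;\mathbb K)^{\otimes r}$.

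\emph{Step 1 (module structure).} Since $H^*(F;\mathbb K)$ is finite-dimensional and $i^*\colon H^*(E)\to H^*(F)$ is surjective, we choose homogeneous classes $e_1,\dots,e_N\in H^*(E)$ whose restrictions form a basis of $H^*(F)$. Leray--Hirsch then says that $H^*(E)$ is a free $H^*(B)$-module on $e_1,\dots,e_N$ via $p^*$. In particular $p^*$ is injective, and every class in $H^*(E)$ is uniquely $\sum_j p^*(b_j)e_j$. Tensoring $r$ copies over $\mathbb K$, $H^*(E)^{\otimes r}$ is a free $H^*(B)^{\otimes r}$-module with basis $e_{j_1}\otimes\cdots\otimes e_{j_r}$.

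\emph{Step 2 (choosing zero-divisors).} Let $a_1,\dots,a_k$ realize $\zcl_r(B)$ and $v_1,\dots,v_m$ realize $\zcl_r(F)$.
\begin{itemize}
\item Set $\alpha_s=(p^*)^{\otimes r}(a_s)$. These are zero-divisors for $E$ because $\mu_r\circ (p^*)^{\otimes r}=p^*\circ\mu_r$.
\item Lift each $v_t$ to some $\tilde v_t\in H^*(E)^{\otimes r}$ with $(i^*)^{\otimes r}(\tilde v_t)=v_t$, using surjectivity of $i^*$. This lift need not be a zero-divisor. Correct it by setting $\beta_t=\tilde v_t-1\otimes\cdots\otimes1\otimes \mu_r(\tilde v_t)$. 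Then $\mu_r(\beta_t)=0$, and $(i^*)^{\otimes r}(\beta_t)=v_t-1\otimes\cdots\otimes 1\otimes \mu_r(v_t)=v_t$.
\end{itemize}
In degree $0$ the subtraction is harmless: $v_t$ is a zero-divisor, so its degree-$0$ component vanishes.

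\emph{Step 3 (nonvanishing of the product).} This is the main obstacle: we must show $\alpha_1\cdots\alpha_k\beta_1\cdots\beta_m\neq0$. Let $P=\beta_1\cdots\beta_m$. It restricts to $v_1\cdots v_m\neq0$ in $H^*(F)^{\otimes r}$. Expand $P$ in the free basis:
\[
P=\sum_J c_J\, e_J,\qquad c_J\in H^*(B)^{\otimes r}.
\]
Restricting to the fibre kills all positive-degree parts of the $c_J$. Hence some $c_J$ has nonzero degree-$0$ component, since otherwise the restriction of $P$ would vanish.

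We then multiply by $A=a_1\cdots a_k$, which is nonzero and lies in $H^*(B)^{\otimes r}$. By freeness, $AP=0$ forces $A c_J=0$ for every $J$. To rule this out we use a filtration argument. Choose $J$ such that $c_J$ has a nonzero constant term $\lambda$. Write $c_J=\lambda+(\text{higher})$, and look at the lowest total degree component of $A c_J$ in which $A$ appears. The top-degree component of $A$ multiplied by the top-degree component of $c_J$ could cancel, so we instead filter by base degree. Let $d$ be the minimal degree of a nonzero homogeneous component $A_d$ of $A$. The degree-$d$ component of $A c_J$ is $\lambda A_d\neq0$, because the higher-degree parts of $c_J$ only raise the degree. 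Therefore $A c_J\neq0$, so $AP\neq0$.

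This gives $k+m$ zero-divisors of $E$ with nonzero product, which proves $\zcl_r(F)+\zcl_r(B)\le\zcl_r(E)$.
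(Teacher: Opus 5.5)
Your proof is correct. Its overall architecture matches the paper's: pull back base zero-divisors along $(p^*)^{\otimes r}$, lift fibre zero-divisors to zero-divisors of $E$, and use Leray--Hirsch freeness to see that the product survives. The execution differs in two places.

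For the lifting step, the paper proves a separate lemma, via the snake lemma, that $(i^{\times r})^*\colon \ker\Delta_{r,E}^*\to\ker\Delta_{r,F}^*$ is surjective. Your correction $\beta_t=\tilde v_t-1\otimes\cdots\otimes1\otimes\mu_r(\tilde v_t)$ is an explicit projection onto $\ker\mu_r$ that commutes with $(i^*)^{\otimes r}$, and it settles the same point in one line.

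For non-vanishing, the paper applies Leray--Hirsch directly to the product bundle $F^r\to E^r\to B^r$. It chooses a basis of $H^*(F^r)$ whose first element is $P_F$ and whose first lift is $\tilde f_1\cdots\tilde f_t$, so the product is literally the image of $P_B\otimes P_F\neq 0$ under the Leray--Hirsch isomorphism. This needs no degree bookkeeping, but it tacitly assumes $P_F$ can be taken homogeneous; that is harmless, since the kernel of $\Delta_r^*$ is a graded ideal. You instead tensor the Leray--Hirsch basis of $E\to B$, detect a nonzero constant term $\lambda$ in some coefficient $c_J$ by restricting to the fibre, and finish with the lowest-degree component $\lambda A_d$ of $Ac_J$. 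This avoids choosing a special basis and any homogeneity reduction, at the cost of that short filtration step.
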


This result provides a flexible tool for estimating the zero-divisor cup-length, and hence lower bounds for $\TC_r$, of total spaces in the Leray--Hirsch setting.  One can find the study of topological complexity of these kind of spaces in \cite{BNSS25}, \cite{BDS26} for generalized Bott manifolds and \cite{GGGL16} for partial flag manifolds. Here, Theorem D offers an alternative perspective on such studies, using appropriate fibre bundle structures.
In our article, it leads to the following result on topological complexity of real GMM.
\begin{thme}[Corollary \ref{Cor_tc of real gmm}]
    For a real generalized Milnor manifold $\RR H (\textbf{n})$ of dimension $D$ satisfying $n_i=2^{t_i}+i-1$ where $t_i \geq 0$ for $1 \leq i \leq \ell$, we have
    \begin{align*}
         2D- \ell \leq &\TC_r (\RR H(\textbf{n})) \leq 2D  \quad \text{for } r=2 \text{ and}\\
         &\TC_r(\RR H(\textbf{n}))=rD  \quad \text{for } r \geq3.
    \end{align*}
\end{thme}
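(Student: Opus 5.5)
The plan is to sandwich $\TC_r(\RR H(\mathbf n))$ between the dimensional upper bound and a zero-divisor cup-length lower bound with $\mathbb Z_2$ coefficients. The lower bound comes from applying Theorem D repeatedly along the iterated projectivization tower of Proposition~\ref{equivalence of definitions}.

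\emph{Upper bound.} $\RR H(\mathbf n)$ is a closed connected smooth manifold of dimension $D$, so the standard dimensional bound gives $\TC_r(\RR H(\mathbf n))\le rD$ for every $r\ge 2$. This is the upper bound in both cases.

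\emph{Lower bound via Theorem D.}
\begin{enumerate}
\item By the projectivization description, forgetting the last coordinate gives a fibre bundle
\[
\RR H(n_1,\dots,n_\ell)\longrightarrow \RR H(n_1,\dots,n_{\ell-1}),
\]
whose fibre is the projectivization of an orthogonal complement of rank $n_\ell-\ell+2$. The fibre is therefore $\RR P^{n_\ell-\ell+1}=\RR P^{2^{t_\ell}}$.
\item Proposition~\ref{cohom of GMM} (Theorem A) shows that the total-space cohomology is free over the base cohomology on $1,x_\ell,\dots,x_\ell^{n_\ell-\ell+1}$. Here $x_\ell$ is the first Stiefel--Whitney class of the tautological line bundle, and it restricts to the generator on each fibre. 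Hence the fibre inclusion is surjective in $\mathbb Z_2$-cohomology, and Theorem D applies.
\item Inducting on $\ell$, with base case $\RR H(n_1)=\RR P^{n_1}=\RR P^{2^{t_1}}$, yields
\[
\zcl_r(\RR H(\mathbf n))\ \ge\ \sum_{i=1}^{\ell}\zcl_r\bigl(\RR P^{2^{t_i}}\bigr),
\qquad
D=\sum_{i=1}^{\ell} 2^{t_i}.
\]
\end{enumerate}

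It then remains to feed in the zero-divisor cup-lengths of $\RR P^{m}$ with $m=2^{t}$.

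\emph{Case $r=2$.} The classical computation gives $\zcl_2(\RR P^{m})=2m-1$: the class $(x_1+x_2)^{2m-1}$ equals $\sum_{a+b=2m-1}\binom{2m-1}{a}x_1^ax_2^b$. Every binomial coefficient $\binom{2^{t+1}-1}{a}$ is odd, and after truncation only the terms $x_1^{m-1}x_2^{m}$ and $x_1^{m}x_2^{m-1}$ survive, so the class is nonzero. Summing over $i$ gives $\zcl_2\ge 2D-\ell$.

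\emph{Case $r\ge 3$.} I expect this to be the main obstacle: one needs $\zcl_r(\RR P^{2^t})=r2^t$, i.e. the zero-divisor classes must reach the top class $x_1^m\cdots x_r^m$. I would try the product
\[
(x_1+x_2)^{2m-1}\,(x_1+x_3)^{e_3}\cdots(x_1+x_r)^{e_r},
\]
possibly also using factors $x_i+x_j$ with $i,j\ge 2$. The exponents should be chosen so that the total degree is $rm$, and the coefficient of $x_1^m\cdots x_r^m$ should be checked to be odd via Lucas' theorem, using that $2^{t+1}-1$ has all binary digits equal to $1$. If a direct choice proves fiddly, I would instead invoke the known computation of higher zero-divisor cup-length of real projective spaces (Davis; Cohen--Farber), which gives $\TC_r(\RR P^{2^t})=r2^t=\zcl_r$ for $r\ge 3$. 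Summing then gives $\zcl_r(\RR H(\mathbf n))\ge rD$, and since $\TC_r\ge\zcl_r$, this matches the upper bound, giving $\TC_r(\RR H(\mathbf n))=rD$ for $r\ge 3$ and $2D-\ell\le\TC_2(\RR H(\mathbf n))\le 2D$.
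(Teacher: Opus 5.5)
Your argument follows the paper's proof: the dimensional upper bound, Theorem D applied along the projectivization tower with fibres $\RR P^{n_i-i+1}=\RR P^{2^{t_i}}$ (so $D=\sum_i 2^{t_i}$), and the values of $\zcl_r(\RR P^{2^t})$ from Davis's formula. Your direct check that $\zcl_2(\RR P^{2^t})=2^{t+1}-1$ is correct for every $t\ge 0$, so the $r=2$ bounds are fine.

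\textbf{Gap in the $r\ge 3$ step.} The input $\zcl_r(\RR P^{2^t})=r2^t$ fails when $t=0$, and the paper's proof makes the same slip. Davis's formula with $s=0$ gives
\[
\zcl_r(\RR P^{2^t})=\min\{r2^t,\,(r-1)(2^{t+1}-1)\}.
\]
The difference $(r-1)(2^{t+1}-1)-r2^t=(r-2)2^t-(r-1)$ equals $-1$ at $t=0$. Indeed $\RR P^1=S^1$ has $\zcl_r=\TC_r=r-1$, not $r$. So if $z$ denotes the number of indices with $t_i=0$, your sum only yields $\zcl_r\ge rD-z$. For $\ell=1$, $n_1=1$ the asserted equality $\TC_r(\RR P^1)=r$ is actually false. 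The $r\ge 3$ claim therefore needs $t_i\ge 1$ for all $i$.

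\textbf{Completing the argument when $t_i\ge 1$.} Under that hypothesis your plan closes, and your direct attempt can be finished. Take $m=2^t$ with $t\ge 1$. In $H^*(\RR P^m;\mathbb Z_2)^{\otimes 3}$ only one monomial survives truncation, giving
\[
(x_1+x_2)^{2m-1}(x_1+x_3)^{m+1}=\tbinom{2m-1}{m-1}(m+1)\,x_1^mx_2^mx_3^m\neq 0.
\]
This is nonzero because $\binom{2m-1}{m-1}$ is odd and $m$ is even; the evenness of $m$ is exactly what breaks at $t=0$. To pass to all $r\ge 3$, use the general inequality $\zcl_{r+1}(X)\ge \zcl_r(X)+\cp(X)$. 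To prove it, multiply $u\otimes 1$ by $\prod_j(y_j\otimes 1\otimes\cdots\otimes 1+1\otimes\cdots\otimes 1\otimes y_j)$ and read off the component of top degree in the last tensor factor. This gives $\zcl_r(\RR P^m)\ge rm$ for all $r\ge 3$.
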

 This improves the lower bounds for the topological complexity of certain classical real Milnor manifolds obtained in \cite[Theorem 2.9(2)]{DS25}.

\noindent \textbf{Organization of the paper.} 
In Section \ref{Sec_GMM}, we present two different constructions of generalized Milnor manifolds and establish their equivalence. We then obtain Borel-type presentations of their cohomology rings. Next, we prove that generalized complex Milnor manifolds are K\"ahler manifolds.
Section \ref{Sec_GMM and Flag mfld} is devoted to a comparison of the generalized Milnor manifold with partial flag manifolds. 
We construct a family of generalized Milnor manifolds interpolating between two partial flag manifolds in Section \ref{Sec_Interpolation}. 
Finally, in Section \ref{Sec_TC}, we focus on computing their higher topological complexities.


\section{Generalized Milnor Manifolds: Constructions and Basic Properties}\label{Sec_GMM}

In this section, we first recall classical Milnor manifolds from \cite{Mil65} and then introduce a natural generalization of them. We present two equivalent constructions of these manifolds and subsequently describe their cohomology algebras.

Let $\mathbb{F}$ denote the field $\mathbb{R}$ or $\mathbb{C}$ or the division algebra $\mathbb{H}$, and let $m,n$ be positive integers with $m\le n$. The classical \emph{Milnor manifold} is defined by    
   \begin{equation}\label{Eq_classical MM}
        \mathbb{F}H(m,n)
        \coloneqq
        \bigl\{ (\mathbf{x},\mathbf{y}) \in \mathbb{F}P^m \times \mathbb{F}P^n 
        \;\big|\; 
        \sum_{i=0}^{m} x_i\, y_i = 0
        \bigr\},
    \end{equation}
where $\mathbf{x}=[x_0:x_1:\cdots:x_m]\in\mathbb{F}P^m$ and 
$\mathbf{y}=[y_0:y_1:\cdots:y_n]\in\mathbb{F}P^n$.

In the case $\mathbb{F} = \mathbb{C}$, an alternative definition frequently appears in the literature, given by 
    \[
        \sum_{i=1}^{\min\{m,n\}} x_i \bar{y_i} = 0,
    \]
where $[x_0:x_1:\cdots:x_m]\xmapsto{\hspace{0.5em} \sigma \hspace{0.5em}}[\bar x_0:\bar x_1:\cdots:\bar x_m]$ is the standard conjugation on a complex projective space.
This defines a space homeomorphic to $\mathbb{C}H(m,n)$ via the map $\mathrm{id} \times \sigma : \mathbb{C}P^m \times \mathbb{C}P^n \to \mathbb{C}P^m \times \mathbb{C}P^n$. 

The classical Milnor manifold $\mathbb{F}H(m,n)$ can be described equivalently as the projectivization
    \[
        \mathbb{F}H(m,n) \cong \mathbb{P}\big(\xi^\perp \oplus \varepsilon_{\mathbb{F}}^{\,n-m}\big),
    \]
where $\xi$ denotes the canonical $\mathbb{F}$-line bundle over $\mathbb{F}P^m$ and $\xi^\perp$ is its orthogonal complement in the trivial bundle $\varepsilon_{\mathbb{F}}^{\,m+1}$. In particular, $\mathbb{F}H(m,n)$ is the total space of the fibre bundle
   \begin{equation}\label{Eq_classical MM fibre}
        \begin{tikzcd}
        	{\mathbb{F}P^{n-1}} & {\mathbb{F}H(m,n)} & {\mathbb{F}P^{m}}
        	\arrow["\iota", hook, from=1-1, to=1-2]
        	\arrow["p", two heads, from=1-2, to=1-3]
        \end{tikzcd}
    \end{equation}
where $p$ is the natural projection defined by $(\mathbf x, \mathbf y)\mapsto \mathbf x.$

We now introduce two natural generalizations of the definitions of Milnor manifold as in \eqref{Eq_classical MM} and \eqref{Eq_classical MM fibre}. Then we show that these two constructions are homeomorphic. 

Unless otherwise stated, by a \emph{sequence} $\mathbf{n}$ we mean a weakly increasing finite sequence of natural numbers
$\mathbf{n}=(n_1,n_2,\ldots,n_\ell)$.
Such sequences will be denoted by bold symbols, for example $\mathbf{n}$, $\mathbf{m}$, $\boldsymbol{\nu}$, and $\boldsymbol{\mu}$. Throughout this article, we generally use $\mathbf{m}$ and $\mathbf{n}$ in the context of generalized Milnor manifolds, and $\boldsymbol{\mu}$ and $\boldsymbol{\nu}$ in the context of partial flag manifolds. Whenever an entry $n_i$ appears consecutively $t$ times in a sequence, we write $n_i^t$ to denote this block of repeated entries.

\begin{definition}\label{defn of GMM equations}
    The \emph{generalized Milnor manifold} (GMM), denoted by $\mathbb{F}H(\mathbf{n})$ for a weakly increasing finite sequence $\mathbf{n}=(n_1,n_2,\ldots,n_\ell)$, is defined as
        \begin{equation}\label{GMM defn}
            \mathbb{F}H(\mathbf{n}) := 
            \Bigl\{ (\mathbf{x}^1, \mathbf{x}^2, \dots, \mathbf{x}^\ell) \in \prod_{t=1}^\ell \mathbb{F}P^{n_t} \;\Big|\; 
            \sum_{s=0}^{n_i} x^i_s x^j_s = 0 \;\text{for all } 1 \le i < j \le \ell \Bigr\},
        \end{equation}
    where $\prod_{t=1}^\ell \mathbb{F}P^{n_t}$ denotes the product of the projective spaces and $\mathbf{x}^t=[x^t_0 : x^t_1 : \dots : x^t_{n_t}]$ represents a point in $\mathbb{F}P^{n_t}$.
\end{definition}

\begin{remark}
    If $\ell>n_{\ell}+1$, then $\mathbb{F}H(\mathbf n)$ is empty, since it is impossible to choose pairwise nonzero orthogonal vectors $\mathbf x^{1},\dots,\mathbf x^{\ell+1}$ in $\mathbb F^{n_{\ell}+1}$. \qed
\end{remark}

We now describe an alternative construction of the generalized Milnor manifold $\mathbb{F}H(\mathbf{n})$ as an iterated projectivization of vector bundles.

\begin{construction}\label{construction of GMM fibre bundle}
    Consider the following diagram of iterated projectivizations:
    \[\begin{tikzcd}[row sep=1ex]
    	& {E_{\ell-1}} & \cdots & {E_2} & {E_1} & {E_0} \\
    	{B_{\ell}} & {B_{\ell-1}} & \cdots & {B_2} & {B_1} & {B_0=\{*\}}
    	\arrow[from=1-2, to=2-2]
    	\arrow[from=1-4, to=2-4]
    	\arrow[from=1-5, to=2-5]
    	\arrow[from=1-6, to=2-6]
    	\arrow["{p_{\ell}}", two heads, from=2-1, to=2-2]
    	\arrow["{p_{\ell-1}}", two heads, from=2-2, to=2-3]
    	\arrow["{p_3}", two heads, from=2-3, to=2-4]
    	\arrow["{p_2}", two heads, from=2-4, to=2-5]
    	\arrow["{p_1}", two heads, from=2-5, to=2-6]
    \end{tikzcd}\]
    where we set \begin{enumerate}[(i)]
        \item  $B_0=\{*\}$, a one-point space, and $E_0=\mathbb{F}^{n_1+1}$;
    
        \item $E_{i} \coloneqq \xi_{i}^{\perp} \oplus \varepsilon_{\mathbb{F}}^{n_{i+1}-n_{i}}$ for $i=1,2,\ldots,\ell-1$;
    
        \item $B_{i}:=\mathbb FP(E_{i-1})$, projectivization of the bundle $E_{i-1}$, for $i=1,2,\ldots,\ell$.
    \end{enumerate}
\end{construction}

For $\ell=2$, the space $B_2 \cong \mathbb{F}H(n_1,n_2)$ recovers the classical Milnor manifold. More generally, the following proposition shows that the above construction yields the generalized Milnor manifold defined in Definition~\ref{defn of GMM equations}.

\begin{proposition}\label{equivalence of definitions}
    The space $B_\ell$ constructed in Construction~\ref{construction of GMM fibre bundle} is homeomorphic to the generalized Milnor manifold $\mathbb{F}H(\mathbf{n})$ defined in Definition~\ref{defn of GMM equations}.
\end{proposition}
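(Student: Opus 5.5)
Throughout, I read the orthogonality in Definition~\ref{defn of GMM equations} as orthogonality with respect to the pairing that makes $\xi_i^\perp$ meaningful. For $\mathbb F=\mathbb C,\mathbb H$ this means replacing $\mathbf x^j$ by its conjugate, via the homeomorphism $\mathrm{id}\times\sigma$ discussed after \eqref{Eq_classical MM}. Here the $j$-th coordinate vector is padded by zeros to length $n_j+1$, or truncated to the first $n_i+1$ entries when $n_i<n_j$ (the sum runs only to $n_i$).

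The construction also needs a definition of $\xi_i$. On $B_i=\mathbb FP(E_{i-1})$, $\xi_i$ is the tautological line bundle, and $E_{i-1}$ sits inside the trivial bundle $\varepsilon^{n_i+1}_{\mathbb F}$. Hence $\xi_i\subset\varepsilon^{n_i+1}$ and $\xi_i^\perp$ is its complement in $\varepsilon^{n_i+1}$. We embed $\varepsilon^{n_i+1}\subset\varepsilon^{n_{i+1}+1}$ as the first $n_i+1$ coordinates, so $E_i=\xi_i^\perp\oplus\varepsilon^{n_{i+1}-n_i}\subset\varepsilon^{n_{i+1}+1}$, with the extra summand spanned by the last coordinates.

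The plan is induction on $\ell$. We define maps $\Phi_\ell:B_\ell\to\prod_{t=1}^\ell\mathbb FP^{n_t}$ and prove each is a homeomorphism onto $\mathbb FH(n_1,\dots,n_\ell)$.

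Start with $\ell=1$: $B_1=\mathbb FP(\mathbb F^{n_1+1})=\mathbb FP^{n_1}=\mathbb FH(n_1)$, which holds vacuously.

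For the inductive step, a point of $B_{\ell}$ is a pair $(b,L)$ with $b\in B_{\ell-1}$ and $L$ a line in the fibre $(E_{\ell-1})_b\subset\mathbb F^{n_\ell+1}$. We set $\Phi_\ell(b,L)=(\Phi_{\ell-1}(b),[L])$. The key step is unwinding the fibre $(E_{\ell-1})_b$ inductively. I claim that the subspace of $\mathbb F^{n_\ell+1}$ consisting of vectors whose truncation to the first $n_i+1$ coordinates is orthogonal to $\mathbf x^i$ for every $i<\ell$ equals
\[
(E_{\ell-1})_b=\mathbf x^{\ell-1}{}^{\perp}\oplus\mathbb F^{n_\ell-n_{\ell-1}}.
\]
Here $\mathbf x^{\ell-1}{}^{\perp}$ is taken inside $(E_{\ell-2})_{b'}$.

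To prove the claim, induct once more. A vector $v\in\mathbb F^{n_\ell+1}$ splits as $v=(v',v'')$ with $v'\in\mathbb F^{n_{\ell-1}+1}$. The condition against $\mathbf x^{\ell-1}$ involves only $v'$. Since $n_i\le n_{\ell-1}$, the conditions against the earlier $\mathbf x^i$ also involve only $v'$. These earlier conditions say precisely that $v'\in(E_{\ell-2})_{b'}$ by induction, and then orthogonality to $\mathbf x^{\ell-1}$ (a vector lying in $(E_{\ell-2})_{b'}$) cuts this down to the complement of $\xi_{\ell-1}$. The coordinates $v''$ are free. The weak monotonicity of $\mathbf n$ is used exactly here.

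Given the claim, $\Phi_\ell$ is a continuous bijection onto $\mathbb FH(\mathbf n)$. Continuity holds because everything is a subbundle of a trivial bundle and the map to $\mathbb FP^{n_\ell}$ is the classifying map of the tautological line inside $\varepsilon^{n_\ell+1}$. Injectivity and the image description follow from the fibrewise description.

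$B_\ell$ is compact, being an iterated projective bundle over a point, and $\mathbb FH(\mathbf n)$ is Hausdorff. So $\Phi_\ell$ is a homeomorphism.

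The main obstacle is the bookkeeping in the claim: checking that the iterated orthogonal complements taken in nested ambient spaces $\mathbb F^{n_1+1}\subset\cdots\subset\mathbb F^{n_\ell+1}$ match the truncated pairwise orthogonality conditions. A further point is that $\xi_i^\perp$ has constant rank, so that $E_i$ is an honest vector bundle. This holds since the rank is $n_i+1-i$, and the fibre is empty exactly when $\ell>n_\ell+1$, consistent with the earlier Remark. Also in the quaternionic case the conjugation and left/right scalar conventions must be fixed consistently, so that the defining equation is invariant under the projective rescaling.
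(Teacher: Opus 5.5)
Your proposal is correct and follows the same route as the paper: nested ambient spaces $\mathbb F^{n_1+1}\subset\cdots\subset\mathbb F^{n_\ell+1}$, an inductive identification of $(E_{\ell-1})_b$ with the vectors orthogonal to all previously chosen lines, and a continuous bijection from the compact $B_\ell$ onto the Hausdorff $\mathbb FH(\mathbf n)$; the one fix is that $\xi_i^\perp$ must be the complement of $\xi_i$ in $p_i^*E_{i-1}$ (rank $n_i-i+1$, which is what your claim actually uses), not in $\varepsilon_{\mathbb F}^{n_i+1}$ as your set-up sentence says. Your Hermitian reading of the equations is genuinely necessary for $\mathbb F=\mathbb C,\mathbb H$, a point the paper's proof glosses over, but for $\ell\ge 3$ it is a correction of the definition rather than a consequence of $\mathrm{id}\times\sigma$: no choice of conjugated factors makes all pairwise equations Hermitian at once, and the literal bilinear equations admit isotropic solutions such as $\mathbf x^1=\mathbf x^2$ (for $\mathbf n=(2,2,2)$ over $\mathbb C$ the resulting variety has Euler characteristic $8$, whereas $B_3$ has $6$).
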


\begin{proof}
First, we observe that any point $\mathbf{x}_\ell\in B_\ell$ uniquely determines points 
$\mathbf{x}_i\in B_i$, $i=1,2,\ldots,\ell$, such that 
$\mathbf{x}_i\in F_{\mathbf{x}_{i-1}}$, where $F_b$ denotes the fibre over the point
$b$ of the appropriate projective bundle in the definition of $B_\ell$.
Let $\{e_i\}_{i=1}^\ell$ be the standard basis of $\mathbb{F}^{n_\ell+1}$, and
identify $\mathbb{F}^{n_i+1}$ with the subspace of $\mathbb{F}^{n_\ell+1}$
generated by $\{e_j\}_{j=1}^{i+1}$.

Note that $\mathbf{x}_1\in B_1=\mathbb{F}P^{n_1}$ is an $\mathbb{F}$-line in
$\mathbb{F}^{n_1+1}$, $\mathbf{x}_2$ is an $\mathbb{F}$-line in
$\mathbb{F}^{n_2+1}$ which is orthogonal to the $\mathbb{F}$-line
$\mathbf{x}_1\subseteq \mathbb{F}^{n_1+1}\subseteq \mathbb{F}^{n_2+1}$, and
$\mathbf{x}_3$ is an $\mathbb{F}$-line in $\mathbb{F}^{n_3+1}$ orthogonal to both
$\mathbf{x}_1$ and $\mathbf{x}_2$.
Proceeding similarly, $\mathbf{x}_\ell$ determines a unique sequence of
$\mathbb{F}$-lines $\mathbf{x}_i\subset \mathbb{F}^{n_i+1}$ for
$i=1,2,\ldots,\ell$, such that $\mathbf{x}_i\perp \mathbf{x}_j$ for $i\neq j$.
These orthogonality conditions can alternatively be expressed by 
    \[
    \sum_{s=0}^{\min\{n_i,n_j\}} x_s^i x_s^j = 0
    \quad \text{for } i\neq j .
    \]
This gives a continuous bijective correspondence between $B_\ell$ and
$\mathbb{F}H(\mathbf n)$. Since $B_\ell$ is compact and
$\mathbb{F}H(\mathbf n)$ is Hausdorff, the correspondence is a homeomorphism.
\end{proof}

It follows from the iterated projective bundle construction that the generalized Milnor manifold $\mathbb{F}H(\mathbf{n})$ has $\mathbb{F}$-dimension
   \begin{equation}\label{Eq_dimension of GMM}
        D \coloneqq \sum_{j=1}^{\ell} n_j - \frac{\ell(\ell-1)}{2}.
    \end{equation}
We will use the notation $D$ for this quantity throughout the article. In particular, the real dimension of $\mathbb{F}H(\mathbf{n})$ is $d_{\mathbb{F}} \cdot D$, where $d_{\mathbb{F}} = \dim_{\mathbb{R}} \mathbb{F}$.

Moreover, for each $i=1,\dots,\ell$, the space $B_i$ appearing in Construction~\ref{construction of GMM fibre bundle} is itself a generalized Milnor manifold corresponding to the finite sequence $(n_1,\dots,n_i)$. 

\begin{remark}\label{remarks}
\begin{enumerate}[(i)]

\item If $n=n_1=\cdots=n_\ell$ and $\ell \le n$, then the generalized Milnor manifold $\mathbb{F}H(\mathbf{n})$ is homeomorphic to the almost complete flag manifold $\mathbb{F}G(\boldsymbol{\nu})$ of type $\boldsymbol{\nu} = (1^\ell,\, n-\ell+1),$ which consists of all orthogonal decompositions of $\mathbb F^{n+1}$ into $\ell $ many $\mathbb F$-lines and a subspace of dimension $n-\ell+1.$

\item If $n_k < n_{k+1}$, then the $\mathbb{F}P^{\,n_{k+1}-1}$-bundle 
$p_k : B_{k+1} \to B_k$ admits a global section. Indeed, since the trivial bundle 
$\varepsilon_{\mathbb{F}}^{\,n_{k+1}-n_k}$ has positive rank, any nonzero vector 
$v \in \mathbb{F}^{\,n_{k+1}-n_k}$ defines a section $s_{[v]} : B_k \to B_{k+1}$ given by
    \[
    s_{[v]}(b) = [v] \in 
    \mathbb{F}P(F_b)
    = \mathbb{F}P\big((\xi_k^\perp)_b \oplus \mathbb{F}^{\,n_{k+1}-n_k}\big)
    \cong \mathbb{F}P^{\,n_{k+1}-1},
    \quad b \in B_k,
    \]
where $F_b$ denotes the fibre over $b$.

\item Iterating the homotopy long exact sequence for the fiber bundle 
$p_k : B_{k+1} \to B_k$ (see Construction~\ref{construction of GMM fibre bundle}), we obtain
    \[
    \pi_1\big(\mathbb{F}H(\mathbf{n})\big) =
    \begin{cases}
    \mathbb{Z}_2^{\,\ell}, & \text{if } \mathbb{F} = \mathbb{R},\\
    0, & \text{if } \mathbb{F} = \mathbb{C}, \mathbb{H}.
    \end{cases}
    \]\qed
\end{enumerate}
\end{remark}

A generalized Milnor manifold $\mathbb{F}H(\mathbf n)$ admits several natural fibre bundle structures in which both the base and the fibre are themselves generalized Milnor manifolds. To describe such fibre bundle structures, let $\mathbf n_b=(n_1,n_2,\ldots,n_b)$ be an initial subsequence of the weakly increasing sequence $\mathbf n=(n_1,n_2,\ldots,n_\ell)$. Associated to $\mathbf n_b$, we have the generalized Milnor manifold $\mathbb{F}H(\mathbf n_b)$.

We now construct a fibre bundle structure on $\mathbb{F}H(\mathbf n)$ with base $\mathbb{F}H(\mathbf n_b)$. Using Definition~\ref{defn of GMM equations}, for any $\mathbf x=(\mathbf x^1,\mathbf x^2,\ldots,\mathbf x^b)\in \mathbb{F}H(\mathbf n_b)$, define a subspace $F_{\mathbf x}$ of $\mathbb FH(\mathbf n)$ as
\begin{align*}
    F_{\mathbf x}:=&\Big\{(\mathbf{y}^1, \mathbf{y}^2, \dots, \mathbf{y}^\ell)\in \mathbb FH(\mathbf n)\;\mid \; \mathbf x^i=\mathbf y^i\in \mathbb FP^{n_i}\text{ for }i=1,2,\ldots, b \Big\}\\
    =&\Big\{(\mathbf{y}^1, \mathbf{y}^2, \dots, \mathbf{y}^\ell)\in\prod_{t=1}^\ell \mathbb FP^{n_t}\;\mid \; \mathbf y^i\perp\mathbf y^j\text{ for }1\le i<j\le \ell \text{ and } \mathbf x^k=\mathbf y^k\text{ for }1\le k\le b \Big\}\\
    =&\Big\{(\mathbf y^{b+1},\mathbf y^{b+2},\ldots,\mathbf y^\ell)\in \prod_{t=b+1}^\ell \mathbb FP^{n_t}\;\mid \; \mathbf y^i\perp\mathbf y^j ,\; \mathbf y^i\perp\big(\bigoplus_{k=1}^{b}\mathbf x^k\big)\text{ for }b< i<j\le \ell\Big\},
\end{align*} which is homeomorphic to
\begin{align*}
    \mathbb FH(\mathbf n_f)=\Big\{(\mathbf y^{b+1},\mathbf y^{b+2},\ldots,\mathbf y^\ell)\in \prod_{t=b+1}^\ell \mathbb FP^{n_t-b}\;\mid \; \mathbf y^i\perp\mathbf y^j \text{ for }b< i<j\le \ell\Big\},
\end{align*} where the sequence $\mathbf n_f=(n_{b+1}-b, n_{b+2}-b,\ldots, n_\ell-b).$
This gives the fibre bundle structure
    \begin{equation}\label{GMM bundle over GMM}
        \mathbb FH(\mathbf n_f)\hookrightarrow \mathbb FH(\mathbf n)\twoheadrightarrow \mathbb FH(\mathbf n_b).
    \end{equation}

Replacing $\mathbf n$ by $\mathbf n_f$ or $\mathbf{n}_b$, we can obtain  fibre bundle structures on $\mathbb{F}H(\mathbf n_f)$ and $\mathbb{F}H(\mathbf n_b)$ analogous to \eqref{GMM bundle over GMM}, with both the fibre and the base being generalized Milnor manifolds. In this way, iterating this construction yields the following proposition.

\begin{proposition}\label{Prop_GMM as iterated fibre bundle}
    Let $\mathbb{F}H(\mathbf n)$ be the generalized Milnor manifold associated to a weakly increasing sequence
    $\mathbf n=(n_1,n_2,\ldots,n_\ell)$, and let
    $\lambda=(\lambda_1,\lambda_2,\ldots,\lambda_l)$ be a partition of $\ell$.
    Then $\mathbb{F}H(\mathbf n)$ can be expressed as an iterated fibre bundle, with each fibre a generalized Milnor manifold, as follows
\[\begin{tikzcd}
	{\mathbb FH(\mathbf n)=\mathbb FH(\mathbf n_{\lambda_l})} & {\mathbb FH(\mathbf n_{\lambda_{l-1}})} & \cdots & {\mathbb FH(\mathbf n_{\lambda_2})} & {\mathbb FH(\mathbf n_{\lambda_1})} & \{*\}
	\arrow[two heads, from=1-1, to=1-2]
	\arrow[two heads, from=1-2, to=1-3]
	\arrow[two heads, from=1-3, to=1-4]
	\arrow[two heads, from=1-4, to=1-5]
    \arrow[two heads, from=1-5, to=1-6],
\end{tikzcd}\]
    where $\mathbb{F}H(\mathbf n_{\lambda_i})$ denotes the total space of a fibre bundle over $\mathbb{F}H(\mathbf n_{\lambda_{i-1}})$, whose fibre is given by
\[
\mathbb FH\big(n_{\lambda_{i-1}}-\theta_i,\; n_{\lambda_{i-1}+1}-\theta_i,\; \ldots\;,\; n_{\lambda_i}-\theta_i\big),\quad \theta_i:= \sum_{t=1}^i\lambda_i.
\]
\end{proposition}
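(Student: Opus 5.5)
The plan is induction on the number $l$ of parts of $\lambda$, using the one-step fibration \eqref{GMM bundle over GMM} at each stage. First I fix the indexing, which I read as follows. Set $\theta_0=0$ and $\theta_i=\lambda_1+\cdots+\lambda_i$, so that $\theta_l=\ell$. Write $\mathbf n_{\lambda_i}$ for the initial subsequence $(n_1,\dots,n_{\theta_i})$. Then the fibre over $\mathbb FH(\mathbf n_{\lambda_{i-1}})$ should be the GMM of the sequence $(n_{\theta_{i-1}+1}-\theta_{i-1},\dots,n_{\theta_i}-\theta_{i-1})$. This is exactly the sequence $\mathbf n_f$ of \eqref{GMM bundle over GMM} with $b=\theta_{i-1}$, applied to the truncated sequence $(n_1,\dots,n_{\theta_i})$. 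Each such initial or shifted subsequence is again weakly increasing, so every space in the tower is a GMM.

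The base case $l=1$ is trivial: $\mathbb FH(\mathbf n)\to\{*\}$ has fibre $\mathbb FH(\mathbf n)$. For the inductive step, put $b=\theta_{l-1}$. Apply \eqref{GMM bundle over GMM} to get $\mathbb FH(\mathbf n_f)\hookrightarrow\mathbb FH(\mathbf n)\twoheadrightarrow\mathbb FH(\mathbf n_b)$, with fibre $\mathbb FH(n_{b+1}-b,\dots,n_\ell-b)$ as required. Then apply the induction hypothesis to $\mathbb FH(\mathbf n_b)$ with the partition $(\lambda_1,\dots,\lambda_{l-1})$ of $b$. Concatenating the two towers gives the claimed iterated bundle.

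The one substantive point, and the main obstacle, is that the paper only identified the fibres $F_{\mathbf x}$ of the projection $(\mathbf y^1,\dots,\mathbf y^\ell)\mapsto(\mathbf y^1,\dots,\mathbf y^b)$. I must still check that this projection is locally trivial. I would argue as follows.
\begin{enumerate}[(i)]
\item Near a point $\mathbf x^0\in\mathbb FH(\mathbf n_b)$, choose continuous unit representatives $u_1,\dots,u_b$ of the lines $\mathbf x^1,\dots,\mathbf x^b$; they are pairwise orthogonal and lie in $\mathbb F^{n_b+1}$.
\item Fix vectors $w_{b+1},\dots,w_{n_b+1}$ that complete $u_1(\mathbf x^0),\dots,u_b(\mathbf x^0)$ to a basis of $\mathbb F^{n_b+1}$. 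Apply Gram--Schmidt to $u_1(\mathbf x),\dots,u_b(\mathbf x),w_{b+1},\dots,w_{n_b+1}$. On a neighbourhood $U$ this yields a continuous family of isometries $g_{\mathbf x}$ of $\mathbb F^{n_b+1}$ with $g_{\mathbf x}(u_k)=e_k$.
\item Extend $g_{\mathbf x}$ by the identity on the remaining coordinates of $\mathbb F^{n_\ell+1}$. Since $n_t\ge n_b$ for all $t>b$ (this is where weak monotonicity is used), $g_{\mathbf x}$ preserves every $\mathbb F^{n_t+1}$ with $t>b$.
\item Therefore $g_{\mathbf x}$ carries $F_{\mathbf x}$ homeomorphically onto $F_{\mathbf x^{\rm std}}$, where $\mathbf x^{\rm std}$ consists of the coordinate lines $e_1,\dots,e_b$. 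In $F_{\mathbf x^{\rm std}}$, each $\mathbf y^t$ is a line in $\mathbb F^{n_t+1}\cap\langle e_1,\dots,e_b\rangle^\perp\cong\mathbb F^{n_t-b+1}$, and the $\mathbf y^t$ are mutually orthogonal, so $F_{\mathbf x^{\rm std}}\cong\mathbb FH(\mathbf n_f)$.
\end{enumerate}
The map $U\times\mathbb FH(\mathbf n_f)\to p^{-1}(U)$, $(\mathbf x,\mathbf z)\mapsto(\mathbf x,g_{\mathbf x}^{-1}\mathbf z)$, is then a continuous bijection with continuous inverse, giving the local trivialization. With this in hand, the induction closes.

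In the real case the representatives $u_k$ are only defined up to sign, but continuous local choices still exist on small neighbourhoods, so the argument is unchanged. For $\mathbb H$, I would use right scalar multiplication and the standard quaternionic inner product; Gram--Schmidt works verbatim.
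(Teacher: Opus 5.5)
Your proposal is correct and follows the paper's route: the paper obtains the tower by iterating the one-step bundle \eqref{GMM bundle over GMM} over successive initial subsequences. Your reading of the fibre as the GMM of $(n_{\theta_{i-1}+1}-\theta_{i-1},\dots,n_{\theta_i}-\theta_{i-1})$ is the intended correction of the garbled indexing in the statement, and your Gram--Schmidt trivialization fills in the local triviality that the paper leaves implicit after identifying the fibres $F_{\mathbf x}$.
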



\subsection{Cohomology of generalized Milnor manifolds}

In this subsection, we give a Borel-type presentation of the cohomology ring $H^*(\mathbb{F}H(\mathbf{n});R)$ of generalized Milnor manifolds. Here $R=\mathbb{Z}_2$ if $\mathbb{F}=\mathbb{R}$, and $R=\mathbb{Z}$ otherwise.

Let $p:\mathbb{F}P(E)\to B$ denote the projectivization of an $n$-rank $\mathbb{F}$-vector bundle $E\to B$, where $\mathbb{F}\in \{\mathbb{R}, \mathbb{C}, \mathbb{H}\}$. We write $c_i(E)$ for the $i$-th characteristic class of $E$, namely the $i$-th Stiefel--Whitney class when $\mathbb{F}=\mathbb{R}$, the $i$-th Chern class when $\mathbb{F}=\mathbb{C}$, and the $i$-th Pontryagin class when $\mathbb{F}=\mathbb{H}$. Here, for a quaternionic vector bundle $E$ of rank $n$, by Pontryagin class of $E$, we mean the Pontryagin class of the underlying real vector bundle $E_{\mathbb R}$ of rank $4n.$

We recall the construction of generalized Milnor manifolds from Construction~\ref{construction of GMM fibre bundle}. Following the notation introduced there, we obtain the following theorem.

\begin{theorem}\label{cohom of GMM}
There is an isomorphism of graded algebras
    \[
    H^*(\mathbb{F}H(\mathbf{n});R)
    \cong 
    R[x_1,\dots,x_\ell]\big/\langle h_{n_1+1}(x_1),\, h_{n_2}(x_1,x_2),\, \dots,\, h_{n_\ell-\ell+2}(x_1,\dots,x_\ell)\rangle,
    \]
where $\deg(x_i)=\mathrm{dim}_{\mathbb R}(\mathbb F)$ and the isomorphism is given by $c_1(\xi_i) \longmapsto -x_i.$
Here $h_k(x_1,\dots,x_r)$ denotes the complete homogeneous symmetric polynomial of degree $k$ in the variables $x_1,\dots,x_r$.
\end{theorem}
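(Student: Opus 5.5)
Induct on $\ell$ along the tower of Construction~\ref{construction of GMM fibre bundle}, applying the projective bundle formula at each stage. Here $\xi_i$ is the tautological $\mathbb F$-line bundle on $B_i=\mathbb FP(E_{i-1})$, and we set $x_i:=-c_1(\xi_i)$, of degree $d_{\mathbb F}=\dim_{\mathbb R}\mathbb F$. For $\mathbb F=\mathbb H$ we use the first symplectic Pontryagin class, i.e. the generator of $H^4(\mathbb HP^\infty;\mathbb Z)$. Signs are harmless for $R=\mathbb Z_2$.

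For an $\mathbb F$-vector bundle $E\to B$ of rank $r$, the projective bundle (Leray--Hirsch) theorem gives
\[
H^*(\mathbb FP(E);R)=H^*(B;R)[x]\big/\Big\langle \textstyle\sum_{k=0}^{r} c_k(E)\,x^{r-k}\Big\rangle,
\qquad x=-c_1(\xi_E),
\]
up to the standard sign convention. Leray--Hirsch applies because the powers $1,x,\dots,x^{r-1}$ restrict to a basis of the cohomology of each fibre $\mathbb FP^{r-1}$. In the quaternionic case $c_k$ denotes the symplectic Pontryagin classes, which satisfy the Whitney formula integrally.

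\textbf{Inductive step.} Suppose $H^*(B_{i-1};R)$ is generated by $x_1,\dots,x_{i-1}$ subject to the first $i-1$ relations. The bundle $E_{i-1}=\xi_{i-1}^\perp\oplus\varepsilon^{n_i-n_{i-1}}$ has rank $n_i-i+1$. It is the orthogonal complement, inside the trivial bundle $\varepsilon^{n_i+1}$, of $\xi_1\oplus\cdots\oplus\xi_{i-1}$ pulled back to $B_{i-1}$; this follows from the description in Proposition~\ref{equivalence of definitions}, since $E_{i-1}$ is the set of vectors in $\mathbb F^{n_i+1}$ orthogonal to the lines $\mathbf x_1,\dots,\mathbf x_{i-1}$. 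The Whitney formula then gives
\[
c(E_{i-1})=\prod_{j<i}(1-x_j)^{-1}=\sum_k h_k(x_1,\dots,x_{i-1}).
\]
So $c_k(E_{i-1})=h_k(x_1,\dots,x_{i-1})$, where we use $c(\xi_j)=1-x_j$, adjusting signs by convention. Plugging into the projective bundle formula, the new relation is
\[
\sum_{k=0}^{r} h_k(x_1,\dots,x_{i-1})\,x_i^{\,r-k}=h_r(x_1,\dots,x_i),\qquad r=n_i-i+1 .
\]
For $i=1$ this is $h_{n_1+1}(x_1)$, for $i=2$ it is $h_{n_2}(x_1,x_2)$, and for $i=\ell$ it is $h_{n_\ell-\ell+2}$. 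This is off by one from $r=n_i-i+1$, which is the step to check carefully. The rank of $E_{i-1}$ is $n_i+1-(i-1)=n_i-i+2$, so the relation degree is $n_i-i+2$, consistent with the statement: $n_1+1$, then $n_2$, and so on. Adjoining the new relation to the quotient presentation of $H^*(B_{i-1})$ completes the induction.

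\textbf{Main obstacle.} The delicate point is justifying $c(E_{i-1})\cdot\prod_{j<i}c(\xi_j)=1$. This needs the orthogonal splitting $\varepsilon^{n_i+1}\cong \xi_1\oplus\cdots\oplus\xi_{i-1}\oplus E_{i-1}$ over $B_{i-1}$, which we check inductively from $E_{i-1}=\xi_{i-1}^\perp\oplus\varepsilon^{n_i-n_{i-1}}$, where $\xi_{i-1}^\perp$ is taken inside $E_{i-2}$. There are two further issues. First, for $\mathbb H$ the lines are right $\mathbb H$-modules and the orthogonal complement is taken with respect to the quaternionic Hermitian form; the relation $\sum_{s}x^i_sx^j_s=0$ in Definition~\ref{defn of GMM equations} is interpreted through this, as in Proposition~\ref{equivalence of definitions}. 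Second, the sign conventions must be tracked so that the relation comes out exactly as $h$ rather than an alternating variant, which the substitution $x_i=-c_1(\xi_i)$ is designed to ensure. Over $\mathbb Z$ the Leray--Hirsch freeness holds because every fibre cohomology is free; over $\mathbb Z_2$ with Stiefel--Whitney classes the same argument goes through verbatim.
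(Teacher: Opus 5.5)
Your proposal is correct and is essentially the paper's argument: induction up the tower of projectivizations, the projective bundle (Leray--Hirsch) relation $\sum_k c_k(E_{i-1})x_i^{r-k}=0$, the Whitney formula applied to $\varepsilon^{n_i+1}\cong\xi_1\oplus\cdots\oplus\xi_{i-1}\oplus E_{i-1}$ to get $c_k(E_{i-1})=h_k(x_1,\dots,x_{i-1})$, and the recurrence for complete homogeneous polynomials. Delete your stray claim that $E_{i-1}$ has rank $n_i-i+1$: the correct rank $n_i-i+2$, which you then arrive at, is exactly what produces the stated relations (the paper's own write-up slips to $h_{n_\ell-\ell+1}$ at this same point), and your use of the symplectic Pontryagin class for $\mathbb H$ is the right integral normalization, since $p_1$ of the underlying real bundle of the tautological quaternionic line bundle is twice a generator.
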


\begin{proof}
We proceed by induction on $\ell$, the number of projectivizations. The base case $\ell=1$ is trivial: $\mathbb{F}H(n_1) = \mathbb{F}P^{n_1}$, and the relation is $x_1^{n_1+1} = 0$, which is exactly $h_{n_1+1}(x_1) = 0$.

Assume the result holds for $B_{\ell-1}$. By construction, $B_\ell = \mathbb{F}P(E_{\ell-1})$ where $E_{\ell-1} = \xi_{\ell-1}^\perp \oplus \varepsilon^{n_\ell - n_{\ell-1}}$. By the Leray--Hirsch theorem \cite[Theorem 4D.1]{Hat}, $H^*(B_\ell; R)$ is a free $H^*(B_{\ell-1}; R)$-module generated by $\{1, x_{\ell}, \ldots, x_{\ell}^k\}$ where $k = n_{\ell} - \ell +1$. The generator $x_\ell = -c_1(\xi_\ell)$ satisfies the relation
    \begin{equation}\label{eq_leray_relation}
        \sum_{j=0}^{n_\ell - \ell +1} c_j(E_{\ell-1}) x_\ell^{n_\ell-\ell+1-j} = 0.
    \end{equation}
To compute the characteristic classes $c_j(E_{\ell-1})$, we use the splitting principle. By construction, the bundle $\varepsilon_{\mathbb{F}}^{n_\ell+1}$ admits a decomposition $\varepsilon_{\mathbb{F}}^{n_{\ell}+1} \cong \xi_1 \oplus \xi_2 \oplus \cdots \oplus \xi_{\ell} \oplus \xi_{\ell}^{\perp},$ which follows inductively from the definition of the bundles $E_i$.
Setting $x_i = -c_1(\xi_i)$ for all $i$ and using the Whitney sum formula, the total characteristic class of the complement bundle is given by
    \begin{equation*}
    c(\xi_\ell^\perp) = \prod_{i=1}^\ell (1 - x_i)^{-1}.
    \end{equation*}
Recall that the generating function for the complete homogeneous symmetric polynomials $h_k$ in $r$ variables is defined by $\sum_{k=0}^\infty h_k(x_1, \dots, x_r) t^k = \prod_{i=1}^r (1 - x_i t)^{-1}$. Evaluating at $t=1$, we get
    \begin{equation}\label{Eq_ h_j}
    c_j(\xi_{l-1}^\perp) = h_j(x_1, \dots, x_{l-1}).
    \end{equation}
Now, consider the bundle $E_{\ell-1} = \xi_{\ell-1}^\perp \oplus \varepsilon_{\mathbb F}^{n_\ell - n_{\ell-1}}$. We have $c(E_{\ell-1}) = c(\xi_{\ell-1}^\perp)$.
Thus, $c_j(E_{\ell-1}) = h_j(x_1, \dots, x_{\ell-1})$, by \eqref{Eq_ h_j}. Substituting this into the relation \eqref{eq_leray_relation}, we obtain
    \begin{equation}
    \sum_{j=0}^{n_\ell - (\ell-1)} h_j(x_1, \dots, x_{\ell-1}) x_\ell^{n_\ell - (\ell-1) - j} = 0.
    \end{equation}
By the standard recurrence relation for complete homogeneous polynomials, $h_k(x_1, \dots, x_r) = \sum_{j=0}^k h_j(x_1, \dots, x_{r-1}) x_r^{k-j}$, the sum above is precisely $h_{n_\ell - \ell + 1}(x_1, \dots, x_\ell)$. This completes the inductive step and the proof.
\end{proof}

For a nonzero nilpotent element $x$ in a ring $R$, its \emph{height} is defined as the positive integer $n$ such that $x^n \neq 0$ and $x^{n+1} = 0$. We have the following remark about the heights of the generators $x_i$ in $H^*(\mathbb FH(\mathbf n);R).$

\begin{remark}\label{remark heights of generators}
    The first $i$ many vector bundle relations appearing in the construction of $\mathbb FH(\mathbf n)$, $\xi_i\oplus \xi_i^\perp\cong p_i^*(E_{i-1}),\; i=1,2,\ldots,i$, combine into $\xi_1\oplus \xi_2\oplus \cdots \oplus \xi_i\oplus\xi_i^\perp\cong \mathbb F^{n_i+1}.$ Taking total characteristic class both sides appropriately to $\mathbb F$, we get
    \begin{align}
        &\prod_{k=1}^i (1-x_k)\cdot \big(1+c_1(\xi_i^\perp)+\cdots +c_{n_i-i+1}(\xi_i^\perp)\big)=1, \nonumber
        \\
        \implies  &\prod_{k=1}^{i-1} (1-x_k)\cdot \big(1+c_1(\xi_i^\perp)+\cdots +c_{n_i-i+1}(\xi_i^\perp)\big)= \big (1+x_i+x_i^2+\cdots\big)\label{Eq xi^ni+1 zero}.
    \end{align} 
    Using the Leray--Hirsch theorem, we see that the left-hand side of \eqref{Eq xi^ni+1 zero} is nonzero in degree $n_i$ and vanishes in all higher degrees. Consequently, the right-hand side must satisfy
    $x_i^{n_i}\neq 0$ and $x_i^{n_i+1}=0$.
    Therefore, the height of $x_i$ is $n_i$ for each $i$. \qed
\end{remark}

\subsection{Generalized complex Milnor manifolds are K\"ahler}\label{Subsec_GMM is Kahler}
Our goal in this subsection is to realize the generalized complex Milnor manifold $\mathbb{C}H(\textbf{n})$ as a compact K\"ahler manifold for any weakly increasing finite sequence $\textbf{n}$ of natural numbers. As an application, this provides us with a concrete foundation to study topological complexity of a generalized Milnor manifold.

A symplectic manifold $(M,\omega)$ equipped with an integrable almost complex structure $J$ is called a \emph{K\"{a}hler manifold} if $J$ is compatible with the symplectic form $\omega$. 
Throughout this subsection, we set $\mathbb{F}=\mathbb{C}$ and we simplify the notations by $\mathbb{P}^d \coloneq \mathbb{C}P^d$ for a $d$-dimensional complex projective spaces and $\mathbb{P}(E) \coloneq \mathbb{C}P(E)$ for a complex projectivization of a complex vector bundle $E$. 

We begin with a general remark about complements of subbundles in a vector bundle. Let $E \to X$ be a vector bundle over a paracompact base space $X$ and $W \subset E$ a subbundle. The paracompactness of $X$ ensures a Riemannian metric on $E$, that is, a continuous family of inner products on the fibers of $E$. With respect to this metric, each fiber admits an orthogonal decomposition
    $$E_b = W_b \oplus W_b^{\perp}.$$
These orthogonal complements vary continuously with $b$ and therefore assemble to form a subbundle $W^{\perp} \subset E$. The orthogonal projection $ E \longrightarrow W^{\perp}$ is fiberwise linear and has kernel $W$. Consequently, it induces a bundle isomorphism
    $$E/W \cong W^{\perp}.$$
In particular, the total spaces are homeomorphic. Indeed, they are linearly homeomorphic along the fibers, while the base space remains the same. Note that this construction is not canonical as it depends on the choice of the Riemannian metric. 
Therefore, replacing quotient bundles by orthogonal complement bundles while constructing $B_\ell$ in the Construction~\ref{construction of GMM fibre bundle} leads to the same spaces up to homeomorphism. In what follows, we freely use both descriptions interchangeably.

For basic definitions in complex geometry, we refer to the book of Huybrechts \cite{Huy05} and Voisin \cite{Voisin2002}. 
The \emph{tautological line bundle} on $\mathbb{P}^r$ is defined by
    $$\mathcal{O}_{\mathbb{P}^r}(-1)=    \{(l,z) \in \mathbb{P}^r \times \mathbb{C}^{r+1} \mid z \in l \},$$
and we recall from \cite[Proposition 2.2.6]{Huy05} that $\mathcal{O}_{\mathbb{P}^r}(-1)$ is a holomorphic line bundle over $\mathbb{P}^r$.
For a complex manifold $X$ and a holomorphic vector bundle $E \to X$ of rank $r$, we denote the associated projective bundle by
    $$\pi : \mathbb{P}(E) \to X.$$
We recall from \cite[Example 2.4.9]{Huy05} and \cite[\S9.1]{EisHar16} that locally a point of $\mathbb{P}(E)$ correspond to pair $(x,l)$ with $x \in X$ and $l$ a one-dimensional subspace of the fiber $E_x$ of $E$. 
The canonical holomorphic line subbundle $\mathcal{O}_{\mathbb{P}(E)}(-1) \subset \pi^*E$ is called the \emph{relative tautological holomorphic line bundle}, whose fiber at a point $(x,l) \in \mathbb{P}(E)$ is the line $\mathcal{O}_{\mathbb{P}(E)}(-1)_{(x,l)} = l \subset E_x.$
The inclusion gives rise to the \emph{universal exact sequence}
    $$0 \longrightarrow \mathcal{O}_{\mathbb{P}(E)}(-1) \longrightarrow \pi^*E \longrightarrow Q \longrightarrow 0,$$
where $Q$ is the universal quotient bundle of rank $r-1$.

We now revisit Construction~\ref{construction of GMM fibre bundle} of the generalized Milnor manifold using iterative projectivization of holomorphic vector bundles. Let $\mathbf{n}=(n_1,n_2,\ldots,n_\ell)$ be a weakly increasing sequence of positive integers. Set $B_1 := \mathbb{P}^{n_1}$. Let
    $$0 \longrightarrow \mathcal{O}_{B_1}(-1)\longrightarrow \mathcal{O}_{B_1}^{\oplus (n_1+1)} \longrightarrow Q_1 \longrightarrow 0$$
be the universal exact sequence on $B_1$, where $Q_1$ is the universal quotient bundle of rank $n_1$.
Observe that, as a complex vector bundle, $\mathcal{O}_{\mathbb{P}^r}(-1)$ coincides with $\xi_{1}^{\perp}$, the complement bundle, as defined in Section \ref{Sec_GMM}.

Assume that for some $1\le k<\ell$ we have constructed a complex manifold $B_k$ together with a holomorphic vector bundle $E_{k-1}$ of rank $n_k-k+2$ such that $B_k = \mathbb{P}(E_{k-1})$. Now let
    $$0 \longrightarrow \mathcal{O}_{B_k}(-1) \longrightarrow p_{k}^*E_{k-1} \longrightarrow Q_k \longrightarrow 0$$
be the universal exact sequence on $B_k$ where $p_{k} \colon B_k \to B_{k-1}$ and $Q_k$ is the universal quotient bundle of rank $(n_k-k+1)$.
As in the previous case, the bundle $\mathcal{O}_{B_k}(-1)$ is identified with $\xi_{k}$ and $Q_k$ is identified with $\xi_{k}^{\perp}$ as complex vector bundles.
We define a holomorphic vector bundle
    $$E_k := Q_k \oplus \mathcal{O}_{B_k}^{\oplus (n_{k+1}-n_k)},$$
which has rank $n_{k+1}-(k+1)+2$, and set $B_{k+1} := \mathbb{P}(E_k)$, with projection $p_{k+1}:B_{k+1}\to B_k$.
Thus we realize the generalized complex Milnor manifold $\mathbb{C}H(\mathbf{n})$ as an iterated projectivization of holomorphic vector bundles.
Since the projectivization of a holomorphic vector bundle over a K\"ahler manifold is again K\"ahler (see \cite[Proposition 3.18]{Voisin2002}), we obtain the following.

    \begin{proposition}\label{Prop_Kahler}
        The generalized Milnor manifold $\mathbb{C}H(\mathbf{n})$ is a compact K\"ahler manifold.
    \end{proposition}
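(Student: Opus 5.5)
The plan is to argue by induction on $\ell$, using the holomorphic iterated projectivization just described. The base case is $B_1=\mathbb{P}^{n_1}$, which is a compact K\"ahler manifold with the Fubini--Study form. It is compact and a complex manifold of dimension $n_1$.

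For the inductive step, assume $B_k$ is a compact K\"ahler manifold. We need $E_k=Q_k\oplus\mathcal{O}_{B_k}^{\oplus(n_{k+1}-n_k)}$ to be a holomorphic vector bundle on $B_k$. This holds because $Q_k$ is the cokernel of the holomorphic injection $\mathcal{O}_{B_k}(-1)\hookrightarrow p_k^*E_{k-1}$, which has constant rank, and the pullback of a holomorphic bundle along the holomorphic map $p_k$ is holomorphic. Then $B_{k+1}=\mathbb{P}(E_k)$ is a compact complex manifold: it is a holomorphic fibre bundle with compact fibre $\mathbb{P}^{n_{k+1}-k}$ over a compact base.

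To see that $B_{k+1}$ is K\"ahler, I would invoke \cite[Proposition 3.18]{Voisin2002}. Concretely, let $\omega$ be a K\"ahler form on $B_k$ and $\eta$ a closed real $(1,1)$-form on $\mathbb{P}(E_k)$ that is positive on fibres. Such an $\eta$ is the curvature form of $\mathcal{O}_{\mathbb{P}(E_k)}(1)$ for a hermitian metric induced from one on $E_k$. Then $\eta+C\,\pi^*\omega$ is K\"ahler for $C\gg0$, using compactness of $B_k$ to get uniform positivity.

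Finally, I would identify $B_\ell$ with $\mathbb{C}H(\mathbf n)$. The holomorphic bundles $\mathcal{O}_{B_k}(-1)$ and $Q_k$ agree, as topological complex bundles, with $\xi_k$ and $\xi_k^\perp$, via the metric splitting remark made before the construction. Hence the holomorphic tower is isomorphic, as a tower of projective bundles, to Construction~\ref{construction of GMM fibre bundle}. By Proposition~\ref{equivalence of definitions}, $B_\ell$ is homeomorphic, and in fact diffeomorphic, to $\mathbb{C}H(\mathbf n)$. So $\mathbb{C}H(\mathbf n)$ carries a K\"ahler structure.

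The main obstacle is this last identification. A homeomorphism only transports the K\"ahler structure in the sense that the topological manifold $\mathbb{C}H(\mathbf n)$ admits one. If one wants the structure on the defining variety itself, I would instead observe directly that $\mathbb{C}H(\mathbf n)$ is a smooth closed submanifold of $\prod\mathbb{P}^{n_t}$, using the conjugated-equation version, and restrict the product Fubini--Study form. Since conjugate equations are not holomorphic, the cleaner route is the tower argument above.
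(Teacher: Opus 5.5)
Your proof is correct and follows the paper's route. Like the paper, you build $\mathbb{C}H(\mathbf n)$ as the tower $B_{k+1}=\mathbb{P}\bigl(Q_k\oplus\mathcal{O}_{B_k}^{\oplus(n_{k+1}-n_k)}\bigr)$ of holomorphic projectivizations, apply \cite[Proposition 3.18]{Voisin2002} at each stage, and identify the result with $\mathbb{C}H(\mathbf n)$ via Proposition~\ref{equivalence of definitions}; you only spell out more explicitly the curvature argument behind Voisin's result and the identifications $\mathcal{O}_{B_k}(-1)\cong\xi_k$, $Q_k\cong\xi_k^\perp$.

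Drop the closing aside: it is not merely less clean but fails, because the conjugated (Hermitian) equations do not cut out a complex submanifold of $\prod_t\mathbb{P}^{n_t}$. Restricting the product Fubini--Study form to that submanifold therefore gives no K\"ahler structure.
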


In fact, by Serre's GAGA theorem (see \cite[Theorem 2.2.19]{Huy05}), there is a canonical bijection between the set of holomorphic vector bundles of rank $r$ and the set of locally free $\mathcal{O}_X$-modules of rank $r$. 
Using this correspondence, the above construction endows the generalized Milnor manifold with the structure of a smooth complex projective variety.

\section{Interplay between GMMs and Partial Flag Manifolds}\label{Sec_GMM and Flag mfld}

In this section, we address the question when a generalized Milnor manifold is homotopy equivalent to a partial flag manifold. We begin by recalling necessary background on partial flag manifolds and their cohomology, and treat the complex/quaternionic and real cases separately.

Let $\boldsymbol{\mu} = (m_1, m_2, \ldots, m_\wp)$ be a weakly increasing sequence of positive integers. Recall that the \emph{partial flag manifold} $\mathbb{F}G(\boldsymbol{\mu})$ is defined as:
$$
    \mathbb{F}G(\boldsymbol{\mu}) := \big\{V_1  \oplus \cdots \oplus V_\wp \mid V_i\perp V_j\text{ for }i\neq j\text{ and }\dim_{\mathbb{F}}V_i = m_i \text{ for } i=1,  \ldots, \wp\big\}.
$$
In particular, Grassmannians are partial flag manifolds of type $\mathbb{F}G(m_1, m_2)$, which we denote by $\mathbb{F}G_{m_1, m_1+m_2}$ alternatively.

Let $X$ be a finite CW complex and $\mathbb{K}$ a field. The Hilbert--Poincar\'e series of the cohomology algebra $H^*(X; \mathbb{K})$ is given by:
$$
    P_X(t) = \sum_{i \ge 0} \dim_{\mathbb{K}} H^i(X; \mathbb{K}) t^{i},
$$
where we set $\mathbb{K} = \mathbb{Z}_2$ if $\mathbb{F} = \mathbb{R}$, and $\mathbb{K} = \mathbb{Q}$ if $\mathbb{F} \in \{\mathbb{C}, \mathbb{H}\}$. Define $d := \dim_{\mathbb{R}}\mathbb{F}$ for $\mathbb{F} \in \{\mathbb{R}, \mathbb{C}, \mathbb{H}\}$. For the partial flag manifold $\mathbb{F}G(\boldsymbol{\mu})$ with $\boldsymbol{\mu}=(m_1,m_2,\ldots, m_\wp)$, one has $\dim H^d\bigl(\mathbb{F}G(\boldsymbol{\mu})\bigr) = \wp-1$.

We need the following lemma for future references.

\begin{lemma}\label{same mutisets}
    Let $\mathcal{T}_1$ and $\mathcal{T}_2$ be two towers of $\mathbb{F}$-projectivizations of heights $\ell_1$ and $\ell_2$, respectively, both starting from a trivial bundle over a point. Suppose that $\mathcal{T}_1$ and $\mathcal{T}_2$ are homeomorphic. Then $\ell_1 = \ell_2$, and the multisets of the fiber projective spaces appearing in the two towers coincide.
\end{lemma}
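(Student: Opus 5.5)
Let $\mathcal T$ be a tower $X_\ell\to X_{\ell-1}\to\cdots\to X_1\to X_0=\{*\}$ with $X_i=\mathbb FP(V_{i-1})$ for an $\mathbb F$-vector bundle $V_{i-1}\to X_{i-1}$ of rank $r_i+1$, so the $i$-th fibre is $\mathbb FP^{r_i}$ (with $r_i\ge 1$, otherwise the stage is trivial and not counted). The idea is to extract the height and the multiset $\{r_1,\dots,r_\ell\}$ from the cohomology ring $H^*(X_\ell;\mathbb K)$ alone; a homeomorphism, indeed even a homotopy equivalence, then forces the two invariants to agree. Throughout, $\mathbb K=\mathbb Z_2$ for $\mathbb F=\mathbb R$ and $\mathbb K=\mathbb Q$ otherwise, and $d=\dim_{\mathbb R}\mathbb F$.

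\emph{Height.} By iterated Leray--Hirsch, exactly as in the proof of Theorem~\ref{cohom of GMM}, $H^*(X_\ell;\mathbb K)$ is generated by classes $y_1,\dots,y_\ell$ of degree $d$, where $y_i=-c_1$ of the tautological line bundle on $X_i$. It is free over $\mathbb K$ with basis the monomials $y_1^{a_1}\cdots y_\ell^{a_\ell}$, $0\le a_i\le r_i$. Hence $\dim H^d=\ell$, which recovers $\ell_1=\ell_2$.

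\emph{Multiset.} The Poincar\'e series is
\[
P(t)=\prod_{i=1}^{\ell}\bigl(1+t^d+\cdots+t^{dr_i}\bigr)=\prod_{i=1}^{\ell}\frac{1-t^{d(r_i+1)}}{1-t^d}.
\]
It follows that
\[
P(t)\,(1-t^d)^{\ell}=\prod_i\bigl(1-t^{d(r_i+1)}\bigr),
\]
and this polynomial is determined by the cohomology together with $\ell$. It then suffices to show that $\prod_i(1-u^{s_i})$ determines the multiset $\{s_i\}$. I would argue by induction on $\ell$. The smallest positive exponent with nonzero coefficient in $\prod_i(1-u^{s_i})$ is $s_{\min}$, where the coefficient is $-($multiplicity of $s_{\min})\neq0$; for $\mathbb K=\mathbb Z_2$ this coefficient is a Poincar\'e-series coefficient, not a field element, so the argument is unaffected. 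Divide by $1-u^{s_{\min}}$ and recurse. Equivalently, compare the multiplicities of roots of unity of each order via cyclotomic factors. This recovers $\{r_i\}$.

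\emph{Main obstacle.} The only delicate point is justifying Leray--Hirsch over $\mathbb K$ at each stage. One needs classes restricting to generators of $H^*(\mathbb FP^{r_i};\mathbb K)$; the first Stiefel--Whitney/Chern/symplectic Pontryagin class of the tautological line bundle works, with $\mathbb Z_2$ coefficients in the real case. One must also make sure that towers with a trivial fibre $\mathbb FP^0$ are excluded, or else conventionally counted, since they are invisible in cohomology. With these, the homeomorphism induces an isomorphism of graded $\mathbb K$-algebras, hence equal Poincar\'e series, and the lemma follows.
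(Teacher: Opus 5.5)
Your proposal is correct and follows the paper's proof: Leray--Hirsch gives the factored Poincar\'e polynomial, the degree-$d$ Betti number gives the height, and unique factorization of the product recovers the multiset. The only difference is in the last step: you peel off the smallest $s_i$ via the lowest nonzero coefficient of $\prod_i(1-u^{s_i})$, whereas the paper peels off the largest $n$ via the cyclotomic factor $\Phi_{N+1}$ of highest index and avoids irreducibility of cyclotomic polynomials. Your caveat that $\mathbb{F}P^0$-stages must be excluded for the height statement is a point the paper leaves implicit.
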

\begin{proof}
    Let $A = \{n_1, \dots, n_{\ell_1}\}$ and $B = \{m_1, \dots, m_{\ell_2}\}$ be the multisets of fiber dimensions for $\mathcal{T}_1$ and $\mathcal{T}_2$, respectively. For any projective bundle $\mathbb{F}P^k \hookrightarrow E \overset{}{\twoheadrightarrow} X$ in our towers, the cohomology of the fiber is generated by the first characteristic class of the canonical line bundle, which extends to a cohomology class of the total space $E$. By the Leray-Hirsch theorem, $H^*(E;\mathbb K) \cong H^*(X; \mathbb{K}) \otimes_{\mathbb{K}} H^*(\mathbb{F}P^k; \mathbb{K})$ as graded $\mathbb{K}$-modules.

    Iterating this decomposition up the towers, the Poincar\'e polynomials factor completely. Let $x = t^d$ and $Q_k(x) = \sum_{i=0}^k x^i$. Then the isomorphism of the cohomology rings dictates an equality of Poincar\'e polynomials in $\mathbb{Z}[x]$:
    \begin{equation}\label{eq:poincare_equality}
        \prod_{n \in A} Q_n(x) = \prod_{m \in B} Q_m(x).
    \end{equation}
    Equating the degree 1 coefficients of $x$ (which correspond to the $d$-th Betti numbers of the manifolds) yields $\ell_1 = \ell_2$.

    To show $A = B$ as multisets, recall that $Q_k(x)$ factors over $\mathbb{Z}[x]$ as $\prod \Phi_r(x)$ over all divisors $r \mid k+1$ with $r \ge 2$, where $\Phi_r(x)$ is the $r$-th cyclotomic polynomial. Because $\mathbb{Z}[x]$ is a unique factorization domain and the cyclotomic polynomials are irreducible, the multisets of irreducible factors on both sides of Equation (\ref{eq:poincare_equality}) must coincide. 
    
    Notice that the strictly largest cyclotomic index $r$ for which $\Phi_r(x)$ divides $Q_k(x)$ is $r = k+1$. Therefore, the maximum element $N \in A \cup B$ is uniquely identified by the presence of the factor $\Phi_{N+1}(x)$. Canceling the corresponding factors of $Q_N(x)$ from both sides and proceeding by descending induction on the maximum remaining elements strictly forces $A = B$.
\end{proof}

\subsection{Complex and Quaternionic Generalized Milnor Manifolds}

In this subsection, we completely classify the generalized Milnor manifolds that are homotopy equivalent to a partial flag manifold over the fields $\mathbb{F} \in \{\mathbb{C}, \mathbb{H}\}$. This is achieved by comparing their respective cohomology rings, which remain isomorphic under homotopy equivalence.

In \cite{BHH83}, Broughton--Hoffman--Homer characterized K\"ahler classes in a partial flag manifold and gave a formula for the height of a $d$-dimensional cohomology class as follows.

\begin{theorem}[Theorem 2.3 \cite{BHH83}]\label{kahler class}
    A cohomology class $x = \sum_{i=1}^{\wp} a_i x_i \in H^d\big(\mathbb{F}G(\boldsymbol{\mu}); \mathbb{Z}\big)$ is a K\"ahler class if and only if the coefficients are strictly decreasing, i.e., $a_1 > a_2 > \dots > a_{\wp}$.
\end{theorem}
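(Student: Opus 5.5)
The plan is to treat the complex case $\mathbb{F}=\mathbb{C}$, where $d=2$ and $\mathbb{F}G(\boldsymbol{\mu})=U(m)/U(m_1)\times\cdots\times U(m_\wp)$ is a projective homogeneous variety. Here I take $x_i=-c_1(V_i)$ with $V_i$ the tautological rank-$m_i$ bundle, so $H^2$ is spanned by $x_1,\dots,x_\wp$ subject to the single relation $\sum_i x_i=0$. The condition $a_1>\cdots>a_\wp$ is invariant under adding a constant to all $a_i$, so it is well defined on $H^2$. I expect the quaternionic version to be the formal analogue in degree $d=4$; for it I would only check that the same combinatorial argument transfers, since flag manifolds over $\mathbb{H}$ are not themselves Kähler.

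\emph{Sufficiency.} Put $M_k=m_1+\cdots+m_k$ for $1\le k\le \wp-1$. Let $\pi_k:\mathbb{F}G(\boldsymbol{\mu})\to G_{M_k,m}$ send a decomposition to $V_1\oplus\cdots\oplus V_k$. Then $y_k:=\pi_k^*(-c_1(\text{taut}))=x_1+\cdots+x_k$ is the pullback of the Plücker hyperplane class, which is ample. Abel summation, together with the relation $\sum_i x_i=0$, gives
\[
x=\sum_{i=1}^{\wp}a_ix_i=\sum_{k=1}^{\wp-1}(a_k-a_{k+1})\,y_k .
\]
The product map $\prod_k\pi_k:\mathbb{F}G(\boldsymbol{\mu})\to\prod_k G_{M_k,m}$ is a closed holomorphic embedding, since the partial sums determine the flag. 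Therefore a combination of the $y_k$ with all coefficients positive is the restriction of a Kähler class on the product, hence Kähler. Equivalently, one can pull back a product of Fubini--Study forms.

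\emph{Necessity.} For $i<j$, fix a decomposition and lines $L\subset V_i\oplus V_j$ spanned by one vector from $V_i$ and one from $V_j$. Let $C_{ij}\cong\mathbb{P}^1$ be the curve obtained by rotating a chosen line $\ell\subset V_i$ inside the plane $\ell\oplus\ell'$ ($\ell'\subset V_j$), swapping it with $\ell'$'s complement in that plane, and keeping everything else fixed. Along $C_{ij}$ the bundle $V_i$ contains the tautological line of this $\mathbb{P}^1$, and $V_j$ contains its quotient. Hence $\langle x_i,[C_{ij}]\rangle=1$, $\langle x_j,[C_{ij}]\rangle=-1$, and the other pairings vanish. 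This gives $\int_{C_{ij}}x=a_i-a_j$, which must be positive for a Kähler class because $C_{ij}$ is a complex curve. Taking $j=i+1$ yields strict decrease.

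The main obstacle is bookkeeping rather than ideas. The sign convention for $x_i$ must be fixed consistently with the statement, and the degree of the restricted tautological bundles on $C_{ij}$ must be verified carefully so that $\int_{C_{ij}}x$ comes out as $a_i-a_j$ and not its negative. Alternatively, one could use the invariant-form route: average a Kähler form over $U(m)$ and identify the invariant closed $(1,1)$-forms with Kirillov--Kostant--Souriau forms. Positivity is then exactly $\langle\lambda,\alpha\rangle>0$ on the nilradical roots, i.e.\ $a_i>a_j$ for $i<j$. I would use this as a cross-check.
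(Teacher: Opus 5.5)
Your argument is correct for $\mathbb{F}=\mathbb{C}$. It necessarily takes a different route from the paper, because the paper gives no proof: the statement is quoted from \cite{BHH83} and used as a black box in Proposition~\ref{height lower bound}.

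Both halves of your proof work, with two details to fill in.
\begin{itemize}
\item \emph{Sufficiency.} The identity $x=\sum_{k=1}^{\wp-1}(a_k-a_{k+1})\,y_k$ with $y_k=x_1+\cdots+x_k$ is right; it uses $\sum_i x_i=0$. For positivity you need the map $\prod_k\pi_k$ to be an immersion, not merely that the partial sums determine the flag. This holds because the block $\mathrm{Hom}(V_i,V_j)$, $i<j$, of the tangent space is detected by $d\pi_k$ for any $i\le k<j$. Hence $\sum_k c_k\pi_k^*\omega_k$ with all $c_k>0$ is positive definite.
\item \emph{Necessity.} Write the curve explicitly as $L\mapsto\bigl((V_i\ominus\ell)\oplus L,\;(V_j\ominus\ell')\oplus(L^{\perp}\cap P)\bigr)$ for $L\in\mathbb{P}(P)$, where $P=\ell\oplus\ell'$. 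It is holomorphic for the same complex structure you used in the sufficiency half: the partial sum $V_1\oplus\cdots\oplus V_k$ is a fixed subspace plus $L$ for $i\le k<j$, and is constant otherwise. Then $V_i|_{C_{ij}}\cong\mathcal{O}(-1)\oplus\underline{\mathbb{C}}^{m_i-1}$ and $V_j|_{C_{ij}}\cong\mathcal{O}(1)\oplus\underline{\mathbb{C}}^{m_j-1}$, so $\int_{C_{ij}}x=a_i-a_j$ exactly as you claim.
\end{itemize}
What your route buys over the citation is that it fixes two conventions the paper's statement leaves implicit. The first is $x_i=-c_1(V_i)$, which matches the paper's GMM convention. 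The second is the complex structure coming from the flag of partial sums $V_1\subset V_1\oplus V_2\subset\cdots$. The direction of the inequality depends on both, and changing either one reverses it.

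For $\mathbb{F}=\mathbb{H}$, your hedge should become a definite statement: there is no quaternionic version to prove. The ring $H^*(\mathbb{H}G(\boldsymbol{\mu});\mathbb{Z})$ is concentrated in degrees divisible by $4$, so $H^2=0$. Hence $\mathbb{H}G(\boldsymbol{\mu})$ admits no K\"ahler form, indeed no symplectic form, and \cite{BHH83} treats only complex flag manifolds. Your argument would also not \emph{transfer}: it is geometric (Fubini--Study forms, holomorphic curves), not combinatorial. What the paper actually needs for $\mathbb{H}$ is the height computation in Proposition~\ref{height lower bound}. That does follow from the complex case via the degree-doubling ring isomorphism $H^{2*}(\mathbb{C}G(\boldsymbol{\mu});\mathbb{Z})\cong H^{4*}(\mathbb{H}G(\boldsymbol{\mu});\mathbb{Z})$, which matches the degree-$d$ generators $x_i$. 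This works because height is a purely ring-theoretic invariant.
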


 \begin{theorem}[Theorem 3.1 \cite{BHH83}]\label{height}
     Given a cohomology class $x = \sum_{i=1}^\wp a_i x_i \in H^d(\mathbb{F}G(\boldsymbol{\mu}); \mathbb{Z})$, let $\{b_1, \ldots, b_r\}$ denote the set of distinct values among the coefficients $\{a_1, \ldots, a_\wp\}$. For each $j = 1, \ldots, r$, define $n'_j = \sum_{a_i=b_j} n_i$. Then the height of $x$ is $\sum_{1 \le p < q \le r} n'_p n'_q$.
\end{theorem}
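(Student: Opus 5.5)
The statement is quoted from \cite{BHH83}. Here is how I would prove it directly. Throughout I read the $n_i$ in the statement as the block sizes $m_i$ of $\boldsymbol{\mu}$. The $x_i$ are the first characteristic classes of the tautological bundles $V_i$, so $\sum_i x_i=0$ and adding a constant to all the $a_i$ does not change $x$.

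\textbf{Step 1: reduce to a coarser flag manifold.} Group the indices by the distinct values $b_1,\dots,b_r$ of the coefficients. Let $\boldsymbol{\mu}'=(n'_1,\dots,n'_r)$, listed in some order. Consider the map
\[
\pi:\mathbb{F}G(\boldsymbol{\mu})\to\mathbb{F}G(\boldsymbol{\mu}'),\qquad (V_1,\dots,V_\wp)\mapsto (W_1,\dots,W_r),\quad W_j=\bigoplus_{a_i=b_j}V_i .
\]
This is a fibre bundle whose fibre is a product of partial flag manifolds. By the Whitney sum formula, applied through the splitting principle, the first class $y_j$ of $W_j$ equals $\sum_{a_i=b_j}x_i$. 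In the quaternionic case the same holds at the level of the degree-$4$ generators, because the first symplectic Pontryagin class is additive. Hence
\[
x=\pi^*\Big(y\Big),\qquad y:=\sum_j b_j y_j .
\]

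\textbf{Step 2: upper bound.} The real dimension of $\mathbb{F}G(\boldsymbol{\mu}')$ is $d\cdot\sum_{p<q}n'_pn'_q$, and $y$ has degree $d$. So $y^k=0$ for $k>\sum_{p<q}n'_pn'_q$, and therefore $x^k=0$ in the same range.

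\textbf{Step 3: lower bound.} I need $x^N\neq 0$ for $N=\sum_{p<q}n'_pn'_q$. First, $\pi^*$ is injective. The fibre's cohomology is generated by the characteristic classes of the tautological bundles, and these extend to the total space, so Leray--Hirsch applies and $H^*(\mathbb{F}G(\boldsymbol{\mu}'))$ is a direct summand of $H^*(\mathbb{F}G(\boldsymbol{\mu}))$. It therefore suffices to show $y^N\neq0$ rationally.

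Reordering the blocks $W_j$ is a diffeomorphism of flag manifolds, and it permutes the $y_j$. So I may order the blocks with $b_1>b_2>\dots>b_r$. For $\mathbb{F}=\mathbb{C}$, Theorem~\ref{kahler class} then says $y$ is a K\"ahler class, and $y^N$ is a nonzero multiple of the volume class.

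For $\mathbb{F}=\mathbb{H}$ I would use the Borel presentation. The rational cohomology of $\mathbb{H}G(\boldsymbol{\mu}')$ is the same polynomial quotient as that of $\mathbb{C}G(\boldsymbol{\mu}')$, with all degrees doubled and the $x_i$ corresponding to one another. Nonvanishing of $y^N$ therefore transfers from the complex case.

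\textbf{Main obstacle.} I expect the main difficulty to lie in two places. One is justifying the injectivity of $\pi^*$, that is, the Leray--Hirsch hypothesis for the merging map. The other is the quaternionic transfer, which requires identifying the generators carefully (including signs) under the Borel isomorphism. If the transfer proves awkward, a fallback is to compute the top-degree coefficient of $y^N$ combinatorially from the presentation. That coefficient is a Vandermonde-type expression in the $b_j$, which is nonzero exactly because the $b_j$ are distinct.
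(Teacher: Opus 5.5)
Your proposal is correct and follows essentially the same route as the paper. The paper quotes this statement from \cite{BHH83} without proof, but its proof of Proposition~\ref{height lower bound} uses your scheme: pull back $y=\sum_j b_jy_j$ from the coarser flag manifold $\mathbb{F}G(\boldsymbol{\nu}_b)$, use Theorem~\ref{kahler class} to see that $y$ is K\"ahler of height $\dim_{\mathbb F}\mathbb{F}G(\boldsymbol{\nu}_b)$, and conclude by Leray--Hirsch injectivity of $p^*$. Your two extra steps fill in points the paper passes over silently: reordering the blocks so the $b_j$ are strictly decreasing before invoking Theorem~\ref{kahler class}, and handling $\mathbb{F}=\mathbb{H}$ through the degree-doubling isomorphism of Borel presentations (the same device the paper uses in Theorem~\ref{Prop_higher TC of complex GMM}).
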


We will also need the following combinatorial lemma to bound the heights.

\begin{lemma}\label{min height}
    For a natural number $n,$ let $\mathcal{P}(n)$ denote the set of all partitions of $n$ into natural numbers. Given a partition $\lambda=(n_1,\ldots,n_t)\in \mathcal P(n)$, let $A_\lambda$ be the quantity $\sum_{1\le i<j\le t}n_in_j.$ Then \[
    \min\{A_\lambda: \lambda\in \mathcal P(n)\}=A_{\lambda_0}, \quad\text{where }\lambda_0=(1,n-1).
    \]
\end{lemma}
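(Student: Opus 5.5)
The key identity I will use is
\[
A_\lambda=\sum_{1\le i<j\le t} n_in_j=\frac{1}{2}\Bigl(n^2-\sum_{i=1}^t n_i^2\Bigr),
\]
which holds because $\sum_i n_i = n$. Minimizing $A_\lambda$ over $\mathcal P(n)$ is therefore the same as maximizing $\sum_i n_i^2$ over partitions of $n$.

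The trivial partition $(n)$ has $A=0$. The lemma is meant for partitions with at least two parts, since in Theorem~\ref{height} the relevant classes have at least two distinct coefficient values, so I restrict to $t\ge 2$ and read the statement that way. Under that restriction, the plan is to show $\sum_i n_i^2\le 1+(n-1)^2$ for every partition with $t\ge 2$ parts.

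The proof of this bound goes in two steps.

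First, I merge parts. If $t\ge 3$, replacing the two parts $n_{t-1},n_t$ by the single part $n_{t-1}+n_t$ changes the sum of squares by $2n_{t-1}n_t>0$. It keeps the number of parts at least $2$ and strictly decreases $A_\lambda$. So the minimum over $t\ge2$ is attained at some $t=2$ partition.

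Second, I handle $t=2$ directly. For $\lambda=(a,n-a)$ with $1\le a\le n-1$, we have $A_\lambda=a(n-a)$. This is a concave function of $a$, so on the interval $[1,n-1]$ it is minimized at an endpoint, giving the value $n-1=A_{(1,n-1)}$. Equivalently, $a(n-a)-(n-1)=(a-1)(n-1-a)\ge 0$.

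The one delicate point is the interpretation just described. The case $t=1$ (that is, $\lambda=(n)$) must be excluded, and $n\ge 2$ is needed for $(1,n-1)$ to be a genuine partition. I will state this convention explicitly, matching how the lemma is used with Theorem~\ref{height} for non-constant classes. Beyond that, the argument is elementary.
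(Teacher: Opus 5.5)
Your proof is correct and is essentially the paper's argument. Both use the identity $2A_\lambda=n^2-\sum_i n_i^2$, merge two parts to strictly increase $\sum_i n_i^2$ until only two parts remain, and then minimize $a(n-a)$ over $1\le a\le n-1$. Your explicit restriction to partitions with at least two parts (and $n\ge 2$) is a needed correction rather than a deviation: the one-part partition $(n)$ has $A_{(n)}=0$, and the paper's proof silently discards it when it concludes $t=2$, whereas your convention matches the lemma's use for non-constant (nonzero) classes.
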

\begin{proof}
Let $\lambda=(n_1,\ldots,n_t)\in\mathcal P(n)$, so that $n_i\ge 1$ and $\sum_{i=1}^t n_i=n$. Using
\[
\left(\sum_{i=1}^t n_i\right)^2
= \sum_{i=1}^t n_i^2 + 2\sum_{1\le i<j\le t} n_i n_j,
\]
we obtain $2A_\lambda=n^2-\sum_{i=1}^t n_i^2$. Thus minimizing $A_\lambda$ is equivalent to maximizing $\sum_{i=1}^t n_i^2$.

We claim that that the partition for which the maximum of $\sum_{i=1}^tn_i^2$ attains, must have exactly two parts. Suppose $t\ge 3$. Replacing two parts $n_1,n_2$ by their sum produces a new partition $$\lambda'=(n'_1,n'_2, \ldots, n'_{t-1})=(n_1+n_2,n_3,\ldots,n_t)$$ with
$(n_1+n_2)^2>n_1^2+n_2^2$. Here,  $\sum_{i=1}^{t-1} (n'_i)^2>\sum_{i=1}^t n_i^2$, a contradiction. Therefore $t=2$.

Consider $\lambda=(k,n-k)$ with $1\le k\le n-1$. Then
$k^2+(n-k)^2=n^2-2k(n-k)$, which is maximized when $k(n-k)$ is minimized, i.e., at $k=1$ or $k=n-1$. Hence $\sum n_i^2$ is maximized for $\lambda_0=(1,n-1)$, and consequently $A_\lambda$ is minimized at $\lambda_0$.
\end{proof}

\begin{proposition}\label{height lower bound} 
The height of a nonzero cohomology class $x \in H^d\big(\mathbb{F}G(1^\ell, n); \mathbb{Z}\big)$ is at least $n$.
\end{proposition}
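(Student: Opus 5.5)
The plan is to read off the height directly from Theorem~\ref{height} and then minimise over the possible coefficient groupings using Lemma~\ref{min height}.

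Here $\boldsymbol{\mu}=(1^\ell,n)$, so $\wp=\ell+1$ with block sizes $m_1=\cdots=m_\ell=1$ and $m_{\ell+1}=n$. Write $x=\sum_{i=1}^{\ell+1}a_ix_i$. This expression is only defined modulo the relation $\sum_i m_i x_i=0$ in degree $d$, because the Chern classes of the tautological bundles sum to the trivial bundle. I will treat that relation the way \cite{BHH83} does, where the height formula is stated for any representative. Since $\dim H^d=\wp-1$, the class $x$ is nonzero exactly when not all $a_i$ are equal, that is, when the set of distinct values $\{b_1,\dots,b_r\}$ has $r\ge 2$. By Theorem~\ref{height}, the height of $x$ is $A_\lambda=\sum_{p<q}n'_pn'_q$, where $\lambda=(n'_1,\dots,n'_r)$ is the partition of $N:=\ell+n$ obtained by grouping the block sizes $m_i$ according to equal coefficients. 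The hypothesis $r\ge2$ means $\lambda$ has at least two parts.

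The key step is to minimise $A_\lambda$ over the admissible $\lambda$. The only admissible partitions are those coarsening the multiset $\{1^\ell,n\}$ into at least two parts. By the identity $2A_\lambda=N^2-\sum n_i'^2$ from the proof of Lemma~\ref{min height}, merging parts only lowers $A_\lambda$. So the minimum is attained at a two-part coarsening $(k,N-k)$, with value $k(N-k)$. In such a coarsening, one part contains the block of size $n$; call its size $N-k$, so $k\ge1$ and $N-k\ge n$. The function $k(N-k)$ is concave in $k$, so its minimum over $1\le k\le \ell$ is attained at an endpoint. At $k=1$ it equals $N-1=\ell+n-1$. At $k=\ell$ it equals $\ell n$. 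Both are at least $n$ because $\ell\ge1$. Alternatively, Lemma~\ref{min height} applied to all partitions of $N$ with at least two parts gives directly $A_\lambda\ge N-1=n+\ell-1\ge n$.

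The main subtlety is the nonvanishing criterion. I must make sure that "nonzero" corresponds exactly to $r\ge2$, so that the degenerate case $r=1$, which gives height $0$, is excluded. I also need the height formula of Theorem~\ref{height} to hold for the given integer representative. I would justify this by checking that adding a multiple of $\sum_i m_ix_i$ does not change $x$. If needed, I would reduce to the case where the coefficients are chosen up to a common shift, noting that $\sum_i x_i$ with equal coefficients on all blocks is zero in $H^d$. With that settled, the bound follows immediately from the minimisation above.
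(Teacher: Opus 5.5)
Your proposal is correct and follows essentially the paper's route: the paper gets $\mathrm{height}(x)=A_\lambda$ by pulling back the K\"ahler class $y=\sum_j b_jy_j$ from the coarser flag manifold $\mathbb{F}G(\boldsymbol{\nu}_b)$ along the injective $p^*$, which in effect re-proves Theorem~\ref{height}, whereas you quote Theorem~\ref{height} directly; both then minimise with the merging argument of Lemma~\ref{min height}. In that minimisation you are more careful than the paper's write-up, since you restrict to partitions of $\ell+n$ (not of $n$) with at least two parts, and that is exactly what yields the bound $\ell+n-1\ge n$.

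One slip needs fixing: the degree-$d$ relation is $\sum_i x_i=0$ (the tautological bundles sum to a trivial bundle), not $\sum_i m_ix_i=0$. Only with $\sum_i x_i=0$ are your criterion ``$x\neq0$ iff not all $a_i$ are equal'' and the shift-invariance of Theorem~\ref{height} correct, and your final paragraph in fact relies on this relation.
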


\begin{proof}
    Consider $\mathbb{F}G(\boldsymbol{\nu})$, where $\boldsymbol{\nu} = (1^\ell, n)$, and let $x = \sum_{i=1}^{\ell+1} a_i x_i \in H^d(\mathbb{F}G(\boldsymbol{\nu}); \mathbb{Z})$. Let $\{b_1, \ldots, b_r\}$ be the set of distinct values among the coefficients $\{a_1, \ldots, a_\ell\}$, and define $n'_j = \sum_{a_i=b_j} n_i$ for $j = 1, \ldots, r$.
    
    Then $\mathbb{F}G(\boldsymbol{\nu})$ admits the structure of a fibre bundle over $\mathbb{F}G(\boldsymbol{\nu}_b)$, where $\boldsymbol{\nu}_b = (n'_1, \ldots, n'_r)$, with fibre given by a tower of iterated partial flag bundles. We denote the projection map by $p:\mathbb{F}G(\boldsymbol{\nu}) \to \mathbb{F}G(\boldsymbol{\nu}_b)$.

    Consider the class $y = \sum_{j=1}^r b_j y_j \in H^d(\mathbb{F}G(\boldsymbol{\nu}_b); \mathbb{Z})$. Since the coefficients $b_j$ are distinct, Theorem~\ref{kahler class} implies that $y$ is a K\"ahler class. Hence, height of $y$ is $D_b = \dim_{\mathbb{F}}(\mathbb{F}G(\boldsymbol{\nu}_b))$. By naturality of Chern classes, we have $p^*(y) = x$. Moreover, by the Leray--Hirsch theorem, $p^*$ is injective in cohomology. It follows that the height of $x$ is $D_b$.

    Finally, note that $D_b = \sum_{1 \le i < j \le r} n'_i n'_j = A_\lambda$, where $\lambda = (n'_1, \ldots, n'_r) \in \mathcal{P}(n)$ as in Lemma~\ref{min height}. The result now follows from Lemma~\ref{min height}.
\end{proof}

The following proposition characterizes when a complex or quaternionic generalized Milnor manifold is homotopy equivalent to a partial flag manifold. 

\begin{proposition}\label{GMM neq flag}
    Let $\mathbf{n} = (n_1, n_2, \ldots, n_\ell)$ and $\boldsymbol{\mu} = (m_1, m_2, \ldots, m_\wp)$ be two weakly increasing sequences of positive numbers and $\mathbb{F} \in \{\mathbb{C}, \mathbb{H}\}$. Then the generalized Milnor manifold $\mathbb{F}H(\mathbf{n})$ is homotopy equivalent to the partial flag manifold $\mathbb{F}G(\boldsymbol{\mu})$ if and only if:
    $$
        \ell = \wp - 1, \quad n_1 = n_2 = \cdots = n_\ell, \quad m_1 = m_2 = \cdots = m_{\wp-1} = 1, \quad \text{and} \quad m_\wp = n_\ell - \ell + 1.
    $$
\end{proposition}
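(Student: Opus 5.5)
The "if" direction is immediate from Remark~\ref{remarks}(i). If $n_1=\cdots=n_\ell=n$ with $\ell\le n$, then $\mathbb{F}H(\mathbf n)$ is homeomorphic to $\mathbb{F}G(1^\ell,n-\ell+1)$. This is exactly $\mathbb{F}G(\boldsymbol{\mu})$ with $\wp=\ell+1$, $m_1=\cdots=m_{\wp-1}=1$ and $m_\wp=n_\ell-\ell+1$. I would also check the boundary case $n-\ell+1\ge 1$, where the flag manifold is a genuine one of this type.

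For the converse, assume $\mathbb{F}H(\mathbf n)\simeq \mathbb{F}G(\boldsymbol\mu)$. Then the rational cohomology rings agree, and so do the Poincaré polynomials and the dimensions.

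\textbf{Step 1: reduce to an almost complete flag.} The Poincaré polynomial of $\mathbb{F}H(\mathbf n)$ is $\prod_{i=1}^\ell Q_{n_i-i+1}(x)$ with $x=t^d$, by iterated Leray--Hirsch. Comparing the coefficient of $x$, i.e.\ $\dim H^d$, with $\wp-1$ gives $\ell=\wp-1$ after discarding trivial factors $Q_0$.

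Next I want to force $m_1=\cdots=m_{\wp-1}=1$. Since $\mathbf n$ and $\boldsymbol\mu$ are weakly increasing, write $\boldsymbol\mu=(m_1,\dots,m_\wp)$ with $m_\wp$ largest. Then $\mathbb{F}G(\boldsymbol\mu)$ is a tower of Grassmann bundles, and its Poincaré polynomial is a product of Gaussian binomials $\prod_j \binom{m_1+\cdots+m_j}{m_j}_x$.

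If some $m_j\ge2$ with $j<\wp$, I would use the ring structure rather than only Poincaré series. In $H^*(\mathbb{F}H(\mathbf n))$, Remark~\ref{remark heights of generators} shows that $x_1$ has height $n_1$. I would compare minimal heights of nonzero degree-$d$ classes, using Theorem~\ref{height} on the flag side. On the GMM side, $x_1$ pulls back from the base $\mathbb{F}P^{n_1}$, so it has height at most $n_1$. On the flag side, every nonzero class has height at least $\min_\lambda A_\lambda$ over merging patterns of the parts $m_i$. With parts $m_\wp$ and the rest summing to $s$, this minimum is at least $\min(m_i)\cdot(M-\min m_i)$, where $M=\sum m_i = D$-related data. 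This gives an inequality linking $n_1$ to the $m_i$.

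\textbf{Step 2: pin down $m_\wp$ and force $n_1=\cdots=n_\ell$.} Once $\boldsymbol\mu=(1^\ell,m)$, Proposition~\ref{height lower bound} says every nonzero class in $H^d$ has height at least $m$. The ambient dimension is $N=\ell+m-1$, since $\mathbb{F}^{\ell+m}$ is split into $\ell$ lines and an $m$-space. Equating dimensions gives
\[\sum_i n_i-\tfrac{\ell(\ell-1)}2=\tfrac{\ell(\ell-1)}2+\ell m.\]
Since $x_1$ is nonzero of height $n_1$ under the (degree-preserving) ring isomorphism, we get $n_1\ge m$. Moreover the heights satisfy $n_i\le n_\ell$.

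Matching Poincaré polynomials via the cyclotomic argument of Lemma~\ref{same mutisets} gives multiset equality
\[\{n_i-i+1\}_{i=1}^\ell=\{m,m+1,\dots,m+\ell-1\},\]
because the flag $(1^\ell,m)$ is itself a tower of projectivizations with fibres $\mathbb{F}P^{m},\dots,\mathbb{F}P^{m+\ell-1}$. Since $n_i-i+1\le n_\ell-i+1$ and $n_1\ge m$, I would combine these with the ordering to conclude $n_i-i+1=m+\ell-i$. The only way this is compatible with $n_1\ge m$ and weak increase is $n_i=m+\ell-1$ for all $i$, which gives $n_1=\cdots=n_\ell$ and $m=n_\ell-\ell+1$.

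\textbf{Main obstacle.} The hardest step is ruling out $m_j\ge2$ for $j<\wp$ in Step 1. I would first try the Poincaré-polynomial/cyclotomic approach: Gaussian binomials with $m_j\ge2$ produce cyclotomic factors in patterns not realizable as a product of $Q_k$'s with exactly $\wp-1$ factors, because the number of factors is fixed by $b_1$ and the degree is fixed. If that fails, I would fall back on the height comparison: $x_1$ has small height $n_1$, while Theorem~\ref{height} with Lemma~\ref{min height} forces every degree-$d$ class on a flag with a non-line part to have height too large.
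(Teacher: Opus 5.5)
Your outline matches the paper's: Poincaré series give $\ell=\wp-1$, you reduce to $\boldsymbol\mu=(1^\ell,m)$, match fibre dimensions cyclotomically, and use the height of $x_1$ to fix the order. However, two steps do not go through as written.

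\textbf{Reduction to $m_1=\cdots=m_{\wp-1}=1$.} You leave this step open, and your fallback cannot settle it, because heights do not detect the difference. Take $\mathbb FP^4=\mathbb FH(4)$ and $\mathbb FG(2,2)$. They have the same dimension and $\ell=\wp-1=1$. By Theorem~\ref{height}, every nonzero class in $H^d(\mathbb FG(2,2))$ has height $2\cdot 2=4=n_1$, so your height inequality yields no contradiction. The cyclotomic ``pattern'' claim is only a hope.

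The missing idea is the Betti number in degree $2d$, which is what the paper compares. Once $\ell=\wp-1$, both Poincaré polynomials are products of $\ell$ factors $1+x+c\,x^2+\cdots$, so the coefficient of $x^2$ is $\binom{\ell}{2}+\sum c$.
\begin{enumerate}[(i)]
\item Each factor $Q_k$ has $c\le 1$.
\item Each flag fibre $\mathbb FG_{m_j,m_j+\cdots+m_\wp}$ with $j<\wp-1$ has $c\ge 1$.
\item If $m_{\wp-1}\ge 2$ (hence $m_\wp\ge 2$), the last fibre has $c=2$, since both partitions $(2)$ and $(1,1)$ fit in the $m_{\wp-1}\times m_\wp$ box.
\end{enumerate}
So the flag would have coefficient at least $\binom{\ell}{2}+\ell+1$, against at most $\binom{\ell}{2}+\ell$ for the GMM.

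\textbf{Fixing the order.} The inequality $n_1\ge m$ is vacuous. Every element of $\{m,\dots,m+\ell-1\}$ is $\ge m$, so the inequality already follows from the multiset equality and cannot make $n_1$ the largest element. For example, $\mathbf n=(m,m+2,\dots,m+2\ell-2)$ has $n_i-i+1=m+i-1$. It satisfies the multiset equality, the dimension identity, weak monotonicity and $n_1\ge m$, yet $n_1\ne n_2$ for $\ell\ge 2$.

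What you need is the sharp minimal height. A nonzero $x=\sum a_ix_i\in H^d(\mathbb FG(1^\ell,m))$ has at least two distinct coefficients, because a class with all $a_i$ equal is zero. The block sizes $n'_j$ then partition $\ell+m$, not $m$, into at least two parts. So Theorem~\ref{height} and Lemma~\ref{min height} give height at least $\ell+m-1$, attained by the hyperplane class pulled back from $\mathbb FP^{\ell+m-1}$. This is the bound the paper's argument actually uses when it compares $n_1$ with $m-1$ (with $m=\sum m_i$), even though Proposition~\ref{height lower bound} is stated with a weaker bound.

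This bound forces $n_1=m+\ell-1$. Set $a_i:=n_i-i+1$. Then $a_{i+1}\ge a_i-1$ and the $a_i$ are distinct, which forces $a_i=m+\ell-i$, i.e.\ $n_i=m+\ell-1$ for all $i$.

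Finally, your remark about discarding $Q_0$ factors should be made precise rather than absorbed. If some $n_i=i-1$, then $\dim H^d<\ell$. For instance, $\mathbb FH(1,1)\cong\mathbb FP^1=\mathbb FG(1,1)$ violates $\ell=\wp-1$. So the statement, and the paper's proof, tacitly require $n_i\ge i$ for all $i$.
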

\begin{proof}
    Set $y = t^d$, where $d = \dim_{\mathbb{R}}\mathbb{F}$. Since $\mathbb{F}H(\mathbf{n})$ is constructed via iterated projectivizations as in Construction~\ref{construction of GMM fibre bundle} with fibres $\mathbb{F}P^{n_i - i + 1}$ for $i=1, 2, \ldots, \ell$, using the Leray--Hirsch theorem, we have the Hilbert--Poincar\'e series:
    \begin{equation}\label{poincare poly for GMM}
        P_{\mathbb{F}H(\mathbf{n})}(t) = \prod_{i=1}^{\ell} (1+y+\cdots+y^{n_i-i+1}).
    \end{equation}

    The partial flag manifold $\mathbb{F}G(\boldsymbol{\mu})$ is constructed by iterated associated Grassmannian bundles:
    $$
        \mathbb{F}G(\xi_{\wp-1}^\perp) \to \cdots \to \mathbb{F}G(\xi_1^\perp) \to \mathbb{F}G(\xi_0^\perp) \to \{*\},
    $$
    where $\xi_0^\perp := \mathbb{F}^{m}$, $m = \sum_{i=1}^{\wp}m_i$, and $\xi_i^\perp$ denotes the canonical complement bundle over the $i$-th associated Grassmannian bundle $\mathbb{F}G_{m_i}(\xi_{i-1}^\perp)$, for $i=1, 2, \ldots, \wp-1$. For simplicity, denote the fibre $\mathbb{F}G_{m_i, m_i+\cdots+m_\wp}$ in the $i$-th associated Grassmannian bundle $\mathbb{F}G_{m_i}(\xi_{i-1}^\perp)$ by $\mathbb{F}G_i$.

    Now, using Leray--Hirsch Theorem, we can write the Hilbert--Poincar\'e series of the cohomology algebra $H^*(\mathbb{F}G(\boldsymbol{\mu}); \mathbb{Z})$ as:
    \begin{equation}\label{poincare poly for flag}
        P_{\mathbb{F}G(\boldsymbol{\mu})}(t) = \prod_{i=1}^{\wp-1} P_{\mathbb{F}G_i}(t) = \prod_{i=1}^{\wp-1} (1 + b_{i,1}y + b_{i,2} y^2 + \cdots),
    \end{equation}
    where each $b_{i,1} = 1$ and $b_{i,2} \ge 1$. Note that $b_{i,2} = 1$ if and only if the fibre $\mathbb{F}G_{m_i, m_i+\cdots+m_\wp}$ in $\mathbb{F}G_i$ is a projective space.

    The sufficient part of the proof follows from Remark~\ref{remarks}(i).

    For the necessary part of the proof, assume that $\mathbb{F}H(\mathbf{n})$ and $\mathbb{F}G(\boldsymbol{\mu})$ are homotopy equivalent. It follows that their cohomology algebras are isomorphic as graded algebras and consequently, the coefficients of $y^i$ in \eqref{poincare poly for GMM} and \eqref{poincare poly for flag} coincide for all $i$. Comparing the coefficients of $y = t^d$ in \eqref{poincare poly for GMM} and \eqref{poincare poly for flag}, we obtain $\ell = \wp - 1$.

    If at least two of the $m_i$ in $\boldsymbol{\mu}$ exceed $1$, then some $b_{i,2} \ge 2$. This follows because $b_{i,2}$ counts the number of Schubert cells of real dimension $2d$, which corresponds to the number of integer partitions of $2$ fitting into an $m_i \times (\sum_{j>i} m_j)$ rectangle; whenever $m_i , \sum_{j>i} m_j\ge 2$, both partitions $(2)$ and $(1,1)$ are valid, yielding $b_{i,2} \ge 2$. Hence, it follows that the coefficient of $y^2$ in $P_{\mathbb{F}G(\boldsymbol{\mu})}(t)$ is strictly greater than the coefficient of $y^2$ in $P_{\mathbb{F}H(\mathbf{n})}(t)$, which is a contradiction. 
    Thus, at most one $m_i > 1$. Since $\boldsymbol{\mu}$ is weakly increasing, we have $m_1 = \cdots = m_{\wp-1} = 1$. In this case,
    \begin{equation}\label{new poincare poly of flag}
        P_{\mathbb{F}G(\boldsymbol{\mu})}(t) = \prod_{i=1}^{\ell}(1+y+\cdots+y^{m-i}),
    \end{equation}
    which agrees with $P_{\mathbb{F}H(\mathbf{n})}(t)$ if and only if the multisets $S_H := \{n_1, n_2-1, \ldots, n_\ell-(\ell-1)\}$ and $S_G := \{m-1, m-2, \ldots, m-\ell\}$ coincide, using Lemma~\ref{same mutisets}.

    It remains to show that $S_H$ and $S_G$ agree as ordered multisets, that is, $n_i - i + 1 = m - i$ for all $i$. Consequently, it will imply $n_1 = n_2 = \cdots = n_\ell = m-1$.

    Note that different orderings of $S_H$ give rise to different towers of projectivizations in the construction of a generalized Milnor manifold. Moreover, not every ordering yields such a construction. An ordering of $S_H$ produces a valid tower of projectivizations if and only if it satisfies:
    \begin{equation}\label{ordering condition}
        \text{\big($i$-th term\big)} - \text{\big($(i+1)$-th term\big)} \ge 1 \quad \text{for all } i.
    \end{equation}
   
    If an ordering of $S_H$ begins with the largest entry $m-1$, then the only way to extend it to a full ordering satisfying \eqref{ordering condition} is the ordered set $S_G$. In this case, the resulting generalized Milnor manifold is the partial flag manifold $\mathbb{F}G(1^\ell, n_\ell-\ell+1)$.

    Now suppose that the ordered multiset $S_H$ does not begin with $m-1$, i.e., $n_1 \neq m-1$, so that $n_1 < m-1$. It follows that the generator $x_1$ in the cohomology algebra, as described in Theorem~\ref{cohom of GMM}, has height $n_1 < m-1$. This leads to a contradiction, since there is no degree $d$ nonzero cohomology class $x \in H^d\big(\mathbb{F}G(1^\ell, n); \mathbb{Z}\big)$ such that $x^{n_1} \neq 0$ but $x^{n_1+1} = 0$, using Proposition~\ref{height lower bound}.
    Therefore, $S_H = S_G$ as ordered multisets, and this completes the proof.
\end{proof}

\subsection{The Real Case}

In the following proposition, we consider real generalized Milnor manifolds and obtain a weaker conclusion than in Proposition~\ref{GMM neq flag}.

\begin{proposition}\label{prop_GMM=flag}
    Let $\mathbf n=(n_1,n_2,\ldots, n_\ell)$ and ${\boldsymbol{\mu}}=(m_1,m_2,\ldots, m_\wp)$ be two weakly increasing sequences of positive integers. Let us consider the following statements:
   \begin{enumerate}[(1)]
    \item The generalized Milnor manifold $\mathbb{R}H(\mathbf{n})$ is homotopy equivalent to the partial flag $\mathbb{R}G(\boldsymbol{\mu})$.

    \item $m_1 = m_2 = \cdots = m_{\wp-1} = 1,\quad \wp=\ell+1, \quad  
    m_\wp = n_\ell - \ell + 1,\quad \text{and }
    n_1 = n_2 = \cdots = n_\ell.$

    \item $m_1 = m_2 = \cdots = m_{\wp-1} = 1,$ and 
    $\{m-i\}_{i=1}^\ell=\{n_i - i + 1\}_{i=1}^\ell$ as multisets.
\end{enumerate}
    Then, \textit{(1)}$\implies$\textit{(3)}, \textit{(2)}$\implies$\textit{(3)}, and  \textit{(2)}$\implies$\textit{(1)}. 
   \end{proposition}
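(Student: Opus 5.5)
We prove the three implications separately, reusing the Poincaré-series arguments from the proof of Proposition~\ref{GMM neq flag} with coefficients $\mathbb{K}=\mathbb{Z}_2$ and $y=t$ (so $d=1$).

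\emph{(2)$\implies$(1).} Assume (2), and put $n=n_1=\cdots=n_\ell$. Then $m=\sum m_i=\ell+(n-\ell+1)=n+1$. By Remark~\ref{remarks}(i), $\mathbb{R}H(\mathbf n)$ is homeomorphic to $\mathbb{R}G(1^\ell,n-\ell+1)=\mathbb{R}G(\boldsymbol\mu)$, so in particular the two are homotopy equivalent.

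\emph{(2)$\implies$(3).} Under (2) we have $m=n+1$, so $m-i=n-i+1=n_i-i+1$ for every $i$. The two families therefore agree even as ordered lists, hence as multisets.

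\emph{(1)$\implies$(3).} This is the main step. A homotopy equivalence gives an isomorphism of $\mathbb{Z}_2$-cohomology, so the mod-$2$ Poincaré series agree.
\begin{itemize}
\item By Leray--Hirsch with $\mathbb{Z}_2$ coefficients, applied to the tower of Construction~\ref{construction of GMM fibre bundle}, $P_{\mathbb{R}H(\mathbf n)}(t)=\prod_{i=1}^{\ell}(1+t+\cdots+t^{n_i-i+1})$.
\item The iterated real Grassmannian bundle tower gives $P_{\mathbb{R}G(\boldsymbol\mu)}(t)=\prod_{i=1}^{\wp-1}P_{\mathbb{R}G_i}(t)$. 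Each real Grassmannian has mod-$2$ Betti numbers given by Schubert cell counts, i.e.\ Gaussian binomial coefficients at $q=t$. Its linear coefficient is $1$, and its quadratic coefficient is at least $2$ unless it is a projective space.
\end{itemize}

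Comparing coefficients of $t$ gives $\ell=\wp-1$. If two of the $m_i$ are at least $2$, some fibre $\mathbb{R}G_i$ has quadratic coefficient $b_{i,2}\ge2$. Then the coefficient of $t^2$ on the flag side is $\binom{\ell}{2}+\sum_i b_{i,2}$, which exceeds the GMM value $\binom{\ell}{2}+\#\{i: n_i-i+1\ge2\}$. This is a contradiction, so, $\boldsymbol\mu$ being weakly increasing, $m_1=\cdots=m_{\wp-1}=1$.

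The flag tower then consists of projective bundles with fibres $\mathbb{R}P^{m-i}$, $i=1,\dots,\ell$. Its Poincaré polynomial is $\prod_{i=1}^{\ell}(1+t+\cdots+t^{m-i})$. Equating the two products and applying the cyclotomic factorization argument of Lemma~\ref{same mutisets} gives the multiset equality $\{m-i\}_{i=1}^{\ell}=\{n_i-i+1\}_{i=1}^{\ell}$. That argument uses only Poincaré polynomials, so it works over $\mathbb{Z}_2$ exactly as over $\mathbb{Q}$.

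The delicate point is the estimate $b_{i,2}\ge2$ for mod-$2$ Betti numbers of non-projective real Grassmannians. It follows from the Schubert cell decomposition, whose mod-$2$ cellular boundary maps all vanish, so both partitions $(2)$ and $(1,1)$ contribute.

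The upgrade from (3) to (2), which Proposition~\ref{GMM neq flag} obtains from the height bound of Proposition~\ref{height lower bound}, is not available here. That bound relies on Kähler classes (Theorem~\ref{kahler class}), which have no real analogue. This is why only the weaker conclusion is claimed in the real case.
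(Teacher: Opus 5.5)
Your proposal is correct and follows the paper's own proof. The paper says (1)$\implies$(3) is the Poincar\'e-series and cyclotomic-factorization argument of Proposition~\ref{GMM neq flag}, rerun over $\mathbb{Z}_2$ and stopping before the K\"ahler height bound, and that the other two implications are immediate from Remark~\ref{remarks}(i). One caveat applies to both your proof and the paper's: reading off $\ell=\wp-1$ from the coefficient of $t$ needs every fibre $\mathbb{R}P^{n_i-i+1}$ to be positive-dimensional, i.e.\ $n_i\ge i$, and without this the statement genuinely fails, since $\mathbb{R}H(1,1,2)\cong\mathbb{R}P^1=\mathbb{R}G(1,1)$ while $\{1,0,-1\}\neq\{1,0,0\}$, so you should state that assumption explicitly.
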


\begin{proof}
The proof of \textit{(1)}$\implies$\textit{(3)} proceeds in the same way as in Theorem~\ref{GMM neq flag} to prove $S_H = S_G$ as unordered multisets.

The implications \textit{(2)}$\implies$\textit{(3)} and \textit{(2)}$\implies$\textit{(1)} are immediate.
\end{proof}

In the above proposition, it is natural to seek a proof of \textit{(1)}$\implies$\textit{(2)} rather than \textit{(1)}$\implies$\textit{(3)}. While we were not able to establish this stronger implication, we obtained the weaker conclusion \textit{(1)}$\implies$\textit{(3)}. Nevertheless, a restricted result holds in the case of real Milnor manifolds $\mathbb RH(n_1,n_2)$, which we describe in the following proposition.

\begin{proposition}\label{milnor not flag}
    In Proposition~\ref{prop_GMM=flag}, if we restrict to $\mathbb{F}=\mathbb{R}$ and $\ell=2$, i.e., to the case of real Milnor manifolds, then \textit{(1)}$\iff$\textit{(2)}.
\end{proposition}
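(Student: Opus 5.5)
We will prove \textit{(1)}$\implies$\textit{(2)}; the converse \textit{(2)}$\implies$\textit{(1)} is already contained in Proposition~\ref{prop_GMM=flag}. The plan is to reduce to a single exceptional case using \textit{(1)}$\implies$\textit{(3)}, and then exclude that case with a height invariant of degree-one classes in mod-$2$ cohomology.

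\emph{Reduction.} Assume $\mathbb RH(n_1,n_2)\simeq \mathbb RG(\boldsymbol\mu)$. By Proposition~\ref{prop_GMM=flag}, \textit{(3)} holds. Comparing first Betti numbers mod $2$ (as in the proof of Proposition~\ref{GMM neq flag}) gives $\wp=3$ and $m_1=m_2=1$. Put $m=m_1+m_2+m_3$. Then $\{m-1,m-2\}=\{n_1,n_2-1\}$ as multisets, so exactly one of the following holds.
\begin{enumerate}[(a)]
\item $n_1=m-1$ and $n_2=m-1$. This is precisely \textit{(2)}, since then $m_3=m-2=n_2-2+1$.
\item $n_1=m-2$ and $n_2=m$.
\end{enumerate}
It therefore remains to show that (b) cannot occur, i.e.\ that $\mathbb RH(m-2,m)\not\simeq \mathbb RG(1,1,m-2)$.

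\emph{Invariant.} A homotopy equivalence induces a graded ring isomorphism on $H^*(-;\mathbb Z_2)$. Such an isomorphism restricts to a bijection between the three nonzero classes of $H^1\cong\mathbb Z_2^2$ and preserves the height of each class. So the multiset of heights of the three nonzero degree-one classes is a homotopy invariant, and we compare it on the two sides.

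\emph{Height counts.} By Remark~\ref{remarks}(i), $\mathbb RG(1,1,m-2)\cong\mathbb RH(m-1,m-1)$. Remark~\ref{remark heights of generators} then shows that both generators $x_1,x_2$ have height $m-1$, so at most one nonzero class in $H^1$ (namely $x_1+x_2$) has height different from $m-1$. For $\mathbb RH(m-2,m)$, the same remark gives heights $m-2$ for $x_1$ and $m$ for $x_2$, so at least two nonzero classes have height different from $m-1$. The two height multisets therefore cannot agree, which contradicts (b).

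The main obstacle is resisting a direct computation of the height of $x_1+x_2$ in the flag manifold. That class pulls back from $w_1$ of $\mathbb RG_{2,m}$, whose height depends on the $2$-adic position of $m$ and can exceed $m-1$, so a simple bound on it would not suffice. The counting argument above avoids this, since it only uses the heights of $x_1$ and $x_2$ from Remark~\ref{remark heights of generators} and never needs the height of $x_1+x_2$ on either side.
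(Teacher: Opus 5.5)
Your proof is correct, and it rules out case (b) with a different invariant from the one the paper uses. The reduction is the same in both: use \textit{(1)}$\implies$\textit{(3)} to get the two possibilities, and identify $\mathbb RG(1,1,m-2)\cong\mathbb RH(m-1,m-1)$.

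\textbf{The paper's route.} It excludes $n_1=m-2$, $n_2=m$ by orientability. It cites the criterion that $\mathbb RH(p,q)$ is orientable if and only if $p$ and $q$ are both even (essentially the computation $w_1=n_1x_1+n_2x_2$). Since $m-2$ and $m$ have the same parity and $m-1$ has the opposite parity, exactly one of $\mathbb RH(m-1,m-1)$ and $\mathbb RH(m-2,m)$ is orientable.

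\textbf{Your route.} You use the multiset of heights of the three nonzero classes in $H^1(-;\mathbb Z_2)$. Remark~\ref{remark heights of generators} shows that the flag side has two distinct classes of height $m-1$. In $\mathbb RH(m-2,m)$ the classes $x_1$ and $x_2$ have heights $m-2$ and $m$, so at most one class has height $m-1$. Your observation that the height of $x_1+x_2$ is never needed is exactly what makes this clean.

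\textbf{Comparison.} The paper's argument is a one-liner once the external orientability criterion is granted. Yours is self-contained within the paper's own cohomology computation. It also gives more: the two mod-$2$ cohomology rings are already non-isomorphic as graded rings, whereas orientability is not in general detected by the bare mod-$2$ ring structure.

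\textbf{One point to state explicitly.} You need $m=m_1+m_2+m_3\ge 3$, which holds because $m_3\ge 1$. It is used twice: for Remark~\ref{remarks}(i) to apply (it needs $2\le m-1$), and for $x_1\neq 0$ with height $m-2\ge 1$ in $\mathbb RH(m-2,m)$, which in turn is what makes $x_2^{m}\neq 0$.
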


\begin{proof}
         By Proposition~\ref{prop_GMM=flag}, we have \textit{(1)}$\implies$\textit{(3)}. We show that for real classical Milnor manifolds, ${\textit{(1)}\implies\textit{(2)}}$. The multisets in \textit{(3)} are $\{m-1,m-2\}$ and $\{n_1, n_2-1\}$. Now, there are two possibilities:
         \begin{align}
             \text{(i) } n_1=m-1,\quad n_2-1=m-2,\quad \quad \quad \text{(ii) }n_1=m-2,\quad n_2-1=m-1.
         \end{align}

    For the possibility (i), it is immediate that \textit{(3)}$\implies$\textit{(2)} and we are done.

    If possibility (ii) holds, we have $n_1=m-2$ and $n_2=m.$ Thus, $n_1, n_2$ have the same parity. Using Remark~\ref{remarks}~(i), we note that $\mathbb RG(1,1, m-2)$ is homeomorphic to $\mathbb RH(m-1,m-1)$. We need to compare $\mathbb RH(m-1,m-1)$ and $\mathbb RH(m-2,m)$.
    
    It is known (see \cite{HKM04}) that a real Milnor manifold $\mathbb{R}H(p,q)$ is orientable if and only if both $p$ and $q$ are even. Hence, exactly one of $\mathbb{R}H(m-1,m-1)$ and $\mathbb{R}H(m,m+2)$ is orientable, while the other is non-orientable. Therefore, they cannot be homotopy equivalent, and consequently possibility (ii) cannot occur.

    Therefore, here only possibility is (i) and this completes the proof.
\end{proof}

\section{Interpolation between partial flag manifolds via GMM}\label{Sec_Interpolation}
We have already observed in Proposition~\ref{prop_GMM=flag} the  situations in which generalized Milnor manifolds coincide with partial flag manifolds. In those cases, they are partial flag manifolds $\mathbb FG(\boldsymbol{\nu})$ of type $\boldsymbol{\nu}=(1^\ell,n)$.

Here we describe a family of generalized Milnor manifolds that interpolate between two partial flag manifolds of this type. More precisely, starting from the generalized Milnor manifold $\mathbb FH(n_1,n_2,\ldots, n_\ell)$, we obtain a sequence of generalized Milnor manifolds by modifying the vector bundles appearing in the successive projectivizations. These manifolds fit into a chain of natural embeddings of intermediate spaces between the two partial flag manifolds of type $(1^\ell, n)$.
Further, if we consider two different partial flag manifolds $\mathbb FG(\boldsymbol{\nu}_1)$ and $\mathbb FG(\boldsymbol{\nu}_2)$, where $\boldsymbol{\nu_1}=(1^\ell,p)$ and $\boldsymbol{\nu_2}=(1^\ell,q)$
    with $p<q,$ one can construct a sequence of generalized Milnor manifolds that fit into the natural embedding $\mathbb FG(\boldsymbol{\nu}_1)\hookrightarrow \mathbb FG(\boldsymbol{\nu}_2)$.

Using the Definition~\ref{defn of GMM equations} of generalized Milnor manifolds, we observe the following:
\begin{remark}\label{n<m FH(n)<FH(m)}
For two weakly increasing sequences $\mathbf n=(n_1,n_2,\ldots, n_\ell)$ and $\mathbf m=(m_1,m_2,\ldots, m_\ell)$ such that $n_i\le m_i$ for $ i=1,2\ldots ,\ell$, there is a natural embedding $\mathbb FH(\mathbf n)\hookrightarrow \mathbb F H(\mathbf m)$. \qed   
\end{remark}

As a consequence of Remark \ref{n<m FH(n)<FH(m)}, one obtains the following sequence of embeddings: (Boxes are used for emphasis only.)
\begin{equation}\label{combined_inclusions}
\mathbb FH(\mathbf n_1)\hookrightarrow\cdots \hookrightarrow \mathbb FH(\mathbf n_{\ell-1})
\hookrightarrow  \boxed{\mathbb FH(\mathbf n)}
\hookrightarrow \mathbb FH(\mathbf n_1') \hookrightarrow \cdots \hookrightarrow \mathbb FH(\mathbf n_{\ell-1}'),
\end{equation} where
\begin{align}\label{sequences}
\begin{aligned}
        &\mathbf n_{\ell-1}=\big(n_1,n_2,\ldots,n_{\ell-2},\boxed{n_{\ell-1}, n_{\ell-1}}\big),\\
        &\mathbf n_{\ell -2}=\big(n_1,n_2,\ldots,\boxed{n_{\ell-2},n_{\ell-2}, n_{\ell-2}}\big),\\
        &\quad\quad \vdots\\
        &\mathbf n_{1}=\big(\boxed{n_1,n_1,\ldots, n_{1},n_{1},n_{1}}\big),
\end{aligned}
\quad \text{and}\quad
\begin{aligned}
         &\mathbf n_{1}'=\big(\boxed{n_2,n_2},n_3, \ldots, n_{\ell}\big),\\
        &\mathbf n_{ 2}'=\big(\boxed{n_3,n_3,n_3}, \ldots, n_{\ell}\big),\\
        &\quad\quad \vdots\\
        &\mathbf n_{\ell-1}'=\big(\boxed{n_\ell,n_\ell, n_\ell,\ldots, n_{\ell}}\big).
\end{aligned}
\end{align}

     Using the Remark \ref{remarks} (ii), we have the following homeomorphisms: 
    \begin{align*}
        \mathbb FH(\mathbf n_{\ell -1}')\cong \mathbb FG(\boldsymbol{\nu^+}), \quad \text{where }\boldsymbol{\nu^+}=(1^\ell,\, n_\ell-\ell+1) & \text{ and }&\\
        \mathbb FH(\mathbf n_{1})\cong \mathbb FG(\boldsymbol{\nu^-}),\quad\text{where }\boldsymbol{\nu^-}=(1^\ell,\, n_1-\ell+1)
        .& &
    \end{align*}

    The sequence of embeddings in~\eqref{combined_inclusions} shows that the partial flag manifolds $\mathbb FH(\mathbf n_1)$ and $\mathbb FH(\mathbf n'_{\ell-1})$ are interpolated by a sequence of embeddings of generalized Milnor manifolds, with $\mathbb FH(\mathbf n)$ lying between two partial flag manifolds as follows: 
    \begin{equation}\label{flag GMM flag}
        \mathbb FG(\boldsymbol{\nu^-}) \lhook\joinrel \xrightarrow{\quad  \quad} \mathbb FH(\mathbf n)\lhook\joinrel\xrightarrow{\quad  \quad} \mathbb FG(\boldsymbol{\nu^+}).
    \end{equation}

    Now, let us consider two weakly increasing finite sequences of same length of the form:  
    \[
        \mathbf n=(n_1, n_2, \ldots, n_{i-1},n_i, n_{i+1},\ldots , n_{\ell})\text{ and }  \mathbf n^+=(n_1, n_2, \ldots, n_{i-1},n_i+1, n_{i+1},\ldots , n_{\ell}).
    \]

    Recall from Definition~\ref{construction of GMM fibre bundle} that $\mathbb FH(\mathbf n)$ and $\mathbb FH(\mathbf n^+)$ are constructed as iterated projectivizations. The generalized Milnor manifold $\mathbb FH(\mathbf n^+)$ is obtained from $\mathbb FH(\mathbf n)$ by replacing the $i$-th projectivization $\mathbb P(\xi_{i-1}^\perp \oplus \varepsilon_{\mathbb F}^{\,n_i-n_{i-1}})$ with $\mathbb P(\xi_{i-1}^\perp \oplus \varepsilon_{\mathbb F}^{\,n_i+1-n_{i-1}})$, while leaving all other stages unchanged. This yields a natural embedding
    \begin{equation}\label{inclusion: i to i+1}
        \iota\colon \mathbb FH(\mathbf n)\hookrightarrow \mathbb FH(\mathbf n^+).
    \end{equation}

    For each stage $j$, the canonical line bundles $\xi_j$ and their complementary bundles $\xi_j^\perp$ on $\mathbb FH(\mathbf n^+)$ pull back under $\iota$ to the corresponding bundles on $\mathbb FH(\mathbf n)$, except at the $(i+1)$-th stage. At this stage, we have $\iota^*(\xi_{i+1}) = \xi_{i+1}$ and $\iota^*(\xi_{i+1}^\perp) \cong \xi_{i+1}^\perp \oplus \varepsilon_{\mathbb F}$.

    By Proposition~\ref{cohom of GMM}, the cohomology algebra is generated by the characteristic classes of the bundles $\xi_j$ and $\xi_j^\perp$ (appropriate to $\mathbb F$). It follows, by naturality of characteristic classes, that the induced map $\iota^*$ is surjective on cohomology with $R$-coefficients. Moreover, $i^*$ is an isomorphism on $d$-th cohomology, where $d:=\dim_{\mathbb R}(\mathbb F)$. Consequently, each inclusion in~\eqref{combined_inclusions} induces an epimorphism in cohomology algebra and an isomorphism on $d$-th cohomology group in $R$-coefficients. 

    Observe that the embedding in Remark~\ref{n<m FH(n)<FH(m)} is a composition of inclusions of the type described in~\eqref{inclusion: i to i+1}. Thus, the embedding induces an epimorphism in $R$-cohomology.

    Additionally, we note that the inclusion $\mathbb FH(\mathbf n)\hookrightarrow \mathbb F H(\mathbf m)$ in Remark~\ref{n<m FH(n)<FH(m)} induces isomorphism in the $d$-th cohomology groups, where $d=\dim_{\mathbb R}(\mathbb F)$. It follows that every $\mathbb F$-line bundle over $\mathbb FH(\mathbf n)$ extends to $\mathbb FH(\mathbf m)$. Consequently, the isomorphism classes of $\mathbb F$-line bundles over $\mathbb FH(\mathbf n)$ and $\mathbb FH(\mathbf m)$ spaces are in bijection.

    In the following propositions, we investigate the optimality of the embedding $\mathbb FG(\boldsymbol{\nu}_1)\hookrightarrow \mathbb FH(\mathbf n)$. More precisely, we ask whether the partial flag manifold $\mathbb FG(\boldsymbol{\nu}_1)$ has the largest possible dimension among all partial flag manifolds that embed into $\mathbb FH(\mathbf n)$ such that the inclusion induces an epimorphism  in cohomology and an isomorphism in $d$-th cohomology.

\begin{proposition}\label{Prop_interpolation}
    Let $\mathbb FH(\mathbf{n})$ be a generalized Milnor manifold associated to a weakly increasing sequence $\mathbf{n}=(n_1,n_2,\ldots,n_\ell)$ and $\mathbb F\in \{\mathbb C, \mathbb H\}$. Consider an embedding
    \begin{equation}\label{inclusions}
        \mathbb{F}G(\boldsymbol{\nu}_1)\hookrightarrow \mathbb FH(\mathbf{n})
    \end{equation}
    with the inclusion inducing epimorphism in cohomology and isomorphism in $d$-th cohomology. Then     \[
        \dim\big(\mathbb FG(\boldsymbol{\nu}_1)\big)\le \dim\big(\mathbb FH(\mathbf{n}_1)\big)
        \quad\text{where }\mathbf n_{1}=\big(n_1^\ell\big)\text{ as in }\eqref{sequences}.
    \]
\end{proposition}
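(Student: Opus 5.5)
The plan is to extract two pieces of information from the hypotheses on the embedding $j\colon\mathbb FG(\boldsymbol{\nu}_1)\hookrightarrow\mathbb FH(\mathbf n)$. The first is the shape of $\boldsymbol{\nu}_1$. The second is an upper bound on the total size $M=\sum_i m_i$ of $\boldsymbol{\nu}_1=(m_1,\ldots,m_\wp)$, coming from the height of the generator $x_1$. Combining the two will bound the dimension directly. Throughout I would work with rational coefficients; the integral surjectivity and the $H^d$-isomorphism pass to $\mathbb Q$.

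\emph{Step 1 (number of blocks).} Since $j^*$ is an isomorphism on $H^d$, and by Theorem~\ref{cohom of GMM} the group $H^d(\mathbb FH(\mathbf n))$ has rank $\ell$ while $H^d(\mathbb FG(\boldsymbol{\mu}))$ has rank $\wp-1$, we get $\wp=\ell+1$.

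\emph{Step 2 (shape of $\boldsymbol{\nu}_1$).} By Theorem~\ref{cohom of GMM}, $H^*(\mathbb FH(\mathbf n))$ is generated by the degree-$d$ classes $x_1,\dots,x_\ell$. Since $j^*$ is onto, $H^*(\mathbb FG(\boldsymbol{\nu}_1))$ is then generated by $H^d$. I then want to show that at most one $m_i$ exceeds $1$. I would try a Betti-number count in degree $2d$ first.
\begin{itemize}
\item The products of degree-$d$ classes span $H^{2d}$, and there are at most $\binom{\ell+1}{2}$ such monomials in $x_1,\dots,x_\ell$. Hence $b_{2d}(\mathbb FG)\le b_{2d}(\mathbb FH)$ by surjectivity.
\item On the other hand, the Schubert-cell count used in the proof of Proposition~\ref{GMM neq flag} gives a fibre factor with $b_{i,2}\ge 2$ as soon as some $m_i\ge2$ occurs together with $\sum_{k>i}m_k\ge 2$.
\end{itemize}
I am not fully sure this count alone always wins. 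If it does not, I would instead use the description of $H^*(\mathbb FG(\boldsymbol{\mu});\mathbb Q)$ as a quotient of $\bigotimes_i H^*(BU(m_i))$ (respectively $BSp$). There I would argue that $c_2(V_i)$ for a block with $m_i\ge2$ is not in the subalgebra generated by the $c_1(V_k)$, provided the other blocks have total rank $\ge 2$. This step is the main obstacle. Up to reordering the blocks, the conclusion of this step should be $\boldsymbol{\nu}_1=(1^\ell,p)$ with $M=\ell+p$.

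\emph{Step 3 (height bound).} The class $j^*(x_1)$ is nonzero, because $j^*$ is injective on $H^d$. It also satisfies $j^*(x_1)^{n_1+1}=0$, since $x_1^{n_1+1}=0$ by Remark~\ref{remark heights of generators}. Now write $j^*(x_1)=\sum a_ix_i$. A nonzero class cannot have all coefficients equal, because $\sum x_i=0$ in $H^d$. So the partition $\lambda$ of $M$ formed in Theorem~\ref{height} has at least two parts. By Theorem~\ref{height} and the argument of Lemma~\ref{min height} (the minimum of $A_\lambda$ over partitions with at least two parts is $A_{(1,M-1)}=M-1$), the height of $j^*(x_1)$ is at least $M-1=\ell+p-1$. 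This is the same reasoning as Proposition~\ref{height lower bound}, applied to the full partition of $M$. Hence $\ell+p-1\le n_1$, i.e.\ $p\le n_1-\ell+1$.

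\emph{Step 4 (conclusion).} Finally,
\[
\dim_{\mathbb F}\mathbb FG(1^\ell,p)=\ell p+\tfrac{\ell(\ell-1)}{2}\le \ell(n_1-\ell+1)+\tfrac{\ell(\ell-1)}{2}=\ell n_1-\tfrac{\ell(\ell-1)}{2},
\]
which is $\dim_{\mathbb F}\mathbb FH(n_1^\ell)$ by \eqref{Eq_dimension of GMM}. Equality is realized by $\mathbb FH(\mathbf n_1)\cong\mathbb FG(\boldsymbol{\nu}^-)$ from \eqref{flag GMM flag}.
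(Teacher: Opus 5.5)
Your proposal is correct and follows the paper's proof step for step: the rank of $H^d$ gives $\wp=\ell+1$, a degree-$2d$ Betti comparison forces $\boldsymbol{\nu}_1=(1^\ell,p)$, and Theorem~\ref{height} applied to the nonzero class $j^*(x_1)$ gives $p\le n_1-\ell+1$, which is the paper's $p\le n_1$ after reindexing. Your observation that a nonzero degree-$d$ class must have at least two distinct coefficients is exactly what is needed to apply Lemma~\ref{min height} legitimately. The Betti count you doubted always closes Step~2, so the $c_2$ fallback is unnecessary. For any ordering of the $\ell+1$ blocks, each stage $i\le\ell-1$ has $\sum_{k>i}m_k\ge 2$, hence $b_{i,2}=1$ or $2$ according as $m_i=1$ or $m_i\ge 2$. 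The last stage has $b_{\ell,2}=\#\{k\in\{\ell,\ell+1\}: m_k\ge 2\}$. Together these give $b_{2d}(\mathbb FG(\boldsymbol{\nu}_1))=\binom{\ell+1}{2}-1+\#\{i: m_i\ge 2\}$, to be compared with $b_{2d}(\mathbb FH(\mathbf n))\le\binom{\ell+1}{2}$, so at most one block exceeds $1$.
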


\begin{proof}
    Note that a partial flag manifold can be realized as an iterated tower of Grassmannian bundles, each associated to the preceding canonical complement bundle. Each such Grassmannianization contributes one generator to the $d$-th cohomology with $R$-coefficients, where $d:=\dim_{\mathbb{R}}(\mathbb{F})$, increasing the rank of $d$-th cohomology by 1.

    Since the inclusions in \eqref{inclusions} induce isomorphisms on $d$-th cohomology, it follows that the heights of the towers corresponding to $\mathbb FG(\boldsymbol{\nu}_1)$ and $\mathbb FG(\boldsymbol{\nu}_2)$ coincide with that of $\mathbb FH(\mathbf{n})$.

    Proceeding as in the proof of Proposition~\ref{GMM neq flag}, we deduce that the tower of associated Grassmannian bundles for $\mathbb FG(\boldsymbol{\nu}_1)$ can consist only of projectivizations. Indeed, if a  Grassmannianization stage were present which is not a projectivization, then the $2d$-th Betti number of $\mathbb FG(\boldsymbol{\nu}_1)$ would exceed that of $\mathbb FH(\mathbf{n})$, contradicting the fact that the inclusion $\mathbb FG(\boldsymbol{\nu}_1)\hookrightarrow \mathbb FH(\mathbf{n})$ induces an epimorphism in cohomology.
    Thus, the partial flag manifold $\mathbb FG(\boldsymbol{\nu}_1)$ corresponds to a sequence $\boldsymbol{\nu}_1$ of the form
    \[
        \boldsymbol{\nu}_1=(1^\ell,\, p-\ell+1)
        \quad \text{for some } p>\ell-1.
    \] Consequently, $\mathbb FG(\boldsymbol{\nu_1})$ becomes homeomorphic to $\mathbb FH(\mathbf p)$, where $\mathbf p=(p^\ell)$.

    Now, it remains to show that $p \le n_1$. Suppose, for contradiction, that $p > n_1$. Recall from Theorem~\ref{cohom of GMM} the presentation of the cohomology ring $H^*(\mathbb FH(\mathbf n);R)$. Note that, the generator $x_1 \in H^*(\mathbb FH(\mathbf n);R)$ has height $n_1$.

    On the other hand, by Theorem~\ref{height} together with Lemma~\ref{min height}, every nonzero element of positive degree in $H^*(\mathbb FG(\boldsymbol{\nu}_1);R)$ has height at least $p$. Since $p > n_1$, this contradicts the existence of the induced epimorphism in cohomology arising from the inclusion
    $ \mathbb FG(\boldsymbol{\nu}_1)\hookrightarrow \mathbb FH(\mathbf n).$
    Therefore, we must have $p \le n_1$, which completes the proof.
\end{proof}


\section{Higher Topological Complexity of GMM}\label{Sec_TC}

Farber \cite{Far03} introduced the notion of topological complexity $(\TC)$ to measure the inherent discontinuity present in any motion planning algorithm operating in a configuration space. The definition is motivated by the notion of Schwarz genus of fibrations, see \cite{Sva66}. This invariant has become an interesting area of study for topologists in recent years \cite{Rud10, Dav18, GM20, NS20}. In our work, we computed the exact value of the higher topological complexity of complex and quaternionic generalized Milnor manifolds. We also come up with the bounds of topological complexity in real case.

The \emph{sectional category} of a Hurewicz fibration $p: E\to B$, denoted by $\secat(p)$, is defined as the least non-negative integer $k$ such that there exists an open cover $\{U_0, \dots, U_k\}$ of $B$, where each open set $U_i$ admits a continuous section $s_i \colon U_i \to E$ of $p$. If no such $k$ exists, set $\secat(p)=\infty$.

Let $X$ be a path-connected space and $X^I$ be the free path space 
equipped with compact-open topology. Consider the fibration $$\pi \colon X^I \to X \times X, \quad \text{ defined by } \pi(\alpha)=(\alpha(0), \alpha(1)).$$ 
The \emph{topological complexity} $\TC(X)$ of $X$ is defined as $\TC(X) := \secat(\pi)$, see \cite{Far03}. 

Rudyak \cite{Rud10} generalized the notion of $\TC$ as follows.
For an integer $r \geq 2$, the \emph{$r$-th topological complexity} of a path-connected space $X$, denoted by $\TC_r(X)$, defined as the Schwarz genus of the fibration 
    \begin{equation}\label{Eq_fibration Pi r}
        \Pi_r \colon X^{I_r} \to X^r \quad \text{defined as }\Pi_r(\alpha)=(\alpha(1_1), \dots, \alpha(1_r)),
    \end{equation}
i.e. $\TC_r(X):=\secat (\Pi_r)$, where $I_r$ is the wedge of $r$ closed intervals $[0,1]$, each with $0$ as the base point  and $1_i$ stands for $1$ in the $i$-th interval.
Note that $\TC_2(X)=\TC(X)$.

Another topological invariant named \textit{LS-category}, denoted by $\cat$, was introduced by Lusternik-Schnirelmann \cite{LS34}. For a topological space $X$, $\cat(X)$ denotes the smallest positive integer $\mu$ such that $X$ can be covered by $\mu$ open subsets $V_0, V_1, \dots, V_{\mu}$ such that $V_i \hookrightarrow X$ is null-homotopic for $i=0, \dots, \mu.$ This invariant also provides some bounds for topological complexities. We recall from \cite[Corollary 3.3]{BGRT14} that for any path-connected space $X$, and natural number $r(\geq 2)$, 
    \begin{equation}\label{Eq_cat bounds TC}
        \cat(X^{r-1}) \leq \TC_r (X) \leq \cat (X^{r}).
    \end{equation}

Let us first fix some notation. Let $\mathbb K$ denote a field. Unless otherwise stated, $\mathbb K=\mathbb Q$ in the complex and quaternionic settings, while $\mathbb K=\mathbb Z_2$ in the real setting.

Now we want to focus on finding the lower bounds of these numerical homotopy invariants from cohomology algebra and dimension argument.
The cup-length of a topological space $X$, denoted by ${\cp} (X)$ is the maximal number $k$ such that there exists $x_i \in H^*(X;\mathbb K)$ for $i=1, \dots, k$ satisfying $\prod_{i=1}^k x_i \neq 0$, where each $\deg (x_i)>0$. Now we recall the upper and lower bounds of $\cat (X)$ for a path-connected space $X$ from \cite{CLOT03}
    \begin{equation}\label{Eq_upper and lower bound for cat}
        {\cp} (X) \leq \cat (X) \leq \frac{\hdim(X)}{\conn (X)+1},
    \end{equation}
where $\conn(X)$ is the largest degree up to which homotopy groups of $X$ are trivial and $\hdim(X)$ is the homotopy dimension of the topological space $X$.
Since these invariants are homotopy invariants, we are interested in homotopy dimensions which refer to the smallest dimensions of CW complexes of homotopy type $X$.

The fibration in \eqref{Eq_fibration Pi r} can be substituted by a iterated diagonal map $\Delta_r \colon X \to X^r$. 
Computing topological complexities is a challenge in general. In most cases, rather than determining the exact value, one focuses on finding bounds using various tools available in the literature. The upper bound is usually obtained from homotopy dimension arguments, while the lower bound comes from zero-divisor-cup-length, denoted by $\zcl_r(X)$. 
Let 
    $$\smile \colon \bigotimes_{i=1}^r H^*(X;\mathbb K) \to H^*(X;\mathbb K)$$ 
be the cup product map.  Then $\zcl_r (X)$ is defined to be the maximal number $\kappa$ such that there exist elements $u_i \in \ker \Delta_r^*$ for $i=1, \dots, \kappa$ satisfying $u_1 \smile u_2\smile \dots  \smile u_{\kappa} \neq 0$ in $H^*(X^r; \mathbb K)$, where $\Delta^*_r \colon H^*(X^r;\mathbb K) \to H^*(X;\mathbb K)$ is induced in cohomology by the diagonal map $\Delta_r$. The elements $u_i \in \ker \Delta_r^*$ are called the zero divisors.
The bounds
    \begin{equation}\label{Eq_upper and lower bound of TC by Schwarz}
        \zcl_r(X) \leq \TC_r(X) \leq \cat (X^r) \leq \frac{r \cdot \hdim(X)}{\conn(X)+1}
    \end{equation} 
of ${\TC}_r$ is due to Schwarz \cite[Theorems 4 and 5]{Sva66} where $\conn(X)$ is the connectivity of $X$ and $\hdim(X)$ is the homotopy dimension of the topological space $X$.
We recall the following from Basabe-Gonz\'alez-Rudyak-Tamaki\cite{{BGRT14}} for future use.
\begin{proposition}\cite[Corollary 3.15]{BGRT14}\label{Prop_tc of symplectic mfld}
    For any closed simply connected symplectic manifold $M$ of real dimension $2d$, we have $\TC_r(M)=rd$.
\end{proposition}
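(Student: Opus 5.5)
We prove the two inequalities $\TC_r(M)\le rd$ and $\TC_r(M)\ge rd$ separately. The upper bound comes from the dimension/connectivity estimate in \eqref{Eq_upper and lower bound of TC by Schwarz}. The lower bound comes from an explicit zero-divisor product built from the symplectic class, with coefficients in $\mathbb K=\mathbb R$. This choice is allowed because the Schwarz bound $\zcl_r\le \TC_r$ holds for any coefficient field.

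\emph{Upper bound.} Since $M$ is a closed smooth manifold of real dimension $2d$, it has the homotopy type of a finite CW complex of dimension $2d$. Hence $\hdim(M)\le 2d$. Since $M$ is simply connected, $\conn(M)\ge 1$. Plugging these into \eqref{Eq_upper and lower bound of TC by Schwarz} gives
\[
\TC_r(M)\le \cat(M^r)\le \frac{r\cdot 2d}{2}=rd .
\]
The case of $\hdim(M)$ strictly smaller than $2d$ only improves the estimate, although it cannot actually occur given the lower bound below.

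\emph{Lower bound.} Let $\omega\in H^2(M;\mathbb R)$ be the de Rham class of the symplectic form. Since $\omega^d$ is a volume form, $\omega^d\ne 0$, while $\omega^{d+1}=0$ for degree reasons. Write $\omega_i\in H^2(M^r;\mathbb R)$ for the pullback of $\omega$ along the $i$-th projection. The classes $\omega_i-\omega_1$, for $2\le i\le r$, lie in $\ker\Delta_r^*$, so they are zero divisors. Consider the product of $rd$ zero divisors
\[
P=(\omega_2-\omega_1)^{2d}\prod_{i=3}^{r}(\omega_i-\omega_1)^{d}.
\]
By the Künneth theorem, $H^*(M^r;\mathbb R)\cong H^*(M;\mathbb R)^{\otimes r}$, and any monomial containing some $\omega_j^{e}$ with $e>d$ vanishes. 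Since $P$ has total degree $2rd$, every surviving monomial must be $\omega_1^d\cdots\omega_r^d$. We now compute its coefficient. For $i\ge 3$, the variable $\omega_i$ occurs only in the factor $(\omega_i-\omega_1)^d$, so each of these factors must contribute $\omega_i^d$, with coefficient $1$. The remaining factor $(\omega_2-\omega_1)^{2d}$ must then contribute $\omega_1^d\omega_2^d$, with coefficient $(-1)^d\binom{2d}{d}$. Thus
\[
P=(-1)^d\tbinom{2d}{d}\,\omega^d\otimes\cdots\otimes\omega^d .
\]
This is nonzero in characteristic $0$. Hence $\zcl_r(M)\ge rd$, and so $\TC_r(M)\ge rd$.

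The only delicate point is the choice of coefficients. Over $\mathbb Z_2$, for instance, the binomial coefficient could vanish, so we must work over $\mathbb R$ or $\mathbb Q$. If one insists on $\mathbb Q$-coefficients, note that $\omega$ can be perturbed to a rational class with nonzero top power, since nondegeneracy is an open condition. With either coefficient field, the two bounds combine to give $\TC_r(M)=rd$.
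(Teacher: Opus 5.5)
Your proof is correct. The upper bound $\TC_r(M)\le \cat(M^r)\le r\cdot 2d/2=rd$ comes from the dimension--connectivity estimate. The lower bound comes from the product $(\omega_2-\omega_1)^{2d}\prod_{i\ge 3}(\omega_i-\omega_1)^{d}=(-1)^d\binom{2d}{d}\,\omega^d\otimes\cdots\otimes\omega^d\neq 0$, which gives $\zcl_r(M;\mathbb{R})\ge rd$.

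The paper does not prove this statement itself; it quotes \cite[Corollary 3.15]{BGRT14}. As far as I recall, that result rests on the same two ingredients you use: a dimension--connectivity upper bound and a nonzero product of zero divisors built from the symplectic class, the higher analogue of Farber's argument for $r=2$.
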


We begin with the following proposition where we calculate the exact value of LS-category and primary bounds of topological complexity of generalized Milnor manifolds.

\begin{proposition}\label{Prop_cat of GMM}
    For any $r \in \mathbb{N}$, we have 
        $$\cat \big( \mathbb{F}H(\mathbf{n})^r \big)=rD$$
    where $D$ is the $\mathbb{F}$-dimension of $\mathbb{F}H(\mathbf{n})$. As a consequence, for $r\geq 2$, we have 
        $$(r-1)D \leq {\TC}_r\big( \mathbb{F}H(\mathbf{n}) \big) \leq rD.$$
\end{proposition}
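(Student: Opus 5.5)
The plan is to squeeze $\cat\big(\mathbb{F}H(\mathbf n)^r\big)$ between the cup-length lower bound and the dimension/connectivity upper bound of \eqref{Eq_upper and lower bound for cat}, and check that both equal $rD$. Throughout, write $X=\mathbb{F}H(\mathbf n)$ and $d=\dim_{\mathbb R}\mathbb F$. Then $X$ is a closed manifold of real dimension $dD$ by \eqref{Eq_dimension of GMM}.

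\emph{Upper bound.} Since $X^r$ is a closed manifold of dimension $rdD$, we have $\hdim(X^r)\le rdD$. Next we show $\conn(X)\ge d-1$.
\begin{itemize}
\item For $\mathbb F=\mathbb R$ this is vacuous.
\item For $\mathbb F=\mathbb C,\mathbb H$ we argue by induction along the tower of Construction~\ref{construction of GMM fibre bundle}. Each $p_k\colon B_{k+1}\to B_k$ is a bundle with fibre $\mathbb{F}P^{\,n_{k+1}-k}$, which is $(d-1)$-connected, as is $B_1=\mathbb FP^{n_1}$. The homotopy long exact sequence (as in Remark~\ref{remarks}(iii)) then shows that $B_{k+1}$ is $(d-1)$-connected.
\end{itemize}
Connectivity of a product is the minimum over the factors, so $\conn(X^r)\ge d-1$. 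Hence $\cat(X^r)\le rdD/d=rD$.

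\emph{Lower bound.} We show $\cp(X)\ge D$ with coefficients $\mathbb K$.
\begin{itemize}
\item By Theorem~\ref{cohom of GMM}, $H^*(X;R)$ is generated as an algebra by the degree-$d$ classes $x_1,\dots,x_\ell$. It is $R$-free by Leray--Hirsch, so this remains true after tensoring with $\mathbb K$.
\item $X$ is a closed manifold; for $\mathbb F=\mathbb C,\mathbb H$ it is simply connected, hence orientable. Therefore $H^{dD}(X;\mathbb K)\cong\mathbb K\neq 0$.
\item This top group is spanned by monomials of total degree $D$ in the $x_i$. So some monomial $x_{i_1}\cdots x_{i_D}$ is nonzero, giving a nonzero product of $D$ positive-degree classes.
\end{itemize}
By the Künneth theorem over the field $\mathbb K$, the cross product of $r$ copies of this top class is the nonzero top class of $X^r$. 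It is a product of $rD$ positive-degree classes pulled back from the factors, so $\cp(X^r)\ge rD$. Combining with the upper bound gives $\cat(X^r)=rD$.

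\emph{Consequence for $\TC_r$.} Apply \eqref{Eq_cat bounds TC} with $\cat(X^{r-1})=(r-1)D$ and $\cat(X^r)=rD$; this yields $(r-1)D\le\TC_r(X)\le rD$ for $r\ge2$.

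The only step needing real care is the lower bound, specifically that the top-degree class is a product of degree-$d$ generators. This follows from generation in degree $d$ together with Poincaré duality over $\mathbb K$ (orientability matters for $\mathbb Q$, while $\mathbb Z_2$ needs none). The connectivity estimate is routine.
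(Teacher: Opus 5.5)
Your proof is correct and follows the paper's argument. A nonzero product of $D$ degree-$d$ generators, crossed over the $r$ factors by Künneth, gives $\cp \ge rD$; the dimension/connectivity bound gives $\cat \le rD$; and \eqref{Eq_cat bounds TC} then gives the $\TC_r$ bounds. The differences are minor: the paper reads off the top monomial $x_1^{n_1}x_2^{n_2-1}\cdots x_\ell^{n_\ell-\ell+1}$ directly from the Leray--Hirsch basis, whereas you get a nonzero length-$D$ monomial from nonvanishing of top cohomology plus generation in degree $d$, and you also spell out the $(d-1)$-connectivity (notably $3$-connectivity for $\mathbb{H}$) that the paper's ``dimension argument'' uses without proof.
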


\begin{proof}
    We recall the cohomology algebra of $\mathbb{F}H(\mathbf{n})$ from Theorem \ref{cohom of GMM}. Using Leray-Hirsch theorem, we observe that $(x_{1})^{n_1} (x_{2})^{n_2-1} \cdots (x_{\ell})^{n_{\ell}-\ell+1}$ is a generator of $H^D \big( \mathbb{F}H(\mathbf{n});\mathbb K \big)$ where $x_{i}$ denotes the appropriate characteristic class as discussed in Theorem \ref{cohom of GMM}. Thus the product is non-zero in $H^* \big( \mathbb{F}H(\mathbf{n});\mathbb K\big)$.
    
    Now we consider the cohomology class
        \begin{equation}\label{Eq_cup length GMM}
        \prod_{j=1}^r (x_{1_j})^{n_1} (x_{2_j})^{n_2-1} \cdots (x_{\ell_j})^{n_{\ell}-\ell+1} \in H^*\big( \mathbb{F}H(\mathbf{n})^r ;\mathbb K\big)
        \end{equation}
    where $x_{i_j}$ denotes the appropriate characteristic class arising from $j$-th factor in the product $H^*\big( \mathbb{F}H(\mathbf{n})^r ;\mathbb K\big)$.
    Being the product of non-zero cohomology classes arising from different factors, the expression in \eqref{Eq_cup length GMM} is non-zero in $H^*\big( \mathbb{F}H(\mathbf{n})^r ;\mathbb K\big)$, by K\"unneth formula. Thus by \eqref{Eq_upper and lower bound of TC by Schwarz}, 
        $$\cat \big( \mathbb{F}H(\mathbf{n})^r\big) \geq rD.$$
    The other inequality comes from the dimension argument in \eqref{Eq_upper and lower bound for cat}.
    The consequence on topological complexity follows from \eqref{Eq_cat bounds TC}.
\end{proof}

The exact values of higher topological complexities of complex generalized Milnor manifolds can be determined using the fact that they are compact K\"ahler manifolds. In this context, we have the following theorem.

\begin{theorem}
\label{Prop_higher TC of complex GMM}
The higher topological complexity of the generalized Milnor manifold is given by 
    $$
    \TC_r(\mathbb{F}H(\mathbf{n}))= rD \quad \text{for } \mathbb{F}=\mathbb{C}, ~\mathbb{H}
    $$
where $D$ is the $\mathbb{F}$-dimension of $\mathbb{F}H(\mathbf{n})$.
\end{theorem}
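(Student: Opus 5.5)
The upper bound $\TC_r(\mathbb{F}H(\mathbf{n}))\le rD$ comes directly from Proposition~\ref{Prop_cat of GMM}. Indeed, $\TC_r(X)\le \cat(X^r)=rD$ by \eqref{Eq_cat bounds TC}, where $D$ is the $\mathbb F$-dimension. So only the lower bound $\TC_r\ge rD$ needs proof, and the complex and quaternionic cases are handled differently.

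For $\mathbb{F}=\mathbb{C}$ the argument is immediate. By Proposition~\ref{Prop_Kahler}, $\mathbb{C}H(\mathbf n)$ is a compact K\"ahler, hence closed symplectic, manifold of real dimension $2D$. By Remark~\ref{remarks}(iii), it is simply connected. Proposition~\ref{Prop_tc of symplectic mfld} then gives $\TC_r(\mathbb{C}H(\mathbf n))=rD$ directly.

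For $\mathbb{F}=\mathbb{H}$ no K\"ahler or symplectic structure is available, so I would instead prove $\zcl_r(\mathbb{H}H(\mathbf n))\ge rD$ with $\mathbb K=\mathbb Q$ coefficients. The input is Theorem~\ref{cohom of GMM}: the rational cohomology ring is $\mathbb Q[x_1,\dots,x_\ell]/\langle h_{n_1+1}(x_1),\dots,h_{n_\ell-\ell+2}(x_1,\dots,x_\ell)\rangle$ with $\deg x_i=4$. This is exactly the complex presentation with degrees doubled. Since everything is concentrated in even degrees there are no signs, and doubling degrees is a graded ring isomorphism onto $H^*(\mathbb{C}H(\mathbf n);\mathbb Q)$ up to regrading.

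Zero-divisor cup-length depends only on the ring structure: $\ker\Delta_r^*$ and the nonvanishing of products in $H^*(X)^{\otimes r}$ are preserved under such an isomorphism. Hence $\zcl_r(\mathbb{H}H(\mathbf n))=\zcl_r(\mathbb{C}H(\mathbf n))$ over $\mathbb Q$. I would then compute the complex value as $rD$. The reason is that the proof in \cite{BGRT14} of Proposition~\ref{Prop_tc of symplectic mfld} really shows that a class $\omega$ with $\omega^{D}\ne0$ in degree $2$ yields $\zcl_r\ge rD$, via the zero divisors $\omega_j-\omega_1$. I expect this to be the main obstacle, since it requires extracting the zcl statement from \cite{BGRT14} rather than just the $\TC$ conclusion.

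To make the quaternionic case self-contained, I would instead verify the nondegeneracy directly in the quaternionic ring. Take a K\"ahler-type class $\omega=\sum_i a_ix_i$ with $\omega^D\neq0$; a generic rational combination suffices, because the complex GMM has a K\"ahler class whose top power is nonzero, and this is transferred through the ring isomorphism. Then consider the zero divisors $\bar\omega_j=1\otimes\cdots\otimes\omega\otimes\cdots\otimes1-\omega\otimes1\otimes\cdots\otimes1$, with $\omega$ in the $j$-th slot, for $j=2,\dots,r$. The key step is the standard binomial expansion argument: $\prod_{j=2}^r\bar\omega_j^{2D}$ contributes $rD$ factors of $\omega$ total, and its component in $H^{D}\otimes\cdots\otimes H^D$ is a nonzero multiple of $\omega^D\otimes\cdots\otimes\omega^D$, so the product is nonzero. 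This gives $(r-1)\cdot 2D\ge rD$ zero divisors with nonzero product. Combining with \eqref{Eq_upper and lower bound of TC by Schwarz} yields $\TC_r\ge rD$, and together with the upper bound this gives $\TC_r(\mathbb{H}H(\mathbf n))=rD$.
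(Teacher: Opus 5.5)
The gap is in the step you call the key step. For $r\ge 3$ the product $\prod_{j=2}^r\bar\omega_j^{2D}$ is zero. It has $2(r-1)D$ factors, not $rD$ as you state. Each factor has positive degree $\deg\omega$, so the product sits in degree $2(r-1)D\deg\omega$. Once $r\ge 3$, this exceeds the top degree $rD\deg\omega$ of $H^*(\mathbb{H}H(\mathbf n)^r;\mathbb Q)$. It therefore has no nonzero component in $H^{D\deg\omega}\otimes\cdots\otimes H^{D\deg\omega}$. Your conclusion of ``$(r-1)\cdot 2D\ge rD$ zero divisors with nonzero product'' cannot hold: a nonzero product of such zero-divisors has at most $rD$ factors. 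For $r=2$ your product is correct.

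The repair is to use exactly $rD$ factors. Write $\omega_j$ for $\omega$ placed in the $j$-th tensor slot, and take
\[
\bar\omega_2^{2D}\,\bar\omega_3^{D}\cdots\bar\omega_r^{D}.
\]
In the expansion of $(\omega_2-\omega_1)^{2D}$, every term other than $(-1)^D\binom{2D}{D}\omega_1^D\omega_2^D$ contains $\omega_1^k$ or $\omega_2^k$ with $k>D$, so it vanishes. Because $\omega_1^D$ is already present, only the summand $\omega_j^D$ of each $(\omega_j-\omega_1)^D$, $j\ge 3$, survives. The product is therefore $(-1)^D\binom{2D}{D}\,\omega^D\otimes\cdots\otimes\omega^D$, which is nonzero over $\mathbb Q$ by K\"unneth. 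Hence $\zcl_r\ge rD$.

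With this correction your argument is complete and follows the paper's route. For $\mathbb C$ it uses K\"ahlerness, simple connectivity and \cite[Corollary 3.15]{BGRT14}. For $\mathbb H$ it transfers through the degree-doubling isomorphism of rational cohomology rings. Your version is actually more explicit than the paper's. The paper only asserts that the ``cup-lengths'' agree, and leaves implicit both that the symplectic lower bound is a $\zcl_r$ bound and that the upper bound comes from Proposition~\ref{Prop_cat of GMM}.

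Your choice of a rational $\omega$ is fine. The coefficient of $\omega^D$ is a polynomial in the $a_i$ that is nonzero at the real K\"ahler class, hence nonzero at generic rational points. In fact such an $\omega$ exists without any K\"ahler input. Over $\mathbb Q$, every degree-$D$ monomial in the $x_i$ is a combination of $D$-th powers of linear forms, and the top class $x_1^{n_1}x_2^{n_2-1}\cdots x_\ell^{n_\ell-\ell+1}$ is nonzero. So the corrected computation handles $\mathbb C$ and $\mathbb H$ uniformly.
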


\begin{proof}
In Subsection \ref{Subsec_GMM is Kahler}, we deduced that the generalized complex Milnor manifold $\mathbb{C}H(\mathbf{n})$ is a K\"{a}hler Manifold and consequently a symplectic manifold. Then using Proposition \ref{Prop_tc of symplectic mfld} and Remark \ref{remarks} (iii), we conclude $\TC_r(\mathbb{C}H(\mathbf{n}))=rD$.

Using Theorem \ref{cohom of GMM}, one observes that the presentations of the cohomology rings of $\mathbb CH(\mathbf n)$ and $\mathbb HH(\mathbf n)$ are isomorphic. The isomorphism is given by
\[
\phi : H^*(\mathbb CH(\mathbf n);\mathbb Q)\to H^*(\mathbb HH(\mathbf n); \mathbb Q),\quad c_1(\xi_i)\mapsto p_1({\xi_i}_{\mathbb R})\text{ for each }i,
\] where $c_1(\xi_i)$ denotes the first Chern class of the complex vector bundle $\xi_i$ and $p_1({\xi_i}_{\mathbb R})$ denote the first Pontryagin class of the underlying real vector bundle ${\xi_i}_{\mathbb R}$ of the canonical quaternionic vector bundle $\xi_{i}$ over $\mathbb HH(\mathbf n).$
Note that the isomorphism $\phi$ is not degree-preserving; rather, it doubles the degree. Since the cup-length depends only on the multiplicative structure of the ring and not on its grading, it follows that the cup-lengths of $\mathbb CH(\mathbf n)$ and $\mathbb HH(\mathbf n)$ are equal.
\end{proof}

\begin{remark}
    Theorem~\ref{Prop_higher TC of complex GMM} generalizes Theorem~2.6 of \cite{DS25} and, as a special case, provides an alternative proof of it. \qed
\end{remark}

We have already obtained a lower bound for the topological complexities of generalized real Milnor manifolds in Proposition~\ref{Prop_cat of GMM}. We now investigate whether this bound can be improved. In this direction, and more generally for fibre bundles satisfying the Leray--Hirsch hypotheses, Proposition \ref{Prop_zcl in Leray Hirsch setup} provides a useful tool for obtaining lower bounds of zero-divisor-cup-length in terms of fibre and base space, (cf. \cite[page~25]{HK00}). To prove that we need the following lemma.

\begin{lemma}\label{ker onto ker}
Let  $F\xhookrightarrow{i} E \overset{p}{\twoheadrightarrow}B$ be a fiber bundle such that $i^*: H^*(E; \mathbb{K}) \to H^*(F; \mathbb{K}) $ is surjective. Let $r \ge 2$ be an integer, and let $\mathbb{K}$ be a field. Denote by $X^r$ the $r$-fold cartesian product $X\times\cdots \times X$, and let $\Delta_{r,X}: X \to X^r$ be the diagonal map. Then the restriction
\[
(i^{\times r})^*: \ker(\Delta_{r,E}^*) \to \ker(\Delta_{r,F}^*)\quad\text{is surjective.}
\]
\end{lemma}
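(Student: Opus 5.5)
The plan is to exploit the fact that $\ker(\Delta_{r,X}^*)$ has an explicit spanning set in terms of differences, and to lift these generators from $F$ to $E$ using a linear section of $i^*$. By the K\"unneth theorem over the field $\mathbb K$, we identify $H^*(X^r;\mathbb K)\cong H^*(X;\mathbb K)^{\otimes r}$. Under this identification $\Delta_{r,X}^*$ is the iterated cup product $a_1\otimes\cdots\otimes a_r\mapsto a_1\smile\cdots\smile a_r$, and $(i^{\times r})^*=(i^*)^{\otimes r}$. Naturality of the diagonal, $\Delta_{r,E}\circ i = i^{\times r}\circ\Delta_{r,F}$, gives $\Delta_{r,F}^*\circ (i^{\times r})^* = i^*\circ\Delta_{r,E}^*$. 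So $(i^{\times r})^*$ does send $\ker\Delta_{r,E}^*$ into $\ker\Delta_{r,F}^*$, and only surjectivity needs proof.

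First I would describe $\ker\Delta_{r,F}^*$ concretely. I claim it is spanned, as a $\mathbb K$-vector space, by elements of the form
\[
a_1\otimes\cdots\otimes a_r - 1\otimes\cdots\otimes 1\otimes (a_1\smile\cdots\smile a_r),
\]
with $a_j\in H^*(F;\mathbb K)$ homogeneous. Indeed, let $z=\sum_k a^{(k)}_1\otimes\cdots\otimes a^{(k)}_r$ lie in the kernel. Then
\[
z=\sum_k\Big(a^{(k)}_1\otimes\cdots\otimes a^{(k)}_r-1\otimes\cdots\otimes 1\otimes a^{(k)}_1\smile\cdots\smile a^{(k)}_r\Big)+1\otimes\cdots\otimes1\otimes\Delta_{r,F}^*(z),
\]
and the last term vanishes because $\Delta_{r,F}^*(z)=0$. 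This is just the statement that the kernel of a surjective multiplication map is spanned by such ``difference'' elements.

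Next, $i^*$ is surjective and we are over a field, so choose for each homogeneous basis element of $H^*(F)$ a preimage in $H^*(E)$. This gives a linear section $s\colon H^*(F)\to H^*(E)$ with $i^*\circ s=\mathrm{id}$. Note that $s$ need not be multiplicative, so the naive lift of a difference element may fail to lie in the kernel. To fix this, for each spanning element I would take the lift
\[
u=s(a_1)\otimes\cdots\otimes s(a_r)-1\otimes\cdots\otimes1\otimes\big(s(a_1)\smile\cdots\smile s(a_r)\big)\in H^*(E)^{\otimes r}.
\]
By construction $\Delta_{r,E}^*(u)=0$. Since $i^*$ is a ring map, $(i^*)^{\otimes r}(u)=a_1\otimes\cdots\otimes a_r-1\otimes\cdots\otimes1\otimes(a_1\smile\cdots\smile a_r)$. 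The map is linear and hits a spanning set of $\ker\Delta_{r,F}^*$, so surjectivity follows.

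The main obstacle is the non-multiplicativity of the section $s$. The key step is therefore to lift the product $s(a_1)\smile\cdots\smile s(a_r)$ rather than to apply $s$ to $a_1\smile\cdots\smile a_r$; once this is done, everything else is K\"unneth bookkeeping. Finiteness assumptions are only needed to apply K\"unneth cleanly, and for the lemma itself field coefficients suffice.
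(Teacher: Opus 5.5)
Your proof is correct. It reaches the conclusion by a more hands-on route than the paper, although the underlying mechanism is the same.

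\textbf{The paper's route.} The paper never describes $\ker\Delta_{r,F}^*$ explicitly. It places the short exact sequences $0\to\ker\Delta_{r,X}^*\to H^*(X^r;\mathbb K)\to H^*(X;\mathbb K)\to 0$, for $X=E,F$, into a commutative diagram and notes that the middle map $(i^*)^{\otimes r}$ is surjective. By the Snake Lemma, $\operatorname{coker}(\psi)$ is then the image of the connecting map $\delta\colon\ker(i^*)\to\operatorname{coker}(\psi)$. Finally it shows $\delta=0$: every $\alpha\in\ker(i^*)$ equals $\Delta_{r,E}^*(\alpha\otimes1\otimes\cdots\otimes1)$, and $\alpha\otimes1\otimes\cdots\otimes1$ lies in $\ker((i^{\times r})^*)$.

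\textbf{Your route.} You replace this diagram chase by an explicit spanning set of \emph{difference} elements together with explicit lifts of each one.

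\textbf{Why they agree.} If you unwind the connecting homomorphism, the paper's preimage of $z\in\ker\Delta_{r,F}^*$ is $w-\Delta_{r,E}^*(w)\otimes1\otimes\cdots\otimes1$, for any $w$ with $(i^{\times r})^*w=z$. This is exactly your correction term, with the product placed in the first tensor slot instead of the last. In both cases the argument uses that $w\mapsto w-\Delta_{r,X}^*(w)\otimes1\otimes\cdots\otimes1$ (or its last-slot variant) is a projection of $H^*(X^r;\mathbb K)$ onto $\ker\Delta_{r,X}^*$ that commutes with $(i^{\times r})^*$.

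\textbf{What each buys.} The paper's version is shorter and purely formal. Yours is constructive and makes it clear why no multiplicative section is needed. In fact no global section $s$ is needed at all: any preimages $\tilde a_j$ of the $a_j$ will do. Your version also produces exactly the kind of explicit extensions $\tilde f_j\in\ker\Delta_{r,E}^*$ that are used in the proof of Proposition~\ref{Prop_zcl in Leray Hirsch setup}.

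\textbf{A shared caveat.} Both arguments use K\"unneth to write classes of $H^*(F^r;\mathbb K)$ as sums of cross products. So both tacitly need $H^*(F;\mathbb K)$ to be of finite type, as it is in that proposition. Your closing remark that field coefficients alone suffice should not be read as removing this hypothesis.
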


\begin{proof}
Consider the commutative diagram of short exact sequences of $\mathbb{K}$-vector spaces:
\begin{equation}\label{diag: ker onto ker}
    \begin{tikzcd}
	0 & {\ker \Delta_{r,E}^*} & {H^*(E^r;\mathbb K)} & {H^*(E;\mathbb K)} & 0 \\
	0 & {\ker \Delta_{r,F}^*} & {H^*(F^r;\mathbb K)} & {H^*(F;\mathbb K)} & 0
	\arrow[from=1-1, to=1-2]
	\arrow[hook, from=1-2, to=1-3]
	\arrow["\psi"', from=1-2, to=2-2]
	\arrow["{\Delta_{r,E}^*}", from=1-3, to=1-4]
	\arrow["{(i^{\times r})^*}", from=1-3, to=2-3]
	\arrow[from=1-4, to=1-5]
	\arrow["{i^*}", from=1-4, to=2-4]
	\arrow[from=2-1, to=2-2]
	\arrow[hook, from=2-2, to=2-3]
	\arrow["{\Delta_{r,F}^*}", from=2-3, to=2-4]
	\arrow[from=2-4, to=2-5]
\end{tikzcd}
\end{equation}
where $\psi := (i^{\times r})^*|_{\ker(\Delta_{r,E}^*)}$ is the restricted map. 

Using the K\"unneth formula, we have 
\[
H^*(E^r; \mathbb{K}) \cong \bigotimes_{i=1}^r H^*(E; \mathbb{K}) \quad \text{and} \quad H^*(F^r; \mathbb{K}) \cong \bigotimes_{i=1}^r H^*(F; \mathbb{K}).
\]
Under these identifications, the map $(i^{\times r})^*$ corresponds precisely to $\bigotimes_{i=1}^r i^*$. Indeed, the tensor product of any finite number of surjective linear maps over a field remains surjective, the middle vertical map $(i^{\times r})^*$ is surjective.

Using Snake Lemma in the diagram \eqref{diag: ker onto ker} yields the exact sequence of kernels and cokernels:
\[
\ker((i^{\times r})^*) \xrightarrow{\varphi} \ker(i^*) \xrightarrow{\delta} \mathrm{coker}(\psi) \to \operatorname{coker}((i^{\times r})^*)
\]
where $\varphi$ is the restriction of the map $\Delta_{r,E}^*$ to $\ker((i^{\times r})^*)$, and $\delta$ is the connecting homomorphism. 

Since the map $(i^{\times r})^*$ is surjective, $\operatorname{coker}((i^{\times r})^*) = 0$. By exactness at $\operatorname{coker}(\psi)$, the sequence terminates as:
\[
\operatorname{ker}((i^{\times r})^*) \xrightarrow{\varphi} \operatorname{ker}(i^*) \xrightarrow{\delta} \operatorname{coker}(\psi) \to 0,
\]
which implies that $\delta$ is surjective. Thus, to show $\operatorname{coker}(\psi) = 0$, it suffices to prove that $\varphi$ is surjective.
Let $\alpha \in \ker(i^*) \subseteq H^*(E; \mathbb{K})$.  Consider the element 
\[
\tilde{\alpha} = \alpha \otimes 1 \otimes \dots \otimes 1 \in H^*(E^r; \mathbb{K}).
\]
This implies that $\Delta_{r,E}^*(\tilde{\alpha}) = \alpha \cdot 1 \cdots 1 = \alpha$, and consequently
\[
(i^{\times r})^*(\tilde{\alpha}) = i^*(\alpha) \otimes i^*(1) \otimes \dots \otimes i^*(1) = 0 \otimes 1 \otimes \dots \otimes 1 = 0,
\]
which proves that $\tilde{\alpha} \in \ker((i^{\times r})^*)$. Since $\varphi(\tilde{\alpha}) = \Delta_{r,E}^*(\tilde{\alpha}) = \alpha$, the map $\varphi$ is surjective.
\end{proof}

\begin{proposition}\label{Prop_zcl in Leray Hirsch setup}
    Let $p:E\to B$ be a fibre bundle with fibre $F$ such that $H^*(F;\mathbb K)$ is finite-dimensional and the inclusion of each fibre into $E$ induces a surjection in cohomology with coefficients in a field $\mathbb K$. Then 
    $$\zcl_r(F) + \zcl_r(B) \leq \zcl_r(E).$$
\end{proposition}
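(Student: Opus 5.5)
Set $k=\zcl_r(F)$ and $m=\zcl_r(B)$. Choose zero-divisors $v_1,\dots,v_k\in\ker\Delta_{r,F}^*$ with $v_1\cdots v_k\neq 0$ in $H^*(F^r;\mathbb K)$, and zero-divisors $w_1,\dots,w_m\in\ker\Delta_{r,B}^*$ with $w_1\cdots w_m\neq0$ in $H^*(B^r;\mathbb K)$. The plan is to produce $k+m$ zero-divisors on $E^r$ whose product is nonzero, as follows.

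\textbf{Step 1 (lifting fibre zero-divisors).} By Lemma~\ref{ker onto ker}, each $v_j$ lifts to some $\tilde v_j\in\ker\Delta_{r,E}^*$ with $(i^{\times r})^*\tilde v_j=v_j$.

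\textbf{Step 2 (pulling back base zero-divisors).} Set $\tilde w_j:=(p^{\times r})^*w_j$. Naturality, $\Delta_{r,E}^*(p^{\times r})^*=p^*\Delta_{r,B}^*$, shows that each $\tilde w_j$ is a zero-divisor for $E$.

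\textbf{Step 3 (nonvanishing of $\tilde w_1\cdots\tilde w_m\cdot\tilde v_1\cdots\tilde v_k$).} This is the main step. The product $p^{\times r}:E^r\to B^r$ is a fibre bundle with fibre $F^r$. Since $H^*(F;\mathbb K)$ is finite-dimensional, the Künneth formula identifies $H^*(F^r)$ with $H^*(F)^{\otimes r}$, and tensoring cohomology extension classes shows that the restriction $(i^{\times r})^*$ is surjective (as in the proof of Lemma~\ref{ker onto ker}). Hence the Leray--Hirsch theorem \cite[Theorem 4D.1]{Hat} applies to $p^{\times r}$. Choose a homogeneous basis $\{f_\alpha\}$ of $H^*(F^r)$ with $f_{\alpha_0}=v_1\cdots v_k$, and lifts $\tilde f_\alpha$ with $\tilde f_{\alpha_0}=\tilde v_1\cdots\tilde v_k$. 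This is a legitimate choice, since the product of the lifts restricts to $v_1\cdots v_k$ and Leray--Hirsch allows any choice of lifts of a basis. Then $H^*(E^r)$ is a free $H^*(B^r)$-module on $\{\tilde f_\alpha\}$ via $b\cdot e=(p^{\times r})^*b\smile e$, so the map $b\otimes f_\alpha\mapsto (p^{\times r})^*b\smile\tilde f_\alpha$ is an isomorphism. The element $(w_1\cdots w_m)\otimes f_{\alpha_0}$ is nonzero, so its image $\tilde w_1\cdots\tilde w_m\cdot\tilde v_1\cdots\tilde v_k$ is nonzero. This gives $\zcl_r(E)\ge k+m$.

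The expected obstacle is making sure Leray--Hirsch genuinely holds for the product bundle $p^{\times r}$ with the chosen lifts. This requires finite-dimensionality of $H^*(F)$ (used in Künneth), together with the fact that Leray--Hirsch needs only some set of lifts of a basis whose restrictions form a basis of every fibre. Surjectivity at one fibre gives this at every fibre when $B$ is path-connected. If $B$ is not path-connected, the hypothesis ``each fibre'' covers it. The sign conventions in the cross product do not matter, since nonvanishing is all that is needed.
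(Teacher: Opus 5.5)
Your proposal is correct and follows essentially the same route as the paper. You lift the fibre zero-divisors via Lemma~\ref{ker onto ker}, pull back the base zero-divisors, choose a Leray--Hirsch basis for $F^r\hookrightarrow E^r\twoheadrightarrow B^r$ whose distinguished lift is $\tilde v_1\cdots\tilde v_k$, and read off that $(p^{\times r})^*(w_1\cdots w_m)\smile\tilde v_1\cdots\tilde v_k\neq 0$ from the Leray--Hirsch isomorphism. Your write-up is in fact slightly more careful than the paper's: you check in Step 2 that each factor $(p^{\times r})^*w_j$ is itself a zero-divisor, and you check that the lifts restrict to a basis in every fibre of $p^{\times r}$, whereas the paper only verifies that the whole product lies in $\ker\Delta_{r,E}^*$.
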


\begin{proof}
    Let $i:F\hookrightarrow E$ be a fibre inclusion and $i^*: H^*(E,\mathbb K)\to H^*(F;\mathbb K)$ is an epimorphism. By Leray--Hirsch theorem, the projection $p$ induces a monomorphism in cohomology.
    
    Using the fiber bundle $F\xhookrightarrow{i} E \overset{p}{\twoheadrightarrow}B,$  one can construct a new fibre bundle
    \begin{equation}\label{product fibre bundle}
        \begin{tikzcd}
	   {F^r} & {E^r} & {B^r}
	   \arrow["{i^{\times r}}", hook, from=1-1, to=1-2]
	   \arrow["{p^{\times r}}", two heads, from=1-2, to=1-3]
        \end{tikzcd}
    \end{equation}where $X^r$ denotes the $r$-fold cartesian product $X\times \cdots \times X$, and $i^{\times r},\; p^{\times r}$ denote the $r$-fold product maps $i\times \cdots \times i,\; p\times \cdots \times p$, respectively.
    Using K\"unneth formula we identify $H^*(X^r;\mathbb K)$ with $\otimes _{i=1}^{r}H^*(X;\mathbb K)$. Then we have the following commutative diagram
\begin{equation}\label{commutative diagram}
    \begin{tikzcd}
	{\bigotimes_{i=1}^r H^*(F;\mathbb K)} & {\bigotimes_{i=1}^r H^*(E,\mathbb K)} & {\bigotimes_{i=1}^r H^*(B;\mathbb K)} \\
	{H^*(F;\mathbb K)} & {H^*(E,\mathbb K)} & {H^*(B;\mathbb K)}
	\arrow["{\Delta_{r,F}^*}", from=1-1, to=2-1]
	\arrow["{\otimes i^*}"', two heads, from=1-2, to=1-1]
	\arrow["{\Delta_{r,E}^*}", from=1-2, to=2-2]
	\arrow["{\otimes p^*}"', hook', from=1-3, to=1-2]
	\arrow["{\Delta_{r,B}^*}", from=1-3, to=2-3]
	\arrow["{i^*}"', two heads, from=2-2, to=2-1]
	\arrow["{p^*}"', hook', from=2-3, to=2-2],
\end{tikzcd}
\end{equation}
where $\otimes i^*$ and $\otimes p^*$ are induced from $i^{\times r}$ and $p^{\times r}$, respectively.
The commutativity of the diagram follows from the fact that all the maps in the diagram are induced from topological maps and they commute appropriately.

Let $t:=\zcl_r(F;\mathbb K)$ and $s:=\zcl_r(B;\mathbb K).$ Let $P_F=f_1f_2\cdots f_t$ be a nontrivial product of $t$ positive-degree elements from $\ker(\Delta_{r,F}^*)$. Likewise, let $P_B=b_1b_2\cdots b_s$ be a nontrivial product of $s$ positive-degree elements from $\ker(\Delta_{r,B}^*)$.

Let $\{\alpha_1,\alpha_2,\ldots,\alpha_n\}$ be a $\mathbb K$-basis of $H^*(F^r;\mathbb K)$ with $\alpha_1=P_F$. Using Lemma~\ref{ker onto ker}, we can choose cohomology extensions $\tilde f_1,\tilde f_2,\ldots,\tilde f_t\in \ker \Delta_{r,E}^*$ under $\otimes i^*$ of $f_1,f_2,\ldots,f_t$, respectively. Choose cohomology extensions $\tilde\alpha_1,\tilde\alpha_2,\ldots,\tilde\alpha_n\in H^*(E^r;\mathbb K)$ of $\alpha_1,\alpha_2,\ldots,\alpha_n$ such that $i^*(\tilde\alpha_i)=\alpha_i$ for $1\le i\le n$ and $\tilde\alpha_1=\tilde f_1\tilde f_2\cdots\tilde f_t$.

Now using Leray--Hirsch theorem for the fibre bundle $\eqref{product fibre bundle}$, we know that $H^*(E^r;\mathbb K)$ is a free $H^*(B^r;\mathbb K)$-module and we have 
the $H^*(B^r;\mathbb K)$-module isomorphism given by
\[
\phi: H^*(B^r;\mathbb K)\otimes H^*(F^r;\mathbb K)  \longrightarrow H^*(E^r;\mathbb K), \quad b\otimes \sum_{i=1}^{n} a_i \alpha_i\;\xmapsto{\quad\phi\quad}\; p^*(b)\smile \sum _{i=1}^n a_i \tilde
\alpha_i.
\]
Under this isomorphism, $0\neq P_B\otimes P_F=P_B\otimes \alpha_1$ corresponds to $\otimes p^*(P_B)\smile\tilde \alpha_1\in H^*(E^r;\mathbb K)$, ensuring that $\otimes p^*(P_B)\smile\tilde \alpha_1$ is nonzero.

The fact that the product $\otimes p^*(P_B)\smile\tilde \alpha_1\in \ker \Delta _{r,E}^*$ is followed from the observation that 
\begin{align*}
    \Delta_{r,E}^*(\tilde \alpha_1\smile \otimes p^*(P_B))&=\Delta_{r, E}^*(\tilde \alpha_1 
    )\smile \Delta^*_{r,E}\big(\otimes p^*(P_B)\big)\\
    &=\Delta_{r, E}^*(\tilde \alpha_1)\smile \otimes p^*\big(\Delta^*_{r,B}(P_B)\big),\text{ using the commutativity of diagram~\eqref{commutative diagram}}\\
    &= 0, \text{ since }\Delta^*_{r,B}(P_B)=0.
\end{align*}

Since $\otimes p^*$ is a ring monomorphism, the class $\otimes p^*(P_B)$ is nonzero and factors as the product of $s=\zcl_r(B;\mathbb K)$ positive-degree elements, namely the images of $b_1,b_2,\ldots,b_s$ under $\otimes p^*$. Together with the fact $\tilde\alpha_1=\tilde f_1\tilde f_2\cdots\tilde f_t$, it follows that the nonzero class $\otimes p^*(P_B)\smile\tilde\alpha_1$ can be expressed as a product of at least $\zcl_r(B;\mathbb K)+\zcl_r(F;\mathbb K)$ many positive-degree cohomology classes. Therefore,
$\zcl_r(E;\mathbb K)\ge \zcl_r(B;\mathbb K)+\zcl_r(F;\mathbb K)$, completing the proof.
\end{proof}

There exist many articles exploring the zero-divisor-cup-length and consequently topological complexity of real projective space $\RR P^n$ and classical real Milnor manifold $\RR H(r,s)$, see \cite{Dav18, NJDL18,FTY03,ND25,DS25} for instance. We exploit these existing results to find better lower bounds than the bounds given in Proposition \ref{Prop_cat of GMM} for topological complexity of generalized real Milnor manifolds. Here, we recall the following result from Davis\cite{Dav18} for later use.

\begin{theorem}\cite[Theorem 1.6]{Dav18}\label{Thm_Davis}
    Let $n=2^t+s$, $t\geq 0$ and $0 \leq s < 2^t$. Then for $r\geq 2$, 
        \begin{equation}\label{Eq_Davis}
            \zcl_r(\RR P^n)=\min \{ \zcl_r(\RR P^d) + r 2^t, (r-1)(2^{t+1}-1) \}, \quad \text{where we set} ~\zcl_r(\RR P^0)=0.
        \end{equation}
\end{theorem}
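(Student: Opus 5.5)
The plan is to reduce $\zcl_r(\RR P^n)$ to a binary-digit optimization and then prove the two inequalities behind the minimum separately.

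\textbf{Step 1: algebraic reformulation.} Write $H^*((\RR P^n)^r;\mathbb Z_2)=\mathbb Z_2[x_1,\dots,x_r]/(x_1^{n+1},\dots,x_r^{n+1})$. The kernel of $\Delta_r^*$ is the ideal $I$ generated by $z_i:=x_1+x_i$ for $2\le i\le r$. Hence $\zcl_r(\RR P^n)$ is the largest $k$ with $I^k\neq 0$. Since $I^k$ is spanned by the products $\prod_{i\ge2} z_i^{e_i}\cdot m$ with $\sum e_i= k$, we get
\[
\zcl_r(\RR P^n)=\max\Big\{\textstyle\sum_{i=2}^r e_i \;:\; \prod_{i=2}^r (x_1+x_i)^{e_i}\neq 0\Big\}.
\]
Expanding by the binomial theorem, the monomials $x_1^{\sum(e_i-j_i)}\prod_{i\ge 2} x_i^{j_i}$ obtained from different choices of $(j_i)$ are distinct, so there is no cancellation. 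By Lucas' theorem, the product is therefore nonzero exactly when we can split each $e_i=j_i+m_i$ with $j_i,m_i$ having disjoint binary supports, $j_i\le n$, and $\sum_i m_i\le n$. So the whole problem becomes maximizing $\sum_i(j_i+m_i)$ under these constraints.

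\textbf{Step 2: upper bounds.} Consider a single factor $(x_1+x_i)^{e_i}$. It lives in the subring generated by $x_1,x_i$, and the classical computation for $\zcl_2(\RR P^n)$ forces $e_i\le 2^{t+1}-1$. This gives $\zcl_r\le (r-1)(2^{t+1}-1)$.

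For the other bound, I would split each $j_i$ and $m_i$ into its bit $2^t$ and its lower part below $2^t$. Each $j_i\le n<2^{t+1}$, so $j_i$ has at most the top bit $2^t$ together with a part below $2^t$. If a $j_i$ uses the top bit, its remaining part is at most $s$. Also, $\sum m_i\le n$ allows at most one $m_i$ to use the bit $2^t$, and then the remaining $m$'s sum to at most $s$. Counting the top bits contributes at most $r\cdot 2^t$ in total: at most one per index $i$ from the $j_i$, plus at most one from the $m$'s. What remains is a configuration of exactly the same shape with $n$ replaced by $s$, whose value is at most $\zcl_r(\RR P^s)$ by Step 1 applied to $s$. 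This yields $\zcl_r(\RR P^n)\le \zcl_r(\RR P^s)+r2^t$.

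\textbf{Step 3: lower bounds by explicit constructions.} First, take an optimal configuration $(j_i',m_i')$ for $\RR P^s$ and add $2^t$ to every $j_i'$ and to one $m_i'$. This is legal because all entries are $<2^t$ and $s+2^t=n$. It realizes $\zcl_r(\RR P^s)+r2^t$ whenever this is feasible.

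Second, realize $(r-1)(2^{t+1}-1)$ by choosing each $e_i=2^{t+1}-1$ and distributing its bits between $j_i\le n$ and $m_i$ so that the $m_i$ are pairwise compatible with $\sum m_i\le n$. One then checks that the feasible value equals the smaller of the two quantities.

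\textbf{Main obstacle.} I expect the hardest part to be the bookkeeping in the upper bound of Step 2: the $m_i$'s may have overlapping lower bits whose sum carries into the bit $2^t$, so the reduction to $\RR P^s$ must be justified carefully. A matching difficulty appears in the lower bound: when the minimum is $(r-1)(2^{t+1}-1)$, one must show it is actually attained. I would handle both by induction on $t$, treating the bit $2^t$ exactly as above and absorbing any carries into the $m$-budget.
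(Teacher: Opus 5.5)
The paper gives no proof of this statement; it is quoted from Davis. (The $\RR P^d$ in the display should read $\RR P^s$, as you assumed.) Judged on its own, your Step~1 reformulation and the bound $(r-1)(2^{t+1}-1)$ are correct. The bit-accounting in Steps~2 and~3, which is where the content of the theorem lies, is wrong as written.

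\textbf{Step 3.} You add $2^t$ to all $r-1$ of the $j_i'$ and to one $m_{i_0}'$. This puts the bit $2^t$ into both $j_{i_0}$ and $m_{i_0}$, so $\binom{e_{i_0}}{j_{i_0}}$ is even and the configuration is not admissible. For $n=2$, $r=3$ ($t=1$, $s=0$) your recipe produces $(x_1+x_2)^4(x_1+x_3)^2$, which is $0$ because $(x_1+x_2)^4=x_1^4+x_2^4=0$.

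The missing observation is that a factor can carry $2^t$ in at most one of $j_i,m_i$. So at most $r-1$ top bits ever occur, and the $r$-th copy of $2^t$ in $\zcl_r(\RR P^s)+r2^t$ has to be produced by \emph{lower-order} bits. In the example, $\zcl_3(\RR P^2)=6$ is realized by $j_2=j_3=2$, $m_2=m_3=1$.

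A correct construction goes as follows. Start from an optimal configuration $(j_i',m_i')$ for $s$ and put $j_i=2^t+j_i'$. The positions below $2^t$ left free by $(j_i',m_i')$ have total mass $F=(r-1)(2^t-1)-\zcl_r(\RR P^s)$. You need the following lemma: a multiset of powers of $2$, each $<2^t$, with total $\ge\tau$ where $0<\tau\le 2^t$, contains a sub-multiset with sum in $[\tau,2^t]$. (Add elements in decreasing order; a partial sum $<\tau$ is a multiple of the next element, hence at most $2^t$ minus it.) Then:
\begin{itemize}
\item if $F\ge 2^t$, add free bits of total exactly $2^t$ to the $m_i$, which realizes $\zcl_r(\RR P^s)+r2^t$;
\item if $F<2^t$, take $m_i=2^t-1-j_i'$, so every $e_i=2^{t+1}-1$ and $\sum m_i\le s+F<n$.
\end{itemize}
Since $F\ge 2^t$ is exactly the condition $\zcl_r(\RR P^s)+r2^t\le (r-1)(2^{t+1}-1)$, this gives the lower bound.

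\textbf{Step 2.} For the same reason, the case your reduction handles, with all $r$ top bits present, never occurs. In every admissible configuration one of two things happens:
\begin{itemize}
\item some $j_i<2^t$, and then $j_i$ is bounded only by $2^t-1$, not by $s$; or
\item no $m_i$ contains $2^t$, and then $\sum m_i'$ is bounded only by $n$, not by $s$.
\end{itemize}
So the residual need not be admissible for $s$. In the example, the residual $m'=(1,1)$ has sum $2>s=0$.

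What is needed is an exchange argument. Let $A=\{i: j_i\ge 2^t\}$, and let $\delta\in\{0,1\}$ record whether some $m_i\ge 2^t$. Discard the factors outside $A$; each loses at most $2^t-1$. If $\delta=0$, delete bits from the remaining $m_i'$ to bring their sum down to at most $s$, at cost at most $2^t$ by the same lemma. What remains is admissible for $s$, so the value is at most
\[
2^t(|A|+\delta)+\zcl_r(\RR P^s)+2^t(1-\delta)+(r-1-|A|)(2^t-1)\le \zcl_r(\RR P^s)+r2^t .
\]

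With these two repairs your outline becomes a complete elementary proof. As written, the appeal to ``induction on $t$, absorbing carries into the $m$-budget'' does not supply either step.
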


In Proposition \ref{Prop_GMM as iterated fibre bundle}, we discussed how we can realize a generalized Milnor manifold as an iterated fibre bundle with each fibre a generalized Milnor manifold.
Let $\mathbb{R} H(\textbf{n})$ be a generalized Milnor manifold with $\textbf{n}=(n_1, \dots, n_{\ell})$.
Now let $\mathcal{P}_{\{1,2\}}(\ell)$ denote the set of all ordered partitions of $\ell$ whose parts belong to $\{1,2\}$, that is,
\begin{equation}\label{Eq_partition}
    \mathcal{P}_{\{1,2\}}(\ell):=\Big\{\lambda=(\lambda_1,\ldots,\lambda_k)\mid \lambda_i\in\{1,2\}\ \text{for all } i,\ \sum_{i=1}^k\lambda_i=\ell\Big\}.
\end{equation}
 Then we can express $\mathbb{R} H(\textbf{n})$ by an iterated fibre bundle as 
    \begin{equation}\label{Eq_iteration of GMM using 1 and 2}
        \mathbb{R} H(\textbf{n})=\mathbb{R} H(\textbf{n}_{\lambda_k}) \twoheadrightarrow \mathbb{R} H(\textbf{n}_{\lambda_{k-1}}) \twoheadrightarrow \cdots \twoheadrightarrow \mathbb{R} H(\textbf{n}_{\lambda_2}) \twoheadrightarrow \mathbb{R} H(\textbf{n}_{\lambda_1})\twoheadrightarrow \{*\}
    \end{equation}
using $\lambda \in \mathcal{P}_{\{1,2\}}(\ell)$. We have $\zcl_r(\mathbb{R} H(\textbf{n}_{\lambda_i}))$ from existing results in the literature. Using Proposition \ref{Prop_zcl in Leray Hirsch setup} iteratively in the tower of fibre bundles in \eqref{Eq_iteration of GMM using 1 and 2}, we get
    \begin{equation}\label{Eq_L lambda}
        \mathscr{L}_{\lambda}\big( \mathbb{R} H(\textbf{n}) \big) \coloneq \sum_{i=1}^k \zcl_r \big( \mathbb{R} H(\textbf{n}_{\lambda_i}) \big) \leq \zcl_r \big( \mathbb{R} H(\textbf{n}) \big),
    \end{equation}
a lower bound of $\TC_r(\mathbb{R} H(\textbf{n}))$. If we choose a different partition $\lambda' \in \mathcal{P}_{\{1,2\}}(\ell)$, we get another lower bound $\mathscr{L}_{\lambda'}(\mathbb{R} H(\textbf{n}))$. We conclude the above discussion with the following result.

\begin{proposition}\label{Prop_lower bound by partition}
    For any generalized real Milnor manifold $\RR H(\textbf{n})$  and $r \geq 2$,  
        $$\max \{ \mathscr{L}_{\lambda}(\mathbb{R} H(\textbf{n})) \mid \lambda \in \mathcal{P}_{\{1,2\}}(\ell)\} \;\leq \; \TC_r(\mathbb{R} H(\textbf{n}))\; \leq \; rD$$
    where $D=\dim(\mathbb{R} H(\textbf{n}))$ and $\mathcal{P}_{\{1,2\}}(\ell),\;\mathcal{L}_{\lambda}$ are defined as in \eqref{Eq_partition},\eqref{Eq_L lambda}, respectively.
\end{proposition}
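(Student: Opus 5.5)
The lower bound will come from an induction along the tower \eqref{Eq_iteration of GMM using 1 and 2}, using Proposition~\ref{Prop_zcl in Leray Hirsch setup} at each stage together with Schwarz's bound \eqref{Eq_upper and lower bound of TC by Schwarz}. The upper bound is the dimensional one.

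Fix $\lambda=(\lambda_1,\ldots,\lambda_k)\in\mathcal P_{\{1,2\}}(\ell)$. By Proposition~\ref{Prop_GMM as iterated fibre bundle}, each map in the tower \eqref{Eq_iteration of GMM using 1 and 2} is a fibre bundle. Its base is the previous stage and its fibre is a generalized Milnor manifold with one or two entries, namely a real projective space or a classical real Milnor manifold; write $\mathbb RH(\mathbf n_{\lambda_i})$ for this fibre, as in \eqref{Eq_L lambda}.

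The first task is to verify the hypothesis of Proposition~\ref{Prop_zcl in Leray Hirsch setup} for each stage, with $\mathbb K=\mathbb Z_2$: the restriction $H^*(\text{total space};\mathbb Z_2)\to H^*(\text{fibre};\mathbb Z_2)$ must be surjective. By Theorem~\ref{cohom of GMM}, the cohomology of the fibre is generated by the classes $w_1$ of its tautological line bundles. These line bundles are the restrictions of the global line bundles $\xi_j$ on the total space, for the indices $j$ in the block $\lambda_i$. Naturality of Stiefel--Whitney classes then shows that the restriction hits every generator, so it is surjective. The fibre is a compact manifold, so its cohomology is finite-dimensional.

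With the hypothesis in hand, apply Proposition~\ref{Prop_zcl in Leray Hirsch setup} to the $i$-th stage to get
\[
\zcl_r(\text{stage } i-1)+\zcl_r(\mathbb RH(\mathbf n_{\lambda_i}))\le \zcl_r(\text{stage } i).
\]
Inducting on $i$, starting from the point, gives $\mathscr L_\lambda(\mathbb RH(\mathbf n))\le \zcl_r(\mathbb RH(\mathbf n))$. Schwarz's inequality $\zcl_r\le \TC_r$ from \eqref{Eq_upper and lower bound of TC by Schwarz} then turns this into a lower bound for $\TC_r$. Since $\lambda$ was arbitrary, we may take the maximum over $\mathcal P_{\{1,2\}}(\ell)$.

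For the upper bound, Proposition~\ref{Prop_cat of GMM} gives $\TC_r\le\cat(\mathbb RH(\mathbf n)^r)=rD$. Alternatively, one can use the dimensional bound in \eqref{Eq_upper and lower bound of TC by Schwarz} with $\conn=0$.

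The main obstacle is the identification of the fibre. At each stage the fibre must be a genuine one- or two-term GMM, with the shifted indices given in Proposition~\ref{Prop_GMM as iterated fibre bundle}. This fibre must also be the one to which the literature values of $\zcl_r$ apply. The surjectivity check requires some care with these shifted indices, but the tautological-bundle argument above handles it.
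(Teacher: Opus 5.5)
Your proposal is correct and follows the paper's argument. The paper likewise iterates Proposition~\ref{Prop_zcl in Leray Hirsch setup} along the tower \eqref{Eq_iteration of GMM using 1 and 2} to get $\mathscr L_\lambda\le\zcl_r\le\TC_r$, and takes the upper bound from Proposition~\ref{Prop_cat of GMM}. Your explicit surjectivity check, that the fibre's tautological line bundles are restrictions of the global $\xi_j$, fills in a hypothesis the paper leaves implicit. You also rightly read $\mathbb RH(\mathbf n_{\lambda_i})$ in \eqref{Eq_L lambda} as the fibres rather than the stages of the tower.
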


\begin{corollary}\label{Cor_tc of real gmm}
    Let $\RR H (\textbf{n})$ be a real generalized milnor manifold with $n_i=2^{t_i}+i-1$ where $t_i \geq 0$ for $1 \leq i \leq \ell$. Then for $r=2$, we have
    \begin{equation}
        2D- \ell \leq \TC (\RR H(\textbf{n})) \leq 2D,
    \end{equation}
    and for $r \geq 3$, we get
        $$\TC_r(\RR H(\textbf{n}))=rD,$$
    where $D$ is the dimension of $\RR H(\textbf{n})$.
\end{corollary}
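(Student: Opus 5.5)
The plan is to apply Proposition~\ref{Prop_lower bound by partition} with the partition $\lambda=(1,1,\ldots,1)\in\mathcal P_{\{1,2\}}(\ell)$. This presents $\RR H(\textbf{n})$ as the iterated tower of Construction~\ref{construction of GMM fibre bundle}, whose successive fibres are real projective spaces. Concretely, the fibre of $B_i\to B_{i-1}$ is $\RR P^{n_i-i+1}$. Under the hypothesis $n_i=2^{t_i}+i-1$, each fibre is therefore $\RR P^{2^{t_i}}$, a projective space of dimension exactly a power of two.

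The Leray--Hirsch hypothesis needed by Proposition~\ref{Prop_zcl in Leray Hirsch setup} holds at every stage with $\mathbb K=\mathbb Z_2$. Indeed, $x_i=w_1(\xi_i)$ restricts to the generator of $H^*(\RR P^{n_i-i+1};\mathbb Z_2)$, so fibre inclusions are surjective in cohomology. Iterating the proposition up the tower gives
\[
\zcl_r(\RR H(\textbf{n}))\;\ge\;\sum_{i=1}^{\ell}\zcl_r(\RR P^{2^{t_i}}).
\]

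Next I evaluate these terms with Theorem~\ref{Thm_Davis}, taking $n=2^t$, so $s=0$ and the inner term is $\zcl_r(\RR P^0)=0$. This gives
\[
\zcl_r(\RR P^{2^t})=\min\{r2^t,(r-1)(2^{t+1}-1)\}.
\]
For $r=2$ we have $(r-1)(2^{t+1}-1)=2\cdot 2^t-1<2\cdot 2^t$, so $\zcl_2(\RR P^{2^t})=2\cdot 2^t-1$. For $r\ge 3$ we have $(r-1)(2^{t+1}-1)-r2^t=(r-2)2^t-(r-1)$. This is $\ge 0$ whenever $t\ge 1$. For $t=0$ the space is $\RR P^1=S^1$, whose zcl$_r$ is $r-1$ rather than $r$, so this edge case must be checked against Davis' formula directly. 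Davis' formula gives $\min\{r,\ r-1\}=r-1$ there, which breaks the claimed equality for $t_i=0$.

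The $t=0$ case is the main obstacle. For $t\ge1$ and $r\ge 3$, each summand equals $r2^{t_i}$. Summing over $i$ yields $rD$, because $D=\sum_i(n_i-i+1)=\sum_i 2^{t_i}$ by \eqref{Eq_dimension of GMM} in the real case. Combined with the upper bound $rD$ of Proposition~\ref{Prop_cat of GMM}, this gives $\TC_r=rD$. I would first verify the $t_i=0$ case by other means, for instance by checking whether a sharper zero-divisor product exists using the twisting between stages. If that fails, I would note that the statement effectively needs $t_i\ge1$ for $r\ge3$. For $r=2$, summing $2\cdot 2^{t_i}-1$ gives exactly $2D-\ell$, which is valid for all $t_i\ge 0$ since $\zcl_2(S^1)=1$. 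Together with the upper bound $2D$ from Proposition~\ref{Prop_cat of GMM}, this gives $2D-\ell\le\TC(\RR H(\textbf{n}))\le 2D$.
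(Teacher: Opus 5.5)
You follow the paper's own route. You use the tower of Construction~\ref{construction of GMM fibre bundle} with fibres $\RR P^{n_i-i+1}=\RR P^{2^{t_i}}$, apply Proposition~\ref{Prop_zcl in Leray Hirsch setup} stage by stage, evaluate the fibres with Theorem~\ref{Thm_Davis}, and take the upper bound $rD$ from Proposition~\ref{Prop_cat of GMM}. Your $r=2$ argument is correct, and it holds for all $t_i\ge 0$ since $\zcl_2(\RR P^1)=1$. Your $r\ge 3$ argument is correct when every $t_i\ge 1$, as is $D=\sum_i 2^{t_i}$.

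Your concern about $t_i=0$ is justified, and it exposes an error in the paper rather than in your proposal. The paper's proof asserts $\zcl_r(\RR P^{2^{t_i}})=r2^{t_i}$ for all $t_i\ge 0$, whereas Davis' formula gives $\min\{r,r-1\}=r-1$ at $t_i=0$.

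You do not need to look for a sharper zero-divisor product using the twisting, because the $r\ge 3$ claim is false in the smallest case. Take $\ell=1$, $t_1=0$: then $\RR H(1)=\RR P^1=S^1$ and $D=1$, but $\TC_r(S^1)=r-1\ne r$. One way to see this is that $S^1$ is a topological group, so $\TC_r(S^1)=\cat\big((S^1)^{r-1}\big)=r-1$; this is also Rudyak's value for odd spheres.

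What the method actually proves for $r\ge 3$ is $rD-z\le \TC_r(\RR H(\mathbf n))\le rD$, where $z=\#\{i: t_i=0\}$. So the equality $\TC_r(\RR H(\mathbf n))=rD$ should carry the extra hypothesis $t_i\ge 1$ for all $i$. For $\ell\ge 2$ with some $t_i=0$, the equality is simply left open by this argument.
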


\begin{proof}
    The right inequality follows from Proposition \ref{Prop_cat of GMM}. Also notice that under our assumption, the fibre at every step is $\RR P^{2^{t_i}}$ for $i=1, \dots, \ell$. The lower bounds of $\TC$ is given by \eqref{Eq_Davis} as
        $$\zcl_2(\RR P^{2^{t_i}})=2^{t_i+1}-1 \quad \text{and} \quad \zcl_r(\RR P^{2^{t_i}})=r2^{t_i}.$$
    Then we can conclude the result using Proposition \ref{Prop_zcl in Leray Hirsch setup} and \ref{Prop_lower bound by partition}.
\end{proof}

Moreover, Propositions \ref{Prop_zcl in Leray Hirsch setup} and \ref{Prop_lower bound by partition} can be used, together with Theorem \ref{Thm_Davis}, to derive lower bounds for $\TC_r$ of any real generalized Milnor manifold. In general, this technique is better suited for the total spaces $E$ with base $B$ and fibre $F$ having $\zcl_r(B)$ and $\zcl_r(F)$ close to their dimensions $\dim(B)$ and $\dim(F)$ respectively.
Alternatively, constructing an explicit nonzero product of zero-divisors in the cohomology algebra may provide a better estimation of the zero-divisor-cup-length and eventually a better lower bound for topological complexity in certain scenarios. We illustrate this alternative approach with an example in which we calculate the lower bound of $TC_2$ of some real GMM.

\begin{example}
    In Corollary \ref{Cor_tc of real gmm}, we choose the GMM such a way that the fibres are $\RR P^{k}$ where $k$ is a power of two. 
    We consider $n_2=2^{t_2}$ instead of $2^{t_2}+1$ and $n_i=2^{t_i}+i-1$ for $i=1,3, \dots, \ell$.
    Recall from Remark \ref{remark heights of generators} that $x_i^{n_i} \neq 0$, $x_i^{n_i+1}=0$. For $i=1,\dots, \ell$, the classes $\bar{x}_i \coloneq x_{i_1} + x_{i_2} \in \ker \Delta_2^*$
    are zero divisors where $x_{i_1}=x_i \otimes 1$ and $x_{i_2}=1 \otimes x_i$.
    We now consider the product of zero divisors
    \begin{equation}\label{Eq_prod for zcl2}
       \bar{x}_2^{2n_2-2} \prod_{\substack{{i=1}\\{i \neq 2}}}^{\ell} \bar{x}_i^{2n_i-(2i-1)} \quad \in H^* \big( \RR H (\textbf{n})^2;\mathbb Z_2 \big)
    \end{equation}    
    and show that it is nonzero.
   Note that for each $i=1,3,\dots, \ell$, the product $\bar{x}_i^{2n_i-(2i-1)}=(x_{i_1}+x_{i_2})^{2n_i-(2i-1)}$ contains the term $x_{i_1}^{n_i-i+1} x_{i_2}^{n_i-i}$ as $2k-1 \choose k$ is odd for all $k=2^t \in \mathbb{N}$, by Lucas Theorem.
    Also $\bar{x}_2^{2n_2-2}$ contains a non-vanishing term $x_{2_1}^{n_2-2} x_{2_2}^{n_2}$ since $2n_2-2 \choose n_2$ is odd, again by Lucas theorem.
    Thus, the product in \eqref{Eq_prod for zcl2} contains the term $\prod_{i=1}^{\ell} x_{i_1}^{n_i-i+1} x_{i_2}^{n_i-i}$ only once and no other term cancels this. 
    Here we used the fact that $x_1^{n_1} x_2^{n_2-1}=x_1^{n_1-1}x_2^{n_2}$. Thus $$2D -\ell +1 \leq \TC (\RR H(\textbf{n})) \leq 2D,$$ where $D$ is the dimension of GMM.
\end{example}

\begin{remark}
Corollary \ref{Cor_tc of real gmm} improves the lower bounds for the topological complexity of classical real Milnor manifolds obtained in \cite[Theorem 2.9(2)]{DS25}. More significantly, Proposition \ref{Prop_zcl in Leray Hirsch setup} provides a general and flexible tool for estimating the zero-divisor-cup-length of total spaces arising in the Leray--Hirsch setting. In particular, it allows one to deduce lower bounds for the higher topological complexities of iterated Grassmannian bundles associated to vector bundles directly from the corresponding invariants of their base spaces and fibres.

Since many important classes of manifolds, including partial flag manifolds and generalized Bott manifolds admit descriptions as towers of Grassmannian bundles associated to vector bundles (see \cite{CMS10},\cite{KL03}), Proposition \ref{Prop_zcl in Leray Hirsch setup} furnishes a unified framework for studying the zero-divisor-cup-length and topological complexity of such spaces. \qed
\end{remark}

\subsection*{Acknowledgements}
We would like to thank Prof. A. S. Thakur for introducing us to Milnor manifolds and for insightful conversations. We are grateful to Prof. P. Sankaran for sharing his insights on holomorphic vector bundles over Milnor manifolds. Finally, we thank IIT Kanpur for its postdoctoral fellowship.

\bibliographystyle{amsalpha}
\bibliography{TC-ref}

\end{document}